\title[The topological period--index problem over $6$-complexes]{The topological period--index
problem over $6$-complexes}
\author[Benjamin~Antieau]{Benjamin Antieau$^{*}$}
\address{University of Washington, Department of Mathematics, Box 354350, Seattle, WA 98195}
\email{benjamin.antieau@gmail.com}
\thanks{$^{*}$The author was supported in part by the NSF under Grant RTG DMS 0838697}
\author[Ben~Williams]{Ben Williams}
\address{University of British Columbia, Department of Mathematics, 1984 Mathematics Road
Vancouver, B.C., Canada V6T 1Z2}
\email{tbwillia@usc.edu}
\keywords{Brauer groups, twisted $K$-theory, period--index problems.}
\subjclass[2010]{Primary 14F22, 19L50. Secondary 55R40}
\newcommand{\myauthor}{Benjamin Antieau and Ben Williams}
\newcommand{\mytitle}{The topological period-index problem over $6$-complexes}
\definecolor{todo}{rgb}{1,0,0}
\definecolor{conditional}{rgb}{0,1,0}
\definecolor{e-mail}{rgb}{0,.40,.80}
\definecolor{reference}{rgb}{.20,.60,.22}
\definecolor{mrnumber}{rgb}{.80,.40,0}
\definecolor{citation}{rgb}{0,.40,.80}
\DeclareMathAlphabet{\mathpzc}{OT1}{pzc}{m}{it}
\DeclareMathOperator{\Cl}{Cl}
\newcommand{\CCl}{\CC\mathrm{l}}
\newcommand{\BSO}{\mathrm{BSO}}
\newcommand{\Brt}{\Br_{\topo}}
\newcommand{\tot}{\mathrm{tot}}
\newcommand{\indt}{\ind_{\topo}}
\newcommand{\pert}{\per_{\topo}}
\newcommand{\PGL}{\mathrm{PGL}}
\newcommand{\Pc}{\mathrm{P}^c}
\newcommand{\Spin}{\mathrm{Spin}}
\newcommand{\PU}{\mathrm{PU}}
\newcommand{\SL}{\mathrm{SL}}
\newcommand{\GL}{\mathrm{GL}}
\newcommand{\tors}{{\mathrm{tors}}}
\DeclareMathOperator{\ord}{ord}
\DeclareMathOperator{\Spec}{Spec}
\DeclareMathOperator{\Pic}{Pic}
\DeclareMathOperator{\Hoh}{H}
\DeclareMathOperator{\Eoh}{E}
\DeclareMathOperator{\Br}{Br}
\DeclareMathOperator{\im}{im}
\DeclareMathOperator{\ind}{ind}
\DeclareMathOperator{\per}{per}
\DeclareMathOperator{\sk}{sk}
\DeclareMathOperator{\Pin}{Pin}
\newcommand{\topo}{{\mathrm{top}}}
\newcommand{\et}{\mathrm{\acute{e}t}}
\newcommand{\K}{\mathbf{K}}
\DeclareMathOperator{\KU}{KU}
\newcommand{\G}{\mathbf{G}}
\DeclareMathOperator{\Mat}{Mat}
\renewcommand{\P}{\mathrm{P}}
\newcommand{\B}{\mathrm{B}}
\newcommand{\iso}{\cong}
\newcommand{\CC}{\mathds{C}}
\newcommand{\RR}{\mathds{R}}
\newcommand{\ZZ}{\mathds{Z}}
\newcommand{\PP}{\mathds{P}}
\newcommand{\Gm}{\mathds{G}_{m}}
\let\oldmarginpar\marginpar
\renewcommand\marginpar[1]{\-\oldmarginpar[\raggedleft\footnotesize #1]%
{\raggedright\footnotesize #1}}
\newcommand{\U}{\mathrm{U}}
\newcommand{\tensor}{\otimes}
\newcommand{\Brm}{\mathrm{B}}
\newcommand{\Erm}{\mathrm{E}}
\newcommand{\BPU}{{\Brm\PU}}
\newcommand{\BU}{\Brm \U}
\newcommand{\SU}{\mathrm{SU}}
\newcommand{\SO}{\mathrm{SO}}
\newcommand{\EU}{\mathrm{EU}}
\theoremstyle{plain}
\newtheorem{theorem}{Theorem}[section]
\newtheorem{lemma}[theorem]{Lemma}
\newtheorem{proposition}[theorem]{Proposition}
\newtheorem{conjecture}[theorem]{Conjecture}
\newtheoremstyle{named}{}{}{\itshape}{}{\bfseries}{.}{.5em}{#1 \thmnote{#3}}
\theoremstyle{named}
\newtheorem{namedtheorem}{Theorem}
\newtheoremstyle{strawman}{}{}{\itshape}{}{\bfseries}{.}{.5em}{#1}
\theoremstyle{strawman}
\newtheorem*{strawman}{Straw Man}
\theoremstyle{definition}
\newtheorem{definition}[theorem]{Definition}
\newtheorem{question}[theorem]{Question}
\theoremstyle{remark}
\begin{document}

\begin{abstract}
  By comparing the Postnikov towers of the classifying spaces of projective unitary groups and the differentials in a twisted
  Atiyah--Hirzebruch spectral sequence, we deduce a lower bound on the topological index in
  terms of the period, and solve the topological
  version of the period--index problem in full for finite CW complexes of dimension at most $6$.  Conditions are established that, if they
  were met in the cohomology of a smooth complex $3$-fold variety, would disprove the ordinary period--index conjecture. Examples of
  higher-dimensional varieties meeting these conditions are provided. We use our results to furnish an obstruction to realizing a period-$2$
  Brauer class as the class associated to a sheaf of Clifford algebras, and varieties are constructed for which the total Clifford invariant
  map is not surjective. No such examples were previously known.  % \todo{Update after the appendices have been finalized:} In an appendix, we
  % give general bounds for the topological period--index conjecture; in contrast to our results in low dimensions, these are not expected to
  % be tight.
\end{abstract}

\maketitle

\section{Introduction and Summary of Results}

This paper is one of a sequence, \cites{aw1,aw2,aw4}, in which classical homotopy theory is used to derive statements about and derive intuition for the study of
division algebras, quadratic forms and the Brauer group of rings and schemes. We begin with an overview of the context of the paper, the
reader is referred to~\cite{aw1} for further details.

The cohomological Brauer group of a topological space $X$ is
$\Br'_{\topo}(X)=\Hoh^3(X,\ZZ)_{\tors}$. Given a principal bundle $P\rightarrow
X$ for $\PU_n$ there is a class $\delta_n(P) \in\Br'_{\topo}(X)$ associated to $P$ that is
the obstruction to lifting $P$ to a principal bundle for $\U_n$;
these classes arise in the exact sequence of cohomology associated to the extension \[1\rightarrow S^1\rightarrow\U_n\rightarrow\PU_n\rightarrow 1.\] For $\alpha \in\Br'_{\topo}(X)$,
the period of $\alpha$, written $\per_{\topo}(\alpha)$, is the order of $\alpha$ in the cohomological Brauer group, so that $\delta_n(P)$ is the first
measurement of the nontriviality of $P$. The index $\ind_{\topo}(\alpha)$ is an invariant of $\alpha$, being the greatest common divisor of all integers $n$ such that $\alpha=\delta_n(P)$ for some
$P$. If there is no such $P$, then $\indt(\alpha) = \infty$ by convention; we define the \textit{topological Brauer group} to be the
subset $\Brt(X) \subset \Brt'(X)$ consisting of classes $\alpha$ such that $\indt(\alpha) < \infty$. 

When $X$ is a finite CW complex the topological Brauer group is well-behaved. For instance, Serre
showed~\cite{grothendieck-brauer} that $\Br_{\topo}(X)$ and $\Br'_{\topo}(X)$ agree, and the authors were able to show that the period and
the index have the same prime divisors~\cite{aw1}*{Theorem 3.1}. In general, $\Brt(X)$ is a group, and $\pert(\alpha) | \indt(\alpha)$ for
$\alpha \in \Brt(X)$.

The definitions above for topological spaces were inspired by analogous ideas for schemes. In the case of a scheme the cohomological Brauer
group $\Br'(X)$ is $\Hoh^2_{\et}(X,\Gm)_{\tors}$. The classes $\alpha\in\Br'(X)$ appear as
obstructions to lifting principal bundles for $\PGL_n$ to
principal bundles for $\GL_n$ in the \'etale topology, and one can define $\per(\alpha)$ and $\ind(\alpha)$ in the same way as in topology. If $X$ is a
complex variety, the chief case considered in this paper, then there is a surjection $\Br'(X)\rightarrow\Br'_{\topo}(X)$. If
$\alpha\in\Br'(X)$, then we write $\bar{\alpha}$ for the image of $\alpha$ in $\Br'_{\topo}(X)$. One has
\begin{gather*}
    \per_{\topo}(\bar{\alpha})|\per(\alpha),\\
    \ind_{\topo}(\bar{\alpha})|\ind(\alpha)
\end{gather*}
in general.

The period--index problem is the problem of relating the period and index, especially by finding upper bounds on the index in terms of the period and the
dimension of the underlying object $X$. If $X$ is an irreducible scheme over a field $k$ we
write $k(X)$ for the function field of $X$. The following is one of the major outstanding conjectures in the study of division algebras.

\begin{conjecture}[Period--Index Conjecture for Function Fields]\label{conj:perind}
    Let $X$ be a $d$-dimensional irreducible variety over an algebraically closed field $k$.
    For any $\alpha\in\Br(k(X))$,
    \begin{equation*}
        \ind(\alpha)|\per(\alpha)^{d-1}.
    \end{equation*}
\end{conjecture}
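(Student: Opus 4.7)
Conjecture~\ref{conj:perind} is a major open problem, so any plan must be frankly speculative; I outline the attack natural to the framework of this paper. First I would reduce from a function-field class to a class on a smooth projective model: given $\alpha\in\Br(k(X))$, purity for the Brauer group applied to a resolution of singularities produces $\alpha'\in\Br(X')$ for some smooth projective $d$-fold $X'$ over $k$ extending $\alpha$, with $\per(\alpha')=\per(\alpha)$ and $\ind(\alpha')\geq\ind(\alpha)$. It therefore suffices to bound $\ind(\alpha')$ by $\per(\alpha')^{d-1}$ on the model.

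Specializing to $k=\CC$, I would then try to route the bound through topology, since $\indt(\bar\alpha')\mid\ind(\alpha')$ and $\pert(\bar\alpha')\mid\per(\alpha')$. The plan is to prove the topological analog $\indt(\bar\alpha')\mid\pert(\bar\alpha')^{d-1}$ for a $2d$-dimensional CW complex by the same style of argument the paper uses for $d=3$, namely by analyzing the Postnikov tower of $\BPU_n$ and the differentials in the twisted Atiyah--Hirzebruch spectral sequence to produce, when $n=\pert(\bar\alpha')^{d-1}$, a $\PU_n$-bundle lifting $\bar\alpha'$. Assuming such a clean topological bound were available, the second half of the plan would be a lifting step: promote the topological $\PU_n$-bundle to an algebraic Azumaya algebra of degree $n$, either by a Hodge-theoretic obstruction computation or via moduli of twisted sheaves in the manner of Lieblich.

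The fundamental obstacle is exactly the one that motivates this paper: the putative topological bound $\indt\mid\pert^{d-1}$ is \emph{not} known to hold in general, and the paper in fact isolates conditions under which it would \emph{fail} on a smooth complex $3$-fold. If such a $3$-fold existed, the divisibility $\indt(\bar\alpha)\mid\ind(\alpha)$ together with $\pert(\bar\alpha)\mid\per(\alpha)$ would produce a counterexample to Conjecture~\ref{conj:perind}, not a proof. Consequently the topological route cannot succeed in isolation; one would need extra algebraic constraints -- presumably Hodge-theoretic in nature, reflecting that algebraic Brauer classes live in a proper subgroup of $\Hoh^3(X,\ZZ)_{\tors}$ -- that force $\ind(\alpha')$ strictly below $\indt(\bar\alpha')$ for precisely those classes that defeat the topological bound. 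Identifying such constraints, and proving they always suffice, is where I would expect the argument to founder, and indeed the more likely outcome of pushing the present paper's methods further is a disproof of the conjecture rather than a proof.
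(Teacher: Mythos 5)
This statement is a \emph{conjecture}, not a theorem of the paper: the paper explicitly records that it is a major open problem, proved only for $d=2$ by de~Jong, and unknown even for $\PP^3$. There is therefore no proof in the paper for you to match, and your proposal correctly recognizes that it is offering a speculative plan rather than an argument.

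Your meta-analysis of the plan is accurate and aligned with the paper's own results, but it is worth being sharper on one point. The ``clean topological bound'' $\indt(\bar\alpha)\mid\pert(\bar\alpha)^{d-1}$ you would need in step two is precisely the paper's Straw Man, and Theorem~B does not merely leave it open --- it \emph{disproves} it: for every $n$ there is a $6$-dimensional finite CW complex with $\pert(\alpha)=n$ and $\indt(\alpha)=n\cdot\gcd(2,n)\cdot n$, so already at $n=2$ one has $\indt=8>4=\pert^{2}$. So your step two is known to fail, not merely unestablished. Where the question genuinely remains open is whether the cohomological configuration producing $\indt=8$ at period $2$ can be realized on a smooth projective complex $3$-fold; that is exactly the content of Theorem~C, and is where, as you say, Hodge-theoretic constraints on the topology of smooth projective varieties would have to intervene to save the conjecture. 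Your closing assessment --- that pressing the paper's methods is more likely to yield a disproof than a proof, and that a genuine proof would require inputs well beyond the topological framework --- matches the paper's own perspective.

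One smaller caveat: your opening reduction invokes purity to spread a class in $\Br(k(X))$ out to a smooth projective model. That step only produces a class on $X'$ when $\alpha$ is unramified; for a general $\alpha\in\Br(k(X))$ the reduction to the unramified case is itself a nontrivial theorem (the paper cites de~Jong--Starr for exactly this), and you should flag it as such rather than as an automatic consequence of resolution of singularities.
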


Resolution of the conjecture would have important consequences for rational points of homogeneous spaces, moduli spaces of vector bundles,
quadratic forms, and division algebras.  Examples exist, see for instance~\cite{colliot}, where $\ind(\alpha) = \per(\alpha)^{d-1}$, so the
bound is sharp, if it holds. The conjecture has been proved in the case of $d=2$ by de Jong in~\cite{dejong}, but is not known for any
higher-dimensional varieties, not even for $\PP^3$.

A theorem of de Jong and Starr in~\cite{dejong-starr} implies that the period--index conjecture for $k(X)$ may be settled by considering only
classes lying in a subgroup called the unramified Brauer group; if $X$ is smooth and projective, then the Brauer group of $X$ coincides with
the unramified Brauer group of $k(X)$. This result recasts the period--index question, ostensibly a local and arithmetic question about
fields, as a geometric problem in the theory of smooth projective varieties.

In~\cite{aw1}, we formulated the period--index problem for topological spaces in the hope that the intuition gained in this setting might be
useful for the study of the cognate problem in algebraic geometry. We were able to deduce the existence, for any fixed $n$, of an integer $e(d,n)$ such that
\begin{equation*}
    \ind_{\topo}(\alpha)|\per_{\topo}(\alpha)^{e(d,n)}
\end{equation*}
for all classes $\alpha\in\Brt(X)$ whenever $X$ is a finite CW complex of dimension $2d$ and $\per_{\topo}(\alpha)=n$. An anologous result
for function fields and more generally $C_r$--fields was proved recently by Matzri~\cite{matzri}. When $n=2$, an earlier unpublished result
of Krashen established some similar bounds. We found that for $n$ having no prime divisors less than or equal to $2d+1$, one may take $e(d,n)=d$,
one greater than the exponent predicted by the algebraic period--index conjecture under the analogy between $d$-dimensional smooth
projective algebraic varieties and $2d$-dimensional CW complexes.

The result of de Jong and Starr drives us to investigate the indices of classes in the Brauer groups of
projective $d$-folds over the complex numbers.  Topologically these $d$-folds have the homotopy type of real $2d$-dimensional closed manifolds, and
\textit{a fortiori} of finite CW complexes. 
Our investigations began with this in mind, as well as the following na\"ive hypothesis:

\begin{strawman} If $X$ is a $2d$-dimensional finite CW complex, and $\alpha\in\Brt(X)$, then
    \begin{equation*}
        \ind_{\topo}(\alpha)|\per_{\topo}(\alpha)^{d-1}.
    \end{equation*}
\end{strawman}

In order to state the main computational theorem, we first must set up notation for the unreduced Bockstein map 
\[\beta_n : \Hoh^i(X, \ZZ/n) \to \Hoh^{i+1}(X, \ZZ)\]
and for a cohomology operation, a \textit{Pontryagin square}, 
\[ P_2: \Hoh^2( X, \ZZ/(2m)) \to \Hoh^4(X, \ZZ/(4m)) \]
with the property that $2P_2(\xi)$ is the image of $\xi^2$ in $\Hoh^2(X, \ZZ/(4m))$.

Our results depend only on the homotopy types of the CW complexes involved, so that a result asserted to hold for a CW complex of dimension
$d$ will hold for all homotopy-equivalent CW complexes, possibly of higher or infinite dimension.

\begin{namedtheorem}[A]
  Let $X$ be a connected finite CW complex, let $\alpha \in \Hoh^3(X,\ZZ)_\tors = \Brt(X)$
  be a Brauer class, and write $n = \pert(\alpha)$.
  Choose $\xi \in \Hoh^2(X,\ZZ/n)$ such that $\beta_n(\xi) = \alpha$, let $Q(\xi) $ denote either $\beta_n(\xi^2)$ if $n$ is odd or
  $\beta_{2n}(P_2(\xi))$ if $n$ is even, and let $\tilde Q(\xi)$ denote the reduction of $Q(\xi)$ to $\Hoh^5(X, \ZZ)/ ( \alpha \smile
  \Hoh^2(X, \ZZ))$. Then:
  \begin{enumerate}
  \item The class $\tilde Q (\xi)$ depends only on $\alpha$. We write $\tilde Q(\alpha)$ for
      this class.
  \item The order of $\tilde Q(\xi)$ divides $\pert(\alpha)$ if $\pert(\alpha)$ is odd and divides $2 \pert(\alpha)$ if $\pert(\alpha)$
    is even.
  \item For any $\alpha$ and $\xi$,
    \begin{equation*}
      \pert(\alpha) \ord(\tilde Q(\xi)) | \indt(\alpha),
    \end{equation*}
    with equality if the dimension of $X$ is at most $6$.
  \end{enumerate}    
\end{namedtheorem}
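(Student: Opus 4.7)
The proof splits into cohomological content (parts (1) and (2)) and homotopy-theoretic content (part (3)). For part (1), two lifts $\xi, \xi' \in \Hoh^2(X, \ZZ/n)$ of $\alpha$ differ by a class $\eta$ in $\ker(\beta_n)$, which is the image of $\Hoh^2(X, \ZZ) \to \Hoh^2(X, \ZZ/n)$; say $\eta$ is the reduction of $\tilde\eta \in \Hoh^2(X, \ZZ)$. Expanding $Q(\xi + \eta) - Q(\xi)$ via the derivation property of the Bockstein in the odd case (respectively via the quadratic formula $P_2(\xi + \eta) = P_2(\xi) + P_2(\eta) + i_*(\xi \smile \eta)$ in the even case), and then using $\beta_n(\eta) = 0$ to kill terms involving only $\eta$, one sees that the difference is represented by a multiple of $\alpha \smile \tilde\eta$, hence vanishes modulo $\alpha \smile \Hoh^2(X, \ZZ)$. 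Part (2) is immediate: $Q(\xi)$ lies in the image of $\beta_n$ (odd $n$) or $\beta_{2n}$ (even $n$), which lands in $n$-torsion and $2n$-torsion respectively.

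For part (3), the plan is to analyze the twisted Atiyah--Hirzebruch spectral sequence
\begin{equation*}
    E_2^{p,q} = \Hoh^p(X, \KU^q(\pt)) \Longrightarrow \KU^{p+q}_\alpha(X),
\end{equation*}
together with the characterization of $\indt(\alpha)$ as the least positive rank appearing in $\KU^0_\alpha(X)$; for $X$ of dimension at most $6$ any such rank can moreover be realized by a genuine twisted vector bundle, producing equality, while in general one still obtains the divisibility statement. The differential $d_3^{0,0}\colon \ZZ \to \Hoh^3(X, \ZZ)$ is cup product with $\alpha$, so $E_5^{0,0} = \pert(\alpha)\ZZ$. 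The differential $d_5^{0,0}$ lands in $E_5^{5,-4} = \Hoh^5(X, \ZZ)/(\alpha \smile \Hoh^2(X, \ZZ))$; the crucial step is the identification that, on the generator $\pert(\alpha)$, it equals $\pm \tilde Q(\alpha)$. Granting this, $\ker(d_5^{0,0}) = \pert(\alpha)\ord(\tilde Q(\alpha))\ZZ$, and for $\dim X \leq 6$ every further differential out of bidegree $(0,0)$ has zero target, so $E_\infty^{0,0} = \pert(\alpha)\ord(\tilde Q(\alpha))\ZZ = \indt(\alpha)\ZZ$.

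The main obstacle is the identification of $d_5^{0,0}$ with $\tilde Q$. I would establish this via a Postnikov-tower comparison: the first nontrivial stratum of the Postnikov tower of $\BPU_n$ is a fibration $K(\ZZ, 4) \to \tau_{\leq 4}\BPU_n \to K(\ZZ/n, 2)$, whose $k$-invariant lies in $\Hoh^5(K(\ZZ/n, 2), \ZZ)$. A direct computation of this cohomology group, combined with a rigidity argument on cohomology operations $\Hoh^2(-, \ZZ/n) \to \Hoh^5(-, \ZZ)$, forces the $k$-invariant to be a unit multiple of $\beta_n(\iota^2)$ for odd $n$ and of $\beta_{2n}(P_2(\iota))$ for even $n$. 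A naturality comparison between the Postnikov tower of $\BPU_n$ and that of the twisted $\KU$-module then transports this $k$-invariant to the $d_5$-differential of the twisted AHSS. The even case requires special care, both because $P_2$ is not a mod-$n$ operation and because the order bound in part (2) picks up a factor of $2$.
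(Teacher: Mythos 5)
Your decomposition into (1)+(2) cohomological and (3) spectral-sequence content matches the paper's, and the supporting pillars you invoke — $\indt(\alpha)$ as the positive generator of $\Eoh_\infty^{0,0}$, $d_3^{0,0} = (\cdot)\smile\alpha$, and the identification of the first $k$-invariant of $\BPU_n$ with a generator of $\Hoh^5(K(\ZZ/n,2),\ZZ)$ — are precisely the ones the paper uses. Two points deserve comment.

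For part (1) your argument is genuinely different from the paper's. You expand $Q(\xi+\eta)-Q(\xi)$ directly, using the derivation property of $\beta_n$ for odd $n$ and the quadratic identity $P_2(\xi+\eta) = P_2(\xi)+P_2(\eta)+i_*(\xi\smile\eta)$ for even $n$, and then observe that each residual term is a multiple of $\alpha\smile\tilde\eta$. This is sound (and for even $n$ one should note that the abstractly-defined $P_2$ of the paper is forced to coincide with the classical Pontryagin square since $\Hoh^4(K(\ZZ/n,2),\ZZ)=0$ makes $\beta_{2n}$ injective on $\Hoh^4(K(\ZZ/n,2),\ZZ/2n)$, so the quadratic identity is genuinely available). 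The paper instead reduces to the universal example $Y = K(\ZZ/n,2)\times K(\ZZ,2)$ and uses the K\"unneth formula together with the splitting $\phi\colon K(\ZZ/n,2)\to Y$. Your route is more computational; the paper's is more conceptual but needs the K\"unneth bookkeeping. Both are fine.

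In part (3) the step "a naturality comparison between the Postnikov tower of $\BPU_n$ and that of the twisted $\KU$-module then transports this $k$-invariant to the $d_5$-differential of the twisted AHSS" is where you would get stuck: there is no off-the-shelf comparison of this type, and it is exactly the gap the paper spends Sections 4--5 filling. What the paper actually does is (a) prove, by a direct obstruction-theoretic lifting argument in the Postnikov tower of $\BPU_{nr}$, that for $X$ of dimension at most $5$ with $\Hoh^2(X,\ZZ)=0$ one has $\indt(\alpha)=\pert(\alpha)\ord(Q(\xi))$ (Proposition~\ref{p:loweasy}), with no reference to the AHSS; and then (b) prove Theorem~\ref{thm:relations} by mapping the twisted AHSS of $X$ to the twisted AHSS of $K(\ZZ/n,2)$ along the classifying map $\xi$, comparing the $d_5$-differentials, and concluding only that $\langle G(\alpha)\rangle = \langle\tilde Q(\xi)\rangle$ (up to the scaling $n/\pert(\alpha)$). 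Note this is weaker than your assertion that $d_5^{0,0}(\pert(\alpha)) = \pm\tilde Q(\alpha)$; the equality of cyclic subgroups is all that the paper establishes, and all that is needed. Moreover, the input you call a "rigidity argument on cohomology operations $\Hoh^2(-,\ZZ/n)\to\Hoh^5(-,\ZZ)$" in fact requires the nontrivial computation $\Hoh^5(\BPU_n,\ZZ)=0$ (Proposition~\ref{prop:cohbpun}), which occupies all of Section~\ref{sec:cohbpu} and is what forces the $k$-invariant $f_n$ to be a \emph{generator} rather than merely a multiple of one. Without that, you only know $f_n$ is some multiple of $Q$ and the lower bound on the index evaporates. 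So the skeleton is right, but the two load-bearing steps — the cohomology of $\BPU_n$, and the translation from Postnikov obstruction to AHSS differential via the universal twisted AHSS on $K(\ZZ/n,2)$ — need to be supplied.
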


Theorem A is proved by studying a sort of generalized cohomology operation, denoted by $G$.
We will define $G(\alpha)$ via a differential in the twisted Atiyah-Hirzebruch spectral
sequence associated to $\alpha$. Then, we prove that $G(\alpha)$ and $\tilde{Q}(\alpha)$
must generate the same subgroup of $\Hoh^5(X,\ZZ)/(\alpha\smile\Hoh^2(X,\ZZ))$.

The first consequence of the theorem is a disproof of the straw-man conjecture.

\begin{namedtheorem}[B]
    Let $n$ be a positive integer, and let $\epsilon(n)$ denote $n\gcd(2,n)$. There exists a connected finite CW complex $X$ of
    dimension $6$ equipped with a class $\alpha \in \Brt(X)$ for which $\pert(\alpha) = n$ and $\indt(\alpha)=\epsilon(n)n$.
\end{namedtheorem}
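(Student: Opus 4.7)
The plan is to invoke Theorem~A(3), which for any CW complex $X$ of dimension at most $6$ gives $\indt(\alpha)=\pert(\alpha)\cdot\ord(\tilde Q(\xi))$. Combined with the upper bound $\ord(\tilde Q(\xi))\mid\epsilon(n)$ from Theorem~A(2), this reduces the problem to constructing, for each $n\geq 1$, a connected finite $6$-dimensional CW complex $X$ carrying a Brauer class $\alpha$ of period exactly $n$ for which $\ord(\tilde Q(\xi))$ attains the maximum value $\epsilon(n)$.

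I would take $X$ to be the $6$-skeleton of a minimal CW model of the Eilenberg--MacLane space $K(\ZZ/n,2)$, let $\xi\in\Hoh^2(X,\ZZ/n)$ be the restriction of the fundamental class, and put $\alpha=\beta_n(\xi)$. Cellular cohomology gives $\Hoh^i(X,A)\cong\Hoh^i(K(\ZZ/n,2),A)$ for $i\leq 5$, and universal coefficients together with the standard computation of the low-dimensional homology of $K(\ZZ/n,2)$ yields $\Hoh^2(X,\ZZ)=\Hoh^4(X,\ZZ)=0$ and $\Hoh^3(X,\ZZ)=\ZZ/n$, generated by $\alpha$. Hence $\pert(\alpha)=n$; the subgroup $\alpha\smile\Hoh^2(X,\ZZ)\subset\Hoh^5(X,\ZZ)$ is trivial, so $\tilde Q(\xi)=Q(\xi)$; and the vanishing $\Hoh^4(X,\ZZ)=0$ makes the Bockstein $\beta_k:\Hoh^4(X,\ZZ/k)\to\Hoh^5(X,\ZZ)$ injective for every $k$. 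The computation of $\ord(Q(\xi))$ therefore reduces to computing $\ord(\xi^2)$ in the odd case and $\ord(P_2(\xi))$ in the even case.

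For $n$ odd, Cartan's description of $\Hoh^*(K(\ZZ/n,2),\ZZ/n)$ identifies $\iota_n^2$ as a generator of a $\ZZ/n$ summand of $\Hoh^4$, so $\ord(\xi^2)=n$ and $\ord(\tilde Q(\xi))=n=\epsilon(n)$. For $n$ even, I would combine the identity $2P_2(\xi)=\jmath_*(\xi^2)$, in which $\jmath:\ZZ/n\rcof\ZZ/(2n)$ is the doubling inclusion, with injectivity of $\jmath_*$ on $\Hoh^4(X,\ZZ/n)$, to force $\ord(P_2(\xi))=2n$. The injectivity follows from the long exact sequence of $0\to\ZZ/n\to\ZZ/(2n)\to\ZZ/2\to 0$ once one shows that the associated connecting map $\Hoh^3(X,\ZZ/2)\to\Hoh^4(X,\ZZ/n)$ vanishes, which is an instance of $\Sq^1\Sq^1=0$ applied to an appropriate Steenrod-algebra generator in the mod-$2$ cohomology of $K(\ZZ/n,2)$. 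The main obstacle is precisely this even-case computation: the extra factor of $2$ in $\epsilon(n)$ arises from the genuine Pontryagin square rather than from the naive cup square, and its order must be tracked faithfully through two distinct coefficient sequences.
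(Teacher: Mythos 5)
Your choice of $X = \sk_6(K(\ZZ/n,2))$, of $\alpha = \beta_n$ of the restricted fundamental class, and the reduction via Theorem~A to showing $\ord(\tilde Q(\xi)) = \epsilon(n)$ are exactly what the paper does. The paper then finishes in one line: Section~4 already records the Cartan computation $\Hoh^5(K(\ZZ/n,2),\ZZ) \iso \ZZ/\epsilon(n)$ with $Q_n$ a generator, and restriction to the $6$-skeleton is injective in degrees $\le 5$, so $\ord(\tilde Q(\xi))=\epsilon(n)$ with nothing further to check.

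Your argument instead re-derives $\ord(Q(\xi))$ from the cup and Pontryagin squares, which duplicates what the cited Cartan calculation already provides, and the even-case branch has a genuine gap. To show $\jmath_*\colon \Hoh^4(X,\ZZ/n)\to\Hoh^4(X,\ZZ/2n)$ is injective you assert the connecting map $\delta\colon \Hoh^3(X,\ZZ/2)\to\Hoh^4(X,\ZZ/n)$ vanishes ``as an instance of $\mathrm{Sq}^1\mathrm{Sq}^1=0$.'' When $n\equiv 2\pmod 4$ this closes: there the generator $\theta$ of $\Hoh^3(X,\ZZ/2)\iso\ZZ/2$ is $\mathrm{Sq}^1$ of the mod-$2$ fundamental class, naturality of connecting maps shows the mod-$2$ reduction of $\delta(\theta)$ equals $\mathrm{Sq}^1\theta=0$, and since the unique order-$2$ element $n/2$ of $\Hoh^4(X,\ZZ/n)\iso\ZZ/n$ is \emph{odd} in this case, $\delta(\theta)$ is forced to be zero. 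But when $4\mid n$ the mod-$2$ fundamental class lifts to $\ZZ/4$, so $\mathrm{Sq}^1$ kills it; the generator $\theta$ is instead the mod-$2$ reduction of the integral class $\alpha$, which is not an $\mathrm{Sq}^1$ of anything, and $\delta$ is a higher Bockstein rather than $\mathrm{Sq}^1$. The relation $\mathrm{Sq}^1\mathrm{Sq}^1=0$ still shows the mod-$2$ reduction of $\delta(\theta)$ vanishes (because $\mathrm{Sq}^1\theta=0$), but now $n/2$ is even, so this no longer excludes $\delta(\theta)=n/2$, and your argument stalls. The conclusion $\delta=0$ is true, but requires a different justification; the cleanest is that the map of coefficient sequences gives $\beta_{2n}\circ\jmath_*=\beta_n$, with $\beta_n$ injective and $\beta_{2n}$ an isomorphism because $\Hoh^4(X,\ZZ)=0$ and $\Hoh^5(X,\ZZ)\iso\ZZ/2n$. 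Better still, dispense with all of this and use what Section~4 already gives you: $Q_n$ generates a cyclic group of order $\epsilon(n)$, so its order is $\epsilon(n)$, full stop.
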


Thus, if $X$ is a finite CW complex of dimension at
most $6$ and $\alpha\in\Brt(X)$, then
\begin{equation*}
    \indt(\alpha)\left|\begin{cases}
        \pert(\alpha)^2     &   \text{if $\pert(\alpha)$ is odd,}\\
        2\pert(\alpha)^2    &   \text{if $\pert(\alpha)$ is even.}
    \end{cases}\right.
\end{equation*}
These bounds are moreover sharp. Together with the facts that $\pert(\alpha)$ and
$\indt(\alpha)$ are positive and have the same prime divisors, the bounds are the only restriction on the pair of integers
$(\pert(\alpha),\indt(\alpha))$ for Brauer classes on such a space $X$.

The theory may also be applied to the period--index conjecture for function fields.

\begin{namedtheorem}[C]
  If there exists a smooth projective complex $3$-fold $X$ and a $2$-torsion class
  $\alpha\in\Br(X)$ such that $2\tilde{Q}(\bar{\alpha})\neq
  0$, where $\bar{\alpha}$ is the image of $\alpha$ in $\Hoh^3(X,\ZZ)_{\tors}$, then the period--index conjecture fails at the prime
  $2$ for the function field $\CC(X)$ of $X$: the image of the class $\alpha$ in $\Br(\CC(X))$ has period $2$ and index at least $8$.
\end{namedtheorem}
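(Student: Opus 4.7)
The plan is to deduce Theorem~C from Theorem~A applied to $\bar\alpha\in\Brt(X)$. First, I would observe that $\tilde Q$ vanishes on the zero class (for $\bar\alpha=0$ one has $n=1$ and the unique choice $\xi=0$), so the hypothesis $2\tilde Q(\bar\alpha)\neq 0$ already forces $\bar\alpha\neq 0$. Combined with the fact that $\alpha$, hence $\bar\alpha$, is $2$-torsion, this yields $\pert(\bar\alpha)=2$. Part~(2) of Theorem~A then says $\ord(\tilde Q(\bar\alpha))$ divides $2\pert(\bar\alpha)=4$, and the hypothesis $2\tilde Q(\bar\alpha)\neq 0$ excludes orders $1$ and $2$; therefore $\ord(\tilde Q(\bar\alpha))=4$.

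Next, $X$ is a smooth projective complex $3$-fold, hence has the homotopy type of a closed orientable real $6$-manifold and meets the dimension hypothesis of Theorem~A(3). That part of the theorem gives
\[
\indt(\bar\alpha) \;=\; \pert(\bar\alpha)\cdot\ord(\tilde Q(\bar\alpha)) \;=\; 2\cdot 4 \;=\; 8.
\]
The divisibility $\indt(\bar\alpha)\mid\ind(\alpha)$ recorded in the introduction (every \'etale $\PGL_n$-bundle underlies a topological $\PU_n$-bundle) then upgrades this to $8\mid\ind(\alpha)$.

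Finally, to transfer the bound to the function field I would invoke the equality $\ind(\alpha)=\ind(\alpha_\eta)$ for a class on a smooth projective variety over an algebraically closed field, where $\alpha_\eta\in\Br(\CC(X))$ denotes the restriction to the generic point; this equality rests on extending Azumaya algebras (equivalently, Brauer--Severi schemes) representing $\alpha_\eta$ from the generic point to all of $X$, using purity of the Brauer group on smooth varieties. Granting it, $\ind(\alpha_\eta)\geq 8$. Since $\alpha_\eta$ is the restriction of a $2$-torsion class and is nonzero (its index exceeds $1$), $\per(\alpha_\eta)=2$. The period--index conjecture predicts $\ind(\alpha_\eta)\mid\per(\alpha_\eta)^{d-1}=4$ for a $3$-fold; the lower bound $\ind(\alpha_\eta)\geq 8$ violates this at the prime $2$, since the $2$-adic valuation of the index is at least $3$ while that of $\per(\alpha_\eta)^{d-1}$ is $2$. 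The main obstacle I foresee is precisely this index comparison at the generic point; modulo that standard input, the argument is a direct application of Theorem~A.
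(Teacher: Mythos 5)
Your overall strategy matches the paper's: apply Theorem~A to $\bar\alpha$ to pin down $\indt(\bar\alpha)=8$, use $\indt(\bar\alpha)\mid\ind(\alpha)$, and then transfer the bound to $\Br(\CC(X))$ via the equality $\ind(\alpha)=\ind(\alpha_K)$ for $K=\CC(X)$. The deduction that $\pert(\bar\alpha)=2$ and $\ord(\tilde Q(\bar\alpha))=4$ from the hypotheses, and hence $\indt(\bar\alpha)=8$ on a $3$-fold, is exactly what the paper does.

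The one place you go astray is your sketched justification of the equality $\ind(\alpha)=\ind(\alpha_\eta)$. You propose to prove it by ``extending Azumaya algebras (equivalently, Brauer--Severi schemes) representing $\alpha_\eta$ from the generic point to all of $X$, using purity of the Brauer group.'' Purity lets you extend the \emph{Brauer class}, but it does not let you extend an Azumaya algebra of the minimal degree; in fact the authors of this very paper proved elsewhere (\cite{aw4}) that unramified division algebras need not contain Azumaya maximal orders, so an argument via extending the minimal-degree Azumaya algebra cannot work in general. The paper's proof of Proposition~\ref{p:Saltman} (attributed to Saltman) instead works with $\alpha$-twisted sheaves: any $\alpha_K$-twisted vector bundle on $\Spec K$ of rank $\ind(\alpha_K)$ extends to an $\alpha$-twisted \emph{coherent sheaf} on $X$ of the same rank, and then regularity of $X$ gives $\G_0^\alpha(X)\iso\K_0^\alpha(X)$, so the same rank is realized by an $\alpha$-twisted vector bundle, whence $\ind(\alpha)\mid\ind(\alpha_K)$. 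You correctly flagged this step as the main obstacle, and the conclusion you invoke is true, but the mechanism you offered for it is not.
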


At present, we are unable to furnish any examples of smooth projective complex $3$-folds meeting the conditions of the theorem.

The theorem is proved in Section~\ref{sec:3folds} via two comparison results, the first comparing the topological and algebraic
period--index problems on $X$, the second comparing the period--index problem on the scheme $X$ with the problem for the field $\CC(X)$ by
way of a result suggested to the first author by David Saltman at the AIM workshop ``Quadratic forms, division algebras, and patching'' in
January 2011.

We are able to furnish examples of smooth projective or smooth affine schemes and classes $\bar{ \alpha} \in \Brt(X)$ for which $2
\tilde{Q}(\alpha) \neq 0$, so Theorem C presents a dichotomy: either there are previously unknown
restrictions on the $2$-torsion in the singular cohomology of smooth projective $3$-folds that are peculiar to the $3$-dimensional case, or
the period--index conjecture is false. For later use we remark that there exists an affine $5$-fold complex variety $X$ having the homotopy type of a
$5$-dimensional CW complex and a class $\alpha \in \Br(X)$ such that $\pert(\bar \alpha) =
2$ and $\indt(\bar \alpha) = 8$.

We may further apply Theorem C to the study of quadratic forms on schemes. There is a map, the Clifford map, which assigns a
central simple algebra to an even-dimensional quadratic module of trivial discriminant, $(V,q)$, compatibly with direct summation of
quadratic modules; quaternion algebras are a particular case of this construction. The Clifford map yields a Clifford invariant map $\Cl:
I^2(k) \to {_2\Br(k)}$ from the group of Witt classes of even rank and trivial discriminant to the $2$--torsion in the Brauer group. A
theorem of Merkurjev, \cite{merkurjev}, states that when the characteristic of $k$ is not $2$, the Clifford invariant map is
surjective.

As the notions of quadratic module, Witt group, Brauer group, and Clifford algebra extend to commutative rings and schemes in general, Alex
Hahn asked whether there existed a counterexample to the theorem of Merkurjev in the context of rings. A counterexample was first constructed
by Parimala and Sridharan in \cite{parimala-sridharan-1} over a smooth proper curve over a local field, although they observed that for
affine curves over local fields the map is surjective. In~\cite{parimala-sridharan-2}, they subsequently employed Jouanolou's device to construct an example
of an affine variety over a local field where the Clifford invariant map is not surjective, thus answering Hahn's question.

For fifteen years the matter seemed settled, until Auel showed in~\cite{auel} that the examples of Parimala and Sridharan were explained by an
obstruction in the Picard group. Over a scheme $X$, there is a total Witt group $W_{\tot}(X)$ constructed using line-bundle-valued quadratic
modules and a subgroup $I^2_{\tot}(X)$ of even-dimensional forms with trivial discriminant. The total Clifford invariant map
$\Cl_\tot:I^2_{\tot}(X)\rightarrow{_2\Br(X)}$ is a generalization of the simple Clifford invariant map to befit the case of nontrivial
Picard group.

\begin{question}\label{quest:surjective}
    Is the total Clifford invariant map $\Cl_{\tot}:I^2_{\tot}(X)\rightarrow{_2\Br(X)}$ surjective?
\end{question}

This question is posed by Auel at the end of~\cite{auel} for schemes $X$ for which the function field has $2$-cohomological dimension at
most $3$.  We show that Question~\ref{quest:surjective} has a negative answer by constructing a smooth affine complex $5$-fold $X$ where
$\Cl_{\tot}$ is not surjective. The example we construct is such that $\Pic(X)=0$ so the problem reduces to the surjectivity of $\Cl$.

\begin{namedtheorem}[D] \label{thm:d}
    There exists a $5$-dimensional smooth affine complex variety $X$ such that
    $\Br(X)=\ZZ/2$, but the total Clifford invariant map vanishes.
\end{namedtheorem}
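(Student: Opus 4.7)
The plan is to exhibit a smooth affine complex $5$-fold $X$---essentially the one alluded to in the remark immediately preceding Theorem~D, tweaked so that $\Br(X) = \ZZ/2$ and $\Pic(X) = 0$---whose nontrivial Brauer class $\alpha$ has topological index $8$, and then to show that any class in the image of the (total) Clifford invariant map has algebraic index strictly less than $8$, so that $\alpha$ is not hit and $\Cl_{\tot}$ is the zero map.

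First I would construct $X$. Starting from a finite $5$-dimensional CW complex $Y$ whose cohomology satisfies the hypotheses of Theorem~A with $\pert(\alpha) = 2$ and $\ord(\tilde Q(\alpha)) = 4$---concretely, a class $\xi \in \Hoh^2(Y, \ZZ/2)$ whose Pontryagin square $P_2(\xi)$ reduces to a class of order $4$ in $\Hoh^5(Y, \ZZ) / (\alpha \smile \Hoh^2(Y, \ZZ))$---part~(3) of Theorem~A forces $\indt(\beta_2(\xi)) = 8$. I would then realize $Y$ as a smooth projective complex variety (for example, by taking a skeleton of a product of Brauer--Severi varieties and Eilenberg--MacLane pieces and approximating it algebraically) and apply Jouanolou's device to obtain a smooth affine variety $X$ of the same homotopy type. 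Chosen carefully, the construction produces $X$ with $\Br(X) = \ZZ/2$ and $\Pic(X) = 0$, so that line-bundle--valued quadratic modules reduce to ordinary quadratic modules and the domain of $\Cl_{\tot}$ is simply $I^2(X)$.

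Next I would invoke a splitting theorem for quadratic modules on $X$: by Suslin-type cancellation for quadratic modules over smooth affine varieties of dimension $d$ over an algebraically closed field of characteristic not $2$, every non-degenerate quadratic module of sufficiently high rank contains a hyperbolic plane as a direct summand, and for $\dim X = 5$ the bound is small enough to conclude that every class in $I^2(X)$ is represented by a quadratic module $(V, q)$ of rank $2m \le 6$. The even Clifford algebra $C_0(V, q)$ of a trivial-discriminant rank-$2m$ quadratic module decomposes as a product of two mutually opposite Azumaya algebras each of degree $2^{m-1}$, so $\Cl(V, q) \in {}_2\Br(X)$ has algebraic index dividing $2^{m-1} \le 4$. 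Since $\ind(\alpha) \ge \indt(\bar{\alpha}) = 8$, the generator $\alpha$ of $\Br(X)$ cannot lie in the image of $\Cl_{\tot}$, and the map is therefore zero.

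The main obstacle is the first step: engineering the CW-complex cohomology so that the Pontryagin-square reduction in Theorem~A has order exactly $4$ (which is what forces $\indt = 8$), while simultaneously keeping the $2$-torsion in $\Hoh^3$ equal to $\ZZ/2$, and ensuring that algebraization together with Jouanolou's device does not enlarge either the Brauer group or the Picard group beyond the target values. A secondary technical point is locating the correct cancellation statement for quadratic modules on a smooth affine complex $5$-fold that is sharp enough to force rank $\le 6$ in $I^2(X)$ rather than merely $\le 8$, since a bound of $8$ alone would leave the critical case $m=4$ and Clifford index $8$ intact.
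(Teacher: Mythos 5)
Your overall strategy in the first and last steps matches the paper: take the affine $5$-fold $X = X_{2,8}$ from Proposition~\ref{p:YXcons} with a class $\alpha$ of topological period $2$ and topological index $8$, check $\Pic(X)=0$ so the total and ordinary Clifford invariants agree, and conclude by showing no class of topological index $8$ can be in the image. However, the middle step of your argument --- the mechanism that actually excludes $\alpha$ from the image of $\Cl$ --- is genuinely different from the paper's and, as written, contains a gap that is not merely a ``secondary technical point'' but is fatal.

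You propose to bound the index of any class in the image of $\Cl$ by invoking a Suslin-type cancellation theorem forcing every class in $I^2(X)$ to be represented by a quadratic module of rank at most $6$. No such theorem with that bound is known (nor is it cited in the paper), and you yourself observe that a weaker bound of $8$ would not close the argument, since a rank-$8$ form gives a Clifford algebra of degree $8$ and so no contradiction with $\ind(\alpha)\geq 8$. The known cancellation and stability theorems for quadratic modules over affine varieties of dimension $d$ do not produce a ceiling of $d+1$ on the minimal rank of a representative of a Witt class in $I^2$; in general the rank bounds they provide grow at least linearly in $d$ but with larger constants, and in any case the existing results do not give the sharp bound your argument needs. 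So this step would fail.

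The paper takes a different route entirely, one that bypasses cancellation. The Clifford invariant map on quadratic bundles is classified by a map $\B\SO_{2n}(\CC) \to \B\PGL_{2^n}(\CC)$, and taking compact forms and stabilizing, any class $\bar\alpha$ in the image of the Clifford invariant map is pulled back via a map $X \to \BSO$ from the universal $2$-torsion class $\beta(w_2)\in\Hoh^3(\BSO,\ZZ)$. The paper then computes, using the first stage of the Postnikov tower of $\BSO$ and the known low-degree cohomology $\Hoh^5(\BSO,\ZZ) = \ZZ/2\cdot\beta(w_4)$, that $G(\beta(w_2)) = \beta(w_4)$ has order exactly $2$. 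By the naturality of $G$ (Theorem~\ref{thm:wasA}), $\ord(G(\bar\alpha))$ divides $2$ for every class in the image, and Theorem~A then gives $\indt(\bar\alpha)\mid 4$ on spaces of dimension at most $6$. Since $\ord(G(\bar\alpha)) = 4$ for the generator of $\Br(X_{2,8})$, that class cannot be in the image. This argument is purely topological and uses only naturality of the obstruction operation $G$, with no appeal to algebraic cancellation theorems. If you want to rescue your approach, the reliable path is this $\BSO$ factorization and the order-$2$ computation of $G$ on the universal class; the cancellation route is both unproven and not sharp enough.

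A secondary, smaller point: the paper's proof that $\Pic(X)=0$ is not automatic from the construction and takes a real argument (torsion-freeness and $n$-divisibility from \'etale cohomology, plus finite generation established by compactification, resolution of singularities, and the observation that a nontrivial $\Pic^0$ of the compactification would create unbounded torsion downstairs). Your proposal gestures at this but does not supply it.
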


The proof shows that the topological index of a class in the image of the total Clifford invariant map must divide $4$ when $X$ has the
homotopy type of a $6$-dimensional CW complex. The affine $5$-fold $Y$, previously encountered in our discussion of Theorem C, now
serves to provide a counterexample. We remark that a smooth projective $3$--fold for which the period--index conjecture were to fail for the reasons given in
that theorem could be modified to furnish a negative answer to Auel's low-dimensional question.

% All the results enumerated up to this point deal with the study of Azumaya algebras on CW complexes of 
% dimension no greater than $6$, and the proofs generally rely on explicit depictions of the low-dimensional part of Postnikov towers of
% universal examples. In an appendix, we turn to the case of arbitrary dimension and deal in estimates. By
% considering the cohomology of Eilenberg--MacLane spaces, $K(\ZZ/m,2)$, we establish a better bound on the topological index in terms of the
% period than we were able to derive from the stable homotopy of spheres in~\cite{aw1}.

% \marginpar{Update!}
% \begin{namedtheorem}[E]
%     Let $X$ be a finite $2d$-dimensional CW complex, and let $\alpha\in\Brt'(X)$ have topological period
%     $p_1^{r_1}\cdots p_k^{r_k}$.
%     Then
%     \begin{equation*}
%         \indt(\alpha)|\prod_{i=1}^k
%         {p_i}^{(d-1)r_i+v_{p_i}(2)+\cdots+v_{p_i}(d-1)}=\pert(\alpha)^{d-1}\prod_{i=1}^k {p_i}^{v_p((d-1)!)},
%     \end{equation*}
%     where $v_{p_i}$ is the $p_i$-adic valuation.
% \end{namedtheorem}

% \begin{nonucorollary}
%     Let $X$ be a $2d$-dimensional CW complex, and let $\alpha\in\Brt(X)$ have period $m$
%     such that all prime divisors of $m$ are greater than $d-1$. Then,
%     \begin{equation*}
%         \indt(\alpha)|\pert(\alpha)^{d-1}.
%     \end{equation*}
% \end{nonucorollary}

% The theorem improves on the bounds of~\cite{aw1} in a number of ways, most dramatically for primes that are small relative to the
% cohomological dimension.

\textbf{Acknowledgments.} The authors would like to thank Asher Auel for conversations about
the total Clifford invariant, Johan de Jong for pointing out a mistake in an argument in an
earlier version, and David Saltman for explaining that the index of an unramified call can
be computed either globally or over the function field. We would like to thank a referee for
a thorough reading of the paper, which allowed us to fix many mistakes and improve the exposition.

\section{The Existence of $G$}\label{sec:g}

This and the following three sections of this paper are devoted to the definition and calculation of a kind of generalized cohomology
operation, denoted $G$. The intricacy in the definition of $G$ is that we should like to say that for a connected CW complex $X$, the
operation $G_X$ is defined on $\Hoh^3(X, \ZZ)_\tors$, and that $G_X(\alpha)$ is an element of the group $\Hoh^5(X, \ZZ)/ (\Hoh^2(X, \ZZ)
\smile \alpha)$, which is a group that varies with $\alpha$.

We shall therefore construct an operation $G$ which is an assignment to any connected CW complex $X$ of a function $G_X : \Hoh^3(X, \ZZ)_\tors
\to \mathcal{P}(\Hoh^5(X, \ZZ))$, where $\mathcal{P}(S) $ denotes the power set of $S$, such that $G_X(\alpha)$ is a coset of the subgroup
$\Hoh^2(X, \ZZ) \smile \alpha$ in $\Hoh^5(X, \ZZ)$. We shall generally write $G(\alpha)$ for $G_X(\alpha)$, since the class $\alpha$
determines $X$, and we shall understand $G(\alpha)$ as an element of the group $\Hoh^5(X, \ZZ) / (\Hoh^2(X, \ZZ) \smile \alpha)$. The
construction of $G$ is as the image of a certain element under a $d_5$-differential in a spectral sequence.

The coefficient sheaf $\ZZ(q/2)$ is defined to be $\ZZ$ if $q$ is even and $0$ if $q$ is odd. We remind the reader that all results applying
to CW complexes depend only on the homotopy type of the complexes involved, but for ease of exposition we write, for example, the brief ``a
CW complex $X$ of dimension $6$'' rather than the longer ``a space $X$ having the homotopy type of a CW complex of dimension $6$''.

\begin{proposition}\label{p:descSS}
    Let $X$ be a topological space, and let $\alpha\in\Hoh^3(X,\ZZ)_{\tors}$.
    There is an $\alpha$-twisted Atiyah--Hirzebruch spectral sequence, functorial in the pair $(X, \alpha)$, 
    \begin{equation*}
        \Eoh_2^{p,q}=\Hoh^p(X,\ZZ(q/2))\Longrightarrow \KU^{p+q}(X)_{\alpha}
    \end{equation*}
    with differentials $d_r^{\alpha}$ of degree $(r,-r+1)$, which converges strongly if $X$ is a finite-dimensional CW complex.
    The edge map $\KU^0(X)_{\alpha}\rightarrow\Hoh^0(X,\ZZ)$ is the rank map.
\end{proposition}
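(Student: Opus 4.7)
The plan is to realize the spectral sequence as the skeletal-filtration spectral sequence for a model of twisted $K$-theory that is well enough behaved that the usual machinery applies.

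First, I would fix a concrete presentation of the twist. Because $\alpha\in\Hoh^3(X,\ZZ)_{\tors}$, it is classified by a map $X\to\B^2\U(1)=\Brm\PU$, or equivalently by a principal $\PU_n$-bundle $P\to X$ for some $n$ divisible by $\pert(\alpha)$. Out of this datum one associates an $\alpha$-twisted bundle of $\KU$-module spectra $\mathcal{KU}_\alpha\to X$ (for concreteness: sections of the Fredholm bundle on the $\alpha$-twisted Hilbert bundle, or equivalently the bundle of algebras $P\times^{\PU_n}\Mat_n(\CC)$); set $\KU^n(X)_\alpha$ to be the group of homotopy classes of sections of the $n$-fold shift. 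All that is needed downstream is homotopy invariance, excision, a Milnor-type exact sequence for increasing unions, and the fact that the restriction of $\mathcal{KU}_\alpha$ to any contractible subcomplex is canonically equivalent to the constant $\KU$-bundle.

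Second, assuming $X$ is a CW complex, I would run the standard exact couple for the skeletal filtration $\emptyset\subseteq X^0\subseteq X^1\subseteq\cdots$, with
\begin{equation*}
\D_1^{p,q}=\KU^{p+q}(X^p)_\alpha,\qquad \Eoh_1^{p,q}=\KU^{p+q}(X^p,X^{p-1})_\alpha,
\end{equation*}
and differentials $d_r$ of bidegree $(r,-r+1)$. To identify $\Eoh_1$, note that $X^p/X^{p-1}\we\bigvee_{I_p}S^p$ and the twist restricts trivially to each $p$-cell, since each cell is contractible. Using excision and the trivialization,
\begin{equation*}
\Eoh_1^{p,q}\iso\bigoplus_{I_p}\tilde\KU^{p+q}(S^p)\iso C^p(X,\KU^q(\pt)),
\end{equation*}
and Bott periodicity gives $\KU^q(\pt)=\ZZ(q/2)$. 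Identification of $d_1$ with the cellular coboundary reduces, by naturality, to the universal case of a pair of adjacent skeleta, where the twist is already trivial, so the computation is the same as in the untwisted AHSS. Taking cohomology yields $\Eoh_2^{p,q}=\Hoh^p(X,\ZZ(q/2))$.

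Third, I would check strong convergence and the edge map. If $\dim X<\infty$, the skeletal filtration is finite; the spectral sequence degenerates at a finite page and $\Eoh_\infty^{p,q}$ sits in the finite filtration $F^p\KU^{p+q}(X)_\alpha=\ker\bigl(\KU^{p+q}(X)_\alpha\to\KU^{p+q}(X^{p-1})_\alpha\bigr)$, giving strong convergence. Functoriality in $(X,\alpha)$ is built in: a map $f:X\to Y$ with $f^*\beta=\alpha$ pulls back the parametrized spectrum $\mathcal{KU}_\beta$ to $\mathcal{KU}_\alpha$, and after a cellular approximation of $f$ this induces a map of filtered exact couples. For the edge map, $\Eoh_2^{0,0}=\Hoh^0(X,\ZZ)$ is the target of $\KU^0(X)_\alpha\to\KU^0(X^0)_\alpha=\KU^0(X^0)$, and on each vertex $x\in X^0$ the twist is canonically trivialized, so the resulting integer is the rank of the underlying (untwisted) $K$-theory class at $x$.

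The main obstacle is not any single calculation but fixing a model of twisted $\KU$ rigid enough that (i) the skeletal exact couple is functorial in $(X,\alpha)$, (ii) the twist may be canonically trivialized on each cell in a compatible way, and (iii) the comparison of $d_1$ with the cellular coboundary can be effected by a universal-example argument. Once a model with these formal properties is in place (which can be extracted from Rosenberg's or Atiyah--Segal's construction, and is already used in the companion paper \cite{aw1}), the rest of the proof is essentially the standard Atiyah--Hirzebruch construction.
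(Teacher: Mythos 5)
The paper proves this proposition by citation alone: it points to Theorem~4.1 of Atiyah--Segal and stops. Your proposal, by contrast, sketches the underlying construction, which is in substance the argument of the cited reference: fix a parametrized model of twisted $\KU$, filter by skeleta, identify $\Eoh_1^{p,q}$ with cellular cochains $C^p(X,\KU^q(\pt))$ using excision and the triviality of the twist on each cell, identify $d_1$ with the cellular coboundary by a universal-example argument, and read off strong convergence for finite-dimensional $X$ from the finiteness of the filtration; the edge map is then the restriction to the $0$-skeleton, where the twist trivializes and one sees the rank. This is correct in outline and is a reasonable thing to supply in place of a bare citation. One caveat worth flagging: the proposition is stated for an arbitrary topological space $X$, and a torsion class in $\Hoh^3(X,\ZZ)$ need not be classified by a finite-dimensional $\PU_n$-bundle (Serre's theorem applies to finite CW complexes); so to cover the stated generality you must rely on the infinite-dimensional Fredholm/Hilbert-bundle model you mention parenthetically, and precede the skeletal argument by a CW approximation, noting that both twisted $\KU$ and ordinary cohomology are homotopy-invariant. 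With that adjustment your route and the one encapsulated in the citation coincide; the paper simply chooses to treat the construction as a black box, while your write-up exposes the mechanism, which is helpful for seeing later where the $d_3$ and $d_5$ differentials come from.
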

\begin{proof}
        This is~\cite[Theorem~4.1]{atiyah-segal}.
\end{proof}

The spectral sequence is functorial in that a map $f: Y \to X$ and a class $\alpha \in \Hoh^3(X, \ZZ)$ give
rise to a map of spectral sequences compatible with the map $\KU^*(X)_\alpha \to \KU^*(Y)_{f^*(\alpha)}$. 

\begin{proposition} \label{p:d31}
  Let $X$ be a connected CW complex. The differential
    \begin{equation*}
      d_3^{\alpha}:\Hoh^0(X,\ZZ)\longrightarrow\Hoh^3(X,\ZZ)
    \end{equation*}
    sends $1$ to $\alpha$.
\end{proposition}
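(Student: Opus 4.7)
The plan is to reduce the statement via naturality and then identify a single universal constant. First, I would apply the functoriality of the twisted Atiyah--Hirzebruch spectral sequence (Proposition~\ref{p:descSS}) to reduce to a universal case. If $\alpha \in \Hoh^3(X,\ZZ)$ is classified by a map $f \colon X \to K(\ZZ,3)$ with $f^*\iota = \alpha$, where $\iota$ generates $\Hoh^3(K(\ZZ,3),\ZZ) \cong \ZZ$, then naturality gives $d_3^\alpha(1) = f^*\bigl(d_3^\iota(1)\bigr)$. Writing $d_3^\iota(1) = k\iota$ for a universal integer $k$, the proposition reduces to showing $k = 1$. (If one prefers to stay within the torsion setting of Proposition~\ref{p:descSS}, one can equally well use the torsion classifying space $K(\ZZ/n,2)$ for $n = \pert(\alpha)$.)

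Next, to identify $k$, I would invoke the standard comparison formula
\begin{equation*}
d_3^\alpha(x) = d_3^0(x) + \alpha \smile x
\end{equation*}
on the $\Eoh_3$-page, where $d_3^0$ denotes the differential in the untwisted Atiyah--Hirzebruch spectral sequence for $\KU$. This formula, implicit in~\cite{atiyah-segal}, reflects the fact that the twist $\alpha$ deforms $\KU$ precisely at the level of its first nontrivial Postnikov $k$-invariant. Applied to $x = 1 \in \Hoh^0(X,\ZZ)$, the untwisted contribution $d_3^0(1)$ vanishes, since $1$ is the rank of the trivial vector bundle and hence a permanent cycle in the untwisted spectral sequence, yielding $d_3^\alpha(1) = \alpha \smile 1 = \alpha$.

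The principal obstacle is justifying the comparison formula; once it is granted, the result is immediate. A more hands-on alternative would be to evaluate the universal constant $k$ on a lens space $L_n^{2m+1}$ with $m$ large, where $d_3$ out of $\Hoh^0(L_n^{2m+1},\ZZ) = \ZZ$ is the only candidate to cut this group down to the image of the rank map on $\KU^0(L_n^{2m+1})_\alpha$. Since the topological index of the generator of $\Hoh^3(L_n^{2m+1},\ZZ) = \ZZ/n$ equals $n$ for $m$ sufficiently large, the image of the rank map is $n\ZZ$, forcing $k \equiv \pm 1 \pmod{n}$; the sign is then pinned down by an orientation convention, and letting $n$ vary concludes $k = 1$.
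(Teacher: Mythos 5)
Your main argument---granting the formula $d_3^\alpha(x) = d_3^0(x) + \alpha\smile x$ and putting $x=1$---is precisely the content of the reference the paper gives for this proposition, namely~\cite{atiyah-segal-cohomology}*{Proposition 4.6}; the paper's own proof is a bare citation to that result, so the core of your proposal and the paper's approach coincide. Two caveats on that main line: once the comparison formula is granted, the opening reduction to $K(\ZZ,3)$ or $K(\ZZ/n,2)$ is redundant (the formula applies directly on $X$); and since the paper itself \emph{derives} the comparison formula from Proposition~\ref{p:d31} via the module structure in the paragraph immediately following the statement, you must take care that the formula you invoke is established independently of the $x=1$ case, as Atiyah--Segal do by analyzing the $k$-invariants of the twisted spectrum, rather than recycled from the very fact you are proving.

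Your hands-on lens-space alternative, however, does not work. The lens space $L_n^{2m+1}=S^{2m+1}/(\ZZ/n)$ has integral cohomology agreeing with that of $\Brm(\ZZ/n)$ below the top dimension, so $\Hoh^3(L_n^{2m+1},\ZZ)=0$ for $m\ge 2$, while $\Hoh^3(L_n^3,\ZZ)\iso\ZZ$ is the torsion-free fundamental class. The cyclic group $\ZZ/n$ you want sits in $\Hoh^2$, not $\Hoh^3$, so lens spaces carry no nontrivial torsion Brauer classes at all and cannot detect the universal constant $k$. Independently of this, the asserted input that the topological index of a period-$n$ class ``equals $n$ for $m$ sufficiently large'' is exactly the kind of statement one would normally extract from the twisted Atiyah--Hirzebruch spectral sequence by Proposition~\ref{p:indKasPermCyc}; assuming it outright risks circularity. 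A more suitable test space carrying a $\ZZ/n$ in $\Hoh^3$ with vanishing $\Hoh^2(\cdot,\ZZ)$ is the $6$-skeleton of $K(\ZZ/n,2)$ used in the proof of Theorem B, though there too one would need to pin down the topological index by some means other than the differential you are trying to compute.
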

 \begin{proof}
        See~\cite{atiyah-segal-cohomology}*{Proposition 4.6} or~\cite{aw1}*{Proposition~2.4}.
    \end{proof}

The twisted spectral sequence is a module over the untwisted Atiyah-Hirzebruch spectral sequence By writing a class $x\in\Hoh^p(X,\ZZ(q/2))$
in the twisted spectral sequence as $x\cdot 1$, where $1$ is an element in the twisted spectral sequence and $x$ in the untwisted, we see
that $d_3^{\alpha}(x)=x\smile d_3^{\alpha}(1)+d_3(x)\smile 1=x\smile\alpha+d_3(x)$. The untwisted differential $d_3$ vanishes on
$\Hoh^2(X,\ZZ)$ since there are no non-zero cohomology operations $K(\ZZ,2)\rightarrow K(\ZZ,5)$ and therefore
$\Eoh_5^{5,-4}=\Hoh^5(X,\ZZ)/(\Hoh^2(X,\ZZ)\smile\alpha)$.

\begin{proposition} \label{p:indKasPermCyc}
  Let $X$ be a connected finite CW complex and let $\alpha \in \Hoh^3(X, \ZZ)_{\tors}$. Then
  $\ind_{\topo}(\alpha)$ is the positive generator of
  the subgroup $\Eoh_\infty^{0,0}$ of permanent cycles in $\Eoh_2^{0,0} = \Hoh^0(X, \ZZ) = \ZZ$ in the Atiyah-Hirzebruch spectral sequence of Proposition \ref{p:descSS}.
\end{proposition}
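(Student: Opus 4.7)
The plan is to identify $\Eoh_\infty^{0,0}$ with the image of the edge map $\KU^0(X)_{\alpha}\to \Hoh^0(X,\ZZ)=\ZZ$ of Proposition~\ref{p:descSS}, and then to identify that image, via the rank description of the edge map, with $\indt(\alpha)\ZZ$.

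For the first identification, the differentials $d_r^{\alpha}$ are of bidegree $(r,-r+1)$, so any differential hitting $\Eoh_r^{0,0}$ would originate at bidegree $(-r,r-1)$, which vanishes for cohomological reasons. Hence $\Eoh_\infty^{0,0}$ is genuinely a subgroup of $\Eoh_2^{0,0}$, namely the intersection of the kernels of all outgoing $d_r^{\alpha}$, with no quotienting by images. Strong convergence of the spectral sequence, available since $X$ is a finite CW complex, identifies $\Eoh_\infty^{0,0}$ with the lowest step $F^0/F^1$ of the filtration on $\KU^0(X)_{\alpha}$, and this quotient is exactly the image of the edge map. Since the edge map is the rank map, $\Eoh_\infty^{0,0}$ is the subgroup of $\ZZ$ spanned by ranks of classes in $\KU^0(X)_{\alpha}$.

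For the second identification, $\KU^0(X)_{\alpha}$ in the Atiyah--Segal model is the Grothendieck group of $\alpha$-twisted complex vector bundles on $X$, and the edge/rank map records the rank of a twisted bundle. An $\alpha$-twisted vector bundle of rank $n$ corresponds, up to twisting by line bundles, to a $\PU_n$-bundle $P$ with $\delta_n(P)=\alpha$; direct sum makes the set $S=\{n\in\NN : \alpha = \delta_n(P) \text{ for some }P\}$ closed under addition, and passing to virtual bundles realizes as image the subgroup of $\ZZ$ generated by $S$. By the very definition of the topological index as $\gcd S$, that subgroup is $\indt(\alpha)\ZZ$.

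Combining the two steps gives $\Eoh_\infty^{0,0}=\indt(\alpha)\ZZ$, whose positive generator is $\indt(\alpha)$. The only delicate point is the second step: one must justify the Atiyah--Segal identification of $\KU^0(X)_\alpha$ with twisted bundles and the correspondence between rank-$n$ twisted bundles and $\PU_n$-bundles realizing $\alpha$. Both facts are standard for finite CW complexes and are recorded in the companion paper~\cite{aw1}, so I would quote them rather than redevelop them here; note that finiteness of $X$ (via Serre's theorem alluded to in the introduction) ensures $\indt(\alpha)<\infty$, so the conclusion is non-vacuous.
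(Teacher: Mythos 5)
Your argument is correct and follows the same route the paper intends: the paper's proof consists entirely of a citation to Proposition~2.21 and Lemma~2.23 of~\cite{aw1}, which together record precisely the two steps you lay out (permanent cycles in $\Eoh^{0,0}_\infty$ equal the image of the rank/edge map, and that image equals $\indt(\alpha)\ZZ$ via the twisted-bundle interpretation of $\KU^0(X)_\alpha$). You even anticipate this by proposing to quote the companion paper for the Atiyah--Segal identification rather than redeveloping it.
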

\begin{proof}
    This is an amalgamation of Proposition 2.21 and Lemma 2.23 of \cite{aw1}.
  \end{proof}

Given a connected CW complex $X$ and a class $\alpha \in \Br'(X)$ we define $G_X(\alpha)$ as follows. 

\begin{definition} \label{d:wasA}
    Consider the $\alpha$-twisted Atiyah--Hirzebruch spectral sequence. The kernel of the first nonzero differential, $d^\alpha_3$, emanating
    from $\Eoh^{0,0}_*$ is $\pert(\alpha) \ZZ$, by Proposition \ref{p:d31}. The $d^\alpha_4$
    differential vanishes for degree reasons. The
    $d_5^{\alpha}$-differential emanating from $\Eoh_5^{0,0}$ takes the form $d^\alpha_5 : \pert(\alpha) \ZZ \to \Hoh^5(X,\ZZ)/(\Hoh^2(X,
    \ZZ)\smile\alpha)$. We define $G_X(\alpha)$ to be the element $d_5^\alpha( \pert(\alpha))$ in $\Hoh^5(X, \ZZ)/ (\Hoh^2(X, \ZZ) \smile \alpha)$.
\end{definition}

We write $G(\alpha)$ for $G_X(\alpha)$ in the sequel.  

\begin{theorem} \label{thm:wasA}
  If $f: Y \to X$ is a map of connected CW complexes and if $\alpha \in \Br'(X)$, then $G$ satisfies the naturality condition
  \begin{equation} \label{eq:nat} f^* G(\alpha) = \frac{\pert(\alpha)}{\pert(f^*(\alpha))} G(f^* \alpha). \end{equation}
  For any connected finite CW complex, $X$ and class $\alpha \in \Br'(X)$, the relation
  \begin{equation} \label{eq:div1} \pert(\alpha) \ord(G(\alpha)) | \indt(\alpha) \end{equation}
  holds, and, if $X$ is of dimension no greater than $6$, then
  \begin{equation} \label{eq:div2} \pert(\alpha) \ord(G(\alpha)) = \indt(\alpha). \end{equation}
\end{theorem}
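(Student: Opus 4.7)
The theorem has three ingredients, and all three reduce to tracking the column $\Eoh_*^{0,0}$ in the twisted Atiyah--Hirzebruch spectral sequence of Proposition~\ref{p:descSS} and invoking the identification of $\indt(\alpha)$ with the positive generator of $\Eoh_\infty^{0,0}$ furnished by Proposition~\ref{p:indKasPermCyc}. Throughout, I would use the $\ZZ$-linearity of the differentials and the fact that $G(\alpha) = d_5^\alpha(\pert(\alpha))$ by definition.

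For the naturality~(\ref{eq:nat}), I would invoke the functoriality of the spectral sequence asserted in Proposition~\ref{p:descSS}: a map $f: Y \to X$ induces a morphism from the $\alpha$-twisted spectral sequence on $X$ to the $f^*\alpha$-twisted spectral sequence on $Y$, compatibly with pullback on $\Hoh^*$. The integer $\pert(\alpha) \in \Eoh_5^{0,0}$ for $(X,\alpha)$ pulls back to the integer $\pert(\alpha) \in \Eoh_5^{0,0}$ for $(Y, f^*\alpha)$, which factors as $(\pert(\alpha)/\pert(f^*\alpha)) \cdot \pert(f^*\alpha)$. Applying $d_5^{f^*\alpha}$ and using $\ZZ$-linearity yields the claimed formula, both sides understood modulo $\Hoh^2(Y,\ZZ) \smile f^*\alpha$.

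For~(\ref{eq:div1}), Proposition~\ref{p:indKasPermCyc} says $\indt(\alpha)$ is a permanent cycle, hence lies in $\ker d_3^\alpha \cap \ker d_5^\alpha$. Proposition~\ref{p:d31} gives $\ker d_3^\alpha = \pert(\alpha)\ZZ$, so I would write $\indt(\alpha) = \pert(\alpha) m$, whence $0 = d_5^\alpha(\pert(\alpha)m) = m \cdot G(\alpha)$ forces $\ord(G(\alpha)) \mid m$, and hence $\pert(\alpha)\ord(G(\alpha)) \mid \indt(\alpha)$. To upgrade this to equality~(\ref{eq:div2}) when $\dim X \le 6$, the strategy is to show that $d_3^\alpha$ and $d_5^\alpha$ are the only differentials out of $\Eoh_*^{0,0}$ that can be nonzero. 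The target of $d_r^\alpha$ on this column is a subquotient of $\Hoh^r(X, \ZZ((-r+1)/2))$; when $r$ is even the coefficient sheaf vanishes because $-r+1$ is odd (this disposes of $d_4^\alpha$ and $d_6^\alpha$), and when $r \ge 7$ the ambient cohomology group itself vanishes by the dimension hypothesis. Consequently $\Eoh_\infty^{0,0} = \pert(\alpha)\ord(G(\alpha))\ZZ$, and Proposition~\ref{p:indKasPermCyc} converts this into~(\ref{eq:div2}).

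The only nontrivial point is the vanishing argument that yields~(3); once the coefficient-sheaf parity check and the dimensional vanishing are noted, the column $\Eoh_*^{0,0}$ has stabilized by page $6$, which is precisely what makes the hypothesis $\dim X \le 6$ sharp for this part of the argument. Everything else is a formal consequence of the module structure over the untwisted spectral sequence and the functoriality and $\ZZ$-linearity of the differentials.
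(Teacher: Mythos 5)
Your proposal is correct and follows essentially the same route as the paper: naturality is read off from the functoriality of the twisted Atiyah--Hirzebruch spectral sequence, the divisibility comes from $\indt(\alpha)$ being a permanent cycle in $\Eoh^{0,0}_*$ via Proposition~\ref{p:indKasPermCyc}, and the equality for $\dim X \le 6$ comes from observing that no differentials beyond $d_5^\alpha$ can be nonzero out of $\Eoh^{0,0}_*$. You merely spell out the parity-of-coefficient-sheaf and dimensional-vanishing checks that the paper's proof leaves implicit.
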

\begin{proof}
  Relation \eqref{eq:nat} follows immediately from the functoriality of the Atiyah--Hirzebruch spectral sequence.
  If $X$ is a finite CW complex, $\indt(\alpha)$ is a generator of the subgroup of permanent cocycles in $\Eoh^{0,0}_\infty$ by Proposition
  \ref{p:indKasPermCyc}. The subgroup generated by $\pert(\alpha)\ord(G(\alpha))$ in $\Eoh_2^{0,0}$ is the subgroup $\Eoh_6^{0,0}$, from
  which \eqref{eq:div1} follows. If $X$ has dimension no greater than $6$,
  there are no further nonzero differentials emanating from $\Eoh_i^{0,0}$, and so $\Eoh_6^{0,0} = \Eoh_{\infty}^{0,0}$, which establishes \eqref{eq:div2}.
\end{proof}

\section{The Low-Degree Cohomology of $\BPU_n$}\label{sec:cohbpu}

The previous section presented the construction of an operation, $G$. The next three sections set forth a procedure for determining the subgroup
of $\Hoh^5(X, \ZZ)/ ( \Hoh^2(X, \ZZ) \smile \alpha)$ generated by $G(\alpha)$ and in particular for calculating $\ord(G(\alpha))$. 

The method originates in the obstruction-theory of $\BPU_n$. For a finite CW complex $X$ and for any $\alpha \in \Br'(X)$ there exists some
$n$ such that the class $\alpha$ is in the image of a map $\Hoh^3(\BPU_n, \ZZ) \to \Hoh^3(X, \ZZ)$. As a consequence, determining the
subgroup generated by $G$ on the spaces $\BPU_n$ collectively will suffice to determine the subgroup generated by $G$ on all connected CW
complexes.

To avoid trivialities, we shall assume throughout that $n$ is an integer greater than $1$. In this section and henceforth, the notation $\epsilon(n)$
will be used to denote $n\gcd(2,n)$.

This section in particular is devoted to establishing the cohomology $\Hoh^*(\BPU_n, \ZZ)$ in low degrees. We show that
\begin{align*}
  \Hoh^i(\BPU_n, \ZZ) & = 0, \quad i \le 2  & \Hoh^3(\BPU_n, \ZZ) & = \ZZ/n \\
 \Hoh^4( \BPU_n, \ZZ)  & = \ZZ & \Hoh^5(\BPU_n, \ZZ) & = 0.
\end{align*}
Among these, the results for $\Hoh^i(\BPU_n, \ZZ)$ where $i \le 3$ are trivial, given the calculation of $\Hoh^*(\PU_n, \ZZ)$ of
\cite{baum-browder}. The fact $\Hoh^4(\BPU_n, \ZZ) = \ZZ$ will appear in passing, but it is showing $\Hoh^5(\BPU_n, \ZZ)= 0$ that will be of
greatest significance, and this is what occupies our attention.

\begin{proposition}\label{prop:cohbpun}
   $\Hoh^5(\BPU_n, \ZZ) = 0$.
\end{proposition}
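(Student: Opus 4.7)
The plan is to compute this via the Serre spectral sequence of the fiber sequence
\[
\BU_n \to \BPU_n \to K(\ZZ, 3)
\]
coming from the central extension $1 \to S^1 \to \U_n \to \PU_n \to 1$. The $E_2$-page is $\Eoh_2^{p, q} = \Hoh^p(K(\ZZ, 3); \Hoh^q(\BU_n; \ZZ))$ and converges to $\Hoh^{p+q}(\BPU_n; \ZZ)$. Since $\Hoh^*(\BU_n; \ZZ) = \ZZ[c_1, \ldots, c_n]$ is concentrated in even $q$, and $\Hoh^*(K(\ZZ, 3); \ZZ)$ in low degrees is $\ZZ, 0, 0, \ZZ\iota_3, 0, 0, (\ZZ/2)\iota_3^2$, the only nonvanishing $E_2$-term in total degree $5$ is $\Eoh_2^{3, 2} = \ZZ \cdot \iota_3 c_1$. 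It therefore suffices to show this group dies in the spectral sequence.

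The transgression is $d_3(c_1) = n\iota_3$: this follows from naturality applied to the map of fibrations from the path-loop fibration $BS^1 \to \ast \to K(\ZZ, 3)$ (where $x \mapsto \iota_3$) to our fibration, using that the center inclusion $BS^1 \to \BU_n$ pulls $c_1$ back to $n x$; equivalently it encodes the fact that the edge map $\Hoh^3(K(\ZZ, 3); \ZZ) \to \Hoh^3(\BPU_n; \ZZ) = \ZZ/n$ is reduction modulo $n$. The Leibniz rule then gives
\[
d_3(\iota_3 c_1) = n\iota_3^2 \in \Hoh^6(K(\ZZ, 3); \ZZ) = \ZZ/2,
\]
so the kernel of $d_3$ on $\Eoh_3^{3, 2}$ is $2\ZZ \cdot \iota_3 c_1$ when $n$ is odd and all of $\ZZ \cdot \iota_3 c_1$ when $n$ is even. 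To match this kernel, I compute the image of $d_3 : \Eoh_3^{0, 4} = \ZZ c_1^2 \oplus \ZZ c_2 \to \Eoh_3^{3, 2}$: by Leibniz, $d_3(c_1^2) = 2n \iota_3 c_1$, so the key remaining input is $d_3(c_2)$.

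I would pin down $d_3(c_2) = (n-1)\iota_3 c_1$ from two observations. First, since $\Eoh_2^{p, 4-p} = 0$ for $p \geq 1$, the edge map identifies $\Hoh^4(\BPU_n; \ZZ)$ with $\Eoh_\infty^{0, 4} = \ker d_3$, a rank-one subgroup of $\Hoh^4(\BU_n; \ZZ)$ (in particular $\Hoh^4(\BPU_n; \ZZ) = \ZZ$, as the statement records). Second, the generator of this kernel must be invariant in $\Hoh^4(\BU_n; \ZZ)$ under the line-bundle twisting action coming from the $BS^1$-principal bundle $\BU_n \to \BPU_n$; using the Chern class twist formula
\[
c_2(Q \otimes L) = c_2(Q) + (n-1) c_1(Q) c_1(L) + \binom{n}{2} c_1(L)^2,
\]
invariance of $A c_1^2 + B c_2$ forces $2An + B(n-1) = 0$, and the primitive solution pins down $d_3(c_2) = (n-1)\iota_3 c_1$.

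Consequently $\im(d_3|_{\Eoh_3^{0, 4}}) = \gcd(2n, n-1)\ZZ \cdot \iota_3 c_1 = \gcd(2, n-1)\ZZ \cdot \iota_3 c_1$, which equals $2\ZZ \cdot \iota_3 c_1$ for $n$ odd (since $n-1$ is even) and $\ZZ \cdot \iota_3 c_1$ for $n$ even. In both parities the image exactly cancels the kernel, so $\Eoh_4^{3, 2} = 0$. No higher differentials can involve this position, hence $\Eoh_\infty^{3, 2} = 0$ and $\Hoh^5(\BPU_n; \ZZ) = 0$. The main obstacle is the identification of $d_3(c_2)$; once this is in hand via the Chern class twist formula together with the parallel computation of $\Hoh^4(\BPU_n; \ZZ) = \ZZ$, the remainder of the argument is routine bookkeeping.
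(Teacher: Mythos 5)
Your argument is correct and takes a genuinely different route from the paper. The paper computes $\Hoh^5(\BPU_n,\ZZ)$ via the Serre spectral sequence of $\PU_n \to \Erm\PU_n \to \BPU_n$, which converges to a point; the key external inputs are Baum--Browder's computation of $\Hoh^*(\PU_n,\ZZ)$ and Woodward's Lemma~\ref{l:woodward} (itself obtained from the spectral sequence of $\Brm S^1 \to \BU_n \to \BPU_n$). You instead deloop once more and run the Serre spectral sequence of $\BU_n \to \BPU_n \to K(\ZZ,3)$: the only contribution in total degree $5$ is $\Eoh_2^{3,2}=\ZZ\cdot\iota_3 c_1$, and you show it dies on the $\Eoh_3$-page from both sides. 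Your inputs are $\Hoh^{\le 6}(K(\ZZ,3),\ZZ)$ — in particular that $\iota_3^2$ generates $\Hoh^6(K(\ZZ,3),\ZZ)\iso\ZZ/2$, which feeds the Leibniz computation $d_3(\iota_3 c_1)=\pm n\,\iota_3^2$ — together with the Chern-class twist formula for $c_2(Q\otimes L)$, used to determine $d_3(c_2)$. That twist computation carries essentially the same information as Woodward's Lemma, as both identify the primitive line-bundle-invariant class in $\Hoh^4(\BU_n,\ZZ)$, but your packaging avoids both the $\PU_n$-fiber spectral sequence and the Baum--Browder cohomology of $\PU_n$, at the cost of needing the cohomology of $K(\ZZ,3)$ through degree $6$. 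One small point worth making explicit: the invariance argument pins down $d_3(c_2)=\pm(n-1)\iota_3 c_1$ only because $\ker(d_3|_{\Eoh_3^{0,4}})$ and the invariant sublattice of $\Hoh^4(\BU_n,\ZZ)$ are both of rank one and therefore span the same $\QQ$-line; invariance by itself gives only containment of the former in the latter, so the rank count is doing real work there.
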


This statement can be extracted from either~\cite{kameko-yagita} or~\cite{vistoli} for $\BPU_p$ when $p$ is an odd
prime. The special case of $\BPU_2$ follows from~\cite{brown} and the exceptional
isomorphism $\PU_2\iso\SO_3$. We give a proof which works for all $n$.

To prove the proposition, we need several lemmas.  Recall~\cite{milnor-stasheff} that the ring $\Hoh^*(\BU_n,\ZZ)$ is a polynomial algebra
in Chern classes $c_1, c_2, \dots, c_n$, with $|c_i| = 2i$; recall also that $\Hoh^*(\U_n, \ZZ)$ is the exterior algebra $\Lambda_\ZZ(y_1,
\dots, y_n)$ with $|y_i| = 2i-1$.  Write $\rho: \U_n \to \PU_n$ for the quotient map and $\Brm\rho:\BU_n\rightarrow\BPU_n$ for the
associated map on classifying spaces.

\begin{lemma}[Woodward~\cite{woodward}\footnote{The reader is warned that there is a slight mistake in Woodward's paper, explained and corrected in
    our note~\cite{aw5}. The error does not impinge on the present work.}]
    \label{l:woodward} If $n\geq 2$ is odd, then $\Brm\rho^*$ identifies
  $\Hoh^4(\BPU_n, \ZZ)$ with the subgroup of $\Hoh^4(\BU_n,\ZZ)$ generated by $\frac{n-1}{2}c_1^2 -n c_2$. If $n$ is even, then $\Brm\rho^*$
  identifies $\Hoh^4(\BPU_n,\ZZ)$ with the subgroup of $\Hoh^4(\BU_n,\ZZ)$ generated by $(n-1)c_1^2 - 2n c_2$.
\end{lemma}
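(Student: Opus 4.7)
The approach is to analyze the Serre spectral sequence of the fibration $\B S^1 \to \BU_n \to \BPU_n$ arising from the central extension $1 \to S^1 \to \U_n \to \PU_n \to 1$, and to extract $\Brm\rho^*$ on $\Hoh^4$ from the resulting filtration on $\Hoh^4(\BU_n)$.

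First I would set up the $E_2$ page. Using $\Hoh^*(\B S^1, \ZZ) = \ZZ[u]$ with $|u| = 2$, the known vanishing $\Hoh^{\le 2}(\BPU_n, \ZZ) = 0$, and $\Hoh^3(\BPU_n, \ZZ) = \ZZ/n$ with generator $\alpha$, the nonzero $E_2$-entries in total degrees at most $4$ are $E_2^{0, 2k} = \ZZ u^k$, $E_2^{3, 2} = (\ZZ/n)\alpha u$, and $E_2^{4, 0} = \Hoh^4(\BPU_n)$. The transgression is $d_3(u) = \alpha$, since the principal $\B S^1$-bundle $\BU_n \to \BPU_n$ is classified by $\alpha$; Leibniz then gives $d_3(u^2) = 2u\alpha$. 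Bidegree inspection shows that $E_r^{4, 0}$ supports no nonzero differentials either in or out (the potential sources and targets vanish for $\Hoh^{\le 2}(\BPU_n)$ or $\Hoh^{\mathrm{odd}}(\B S^1)$ reasons), so $E_\infty^{4, 0} = \Hoh^4(\BPU_n)$. Combined with $E_2^{1, 3} = E_2^{2, 2} = E_2^{3, 1} = 0$, the filtration on $\Hoh^4(\BU_n)$ collapses to a short exact sequence
$$ 0 \to \Hoh^4(\BPU_n) \xrightarrow{\Brm\rho^*} \Hoh^4(\BU_n) \xrightarrow{\mathrm{res}} E_\infty^{0, 4} \to 0, $$
with $\mathrm{res}$ the edge restriction to the fiber $\B S^1$.

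The restriction map is elementary to evaluate: the fiber inclusion is induced by the scalar map $z \mapsto \mathrm{diag}(z, \ldots, z)$, under which the total Chern class pulls back to $(1 + u)^n$, giving $c_1^2 \mapsto n^2 u^2$ and $c_2 \mapsto \tfrac{n(n-1)}{2} u^2$. Hence $\im(\mathrm{res}) = \gcd\bigl(n^2,\, \tfrac{n(n-1)}{2}\bigr)\, \ZZ u^2 = \tfrac{n}{\gcd(2, n)}\, \ZZ u^2$, which coincides with the $E_4$-calculation $E_4^{0, 4} = \ker d_3$. Therefore $E_\infty^{0, 4} = E_4^{0, 4}$, and the argument never needs to touch $\Hoh^5(\BPU_n)$ or any higher differentials.

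The lemma now reduces to a rank-one linear-algebra problem: find the generator of the kernel of the map $(a, b) \mapsto a n^2 + b\, \tfrac{n(n-1)}{2}$ on $\ZZ c_1^2 \oplus \ZZ c_2$. Using the coprimalities $\gcd(n, \tfrac{n-1}{2}) = 1$ for $n$ odd and $\gcd(2n, n-1) = 1$ for $n$ even, the kernel is cyclic, generated by $\tfrac{n-1}{2} c_1^2 - n c_2$ or $(n-1) c_1^2 - 2n c_2$ respectively. The only somewhat delicate step is identifying the transgression $d_3(u) = \alpha$, which is standard for the Serre spectral sequence of a $K(\ZZ, 2)$-principal bundle classified by $\alpha$; everything else is bookkeeping.
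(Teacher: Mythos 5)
Your proof is correct and follows essentially the same route as the paper's: both use the Serre spectral sequence of $\Brm S^1 \to \BU_n \to \BPU_n$ and reduce the lemma to identifying $\Brm\rho^*$ with the kernel of the restriction to the fiber (pullback along the diagonal $\Brm\Delta$), whose effect on $c_1, c_2$ is read off from the total Chern class $(1+u)^n$. The paper extracts the short exact sequence directly from the low-degree exact sequence of the Serre spectral sequence and never needs to invoke the transgression $d_3(u)=\alpha$ or compare $\im(\mathrm{res})$ with $E_4^{0,4}$; those extra steps in your write-up are harmless but redundant, since exactness already follows from the vanishing of the intermediate $E_2$-terms.
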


The proof is included for the sake of completeness.

\begin{proof}
  There is a fiber sequence $\Brm S^1\to \BU_n \to \BPU_n$, where the map $\Brm\Delta: \Brm S^1 \to \BU_n$ is that induced by the
  diagonal. Since $\tilde \Hoh^i(\BPU_n, \ZZ)=0$ for $i \le 2$, and since $\Hoh^*(\Brm S^1, \ZZ)= \ZZ[t]$ where $|t|=2$, the exact
  sequence of low-dimensional terms in the Serre spectral sequence for $\Brm S^1\rightarrow\BU_n\rightarrow\BPU_n$ contains
    \begin{equation*}
      0 =\Hoh^3(\Brm S^1 , \ZZ) \to \Hoh^4(\BPU_n, \ZZ) \overset{\Brm \rho^*}{\longrightarrow}
      \Hoh^4(\BU_n, \ZZ) \overset{\Brm \Delta^*}{\longrightarrow} \Hoh^4(\Brm S^1, \ZZ).
    \end{equation*}
    The map $\Brm \rho^*$ is the inclusion $\ker \Brm \Delta^* \to \Hoh^4(\BU_n, \ZZ)$.

    The pull-back along $\Delta$ of the universal bundle $\EU_n \to \BU_n$ has total Chern class $(1+t)^n$, from which it follows that $\Brm
    \Delta^*(c_1) = nt$, implying that $\Brm \Delta^*(c_1^2) = n^2t^2$ and $\Brm\Delta^*(c_2) = \binom{n}{2} t^2$. The description of
    $\ker\Brm\Delta^* = \Hoh^4(\BPU_n, \ZZ)$ is now elementary.
\end{proof}

Write $d$ for the unique class in $\Hoh^4(\BPU_n,\ZZ)$ such that
$\Brm\rho^*(d)=\frac{n-1}{2}c_1^2-nc_2$ when $n$ is odd, or $\Brm\rho^*(d)=(n-1)c_1^2-2nc_2$
when $n$ is even.

\begin{lemma}\label{l:cohSusUn}
  In the cohomology Serre spectral sequence for $\U_n\rightarrow\Erm\U_n\rightarrow\BU_n$  one has
    \begin{align*}
      &  d_2(y_1)    =c_1, && d_3(y_3)=0,\\
      &  d_2(y_3)    =0, &&    d_4(y_3)=c_2\,(\mathrm{mod}\, {c_1^2}).
    \end{align*}
    up to a change of generators
    \begin{proof}
          This follows immediately from the contractibility of $\Erm \U_n$.
    \end{proof}
\end{lemma}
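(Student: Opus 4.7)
The plan is to use the contractibility of $\Erm\U_n$, which forces $\Eoh_\infty^{p,q} = 0$ outside $(p,q) = (0,0)$, together with the multiplicative structure of the Serre spectral sequence, and then track low-degree differentials degree by degree.

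First I identify the $\Eoh_2$-page. Since both $\Hoh^*(\BU_n,\ZZ) = \ZZ[c_1,\ldots,c_n]$ and $\Hoh^*(\U_n,\ZZ) = \Lambda_\ZZ(y_1, \ldots, y_n)$ are torsion-free, the K\"unneth formula gives $\Eoh_2^{p,q} = \Hoh^p(\BU_n,\ZZ) \otimes \Hoh^q(\U_n,\ZZ)$, and the spectral sequence is multiplicative. Note in particular that $\Hoh^{\mathrm{odd}}(\BU_n,\ZZ) = 0$ and $\Hoh^2(\U_n,\ZZ) = 0$, which rules out many potential differentials purely for degree reasons.

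For $d_2(y_1)$: in total degree $1$, only $\Eoh_2^{0,1} = \ZZ\{y_1\}$ is nontrivial, and the only possible target $\Eoh_r^{r, 2-r}$ that is nonzero with $r\geq 2$ is $\Eoh_2^{2,0} = \ZZ\{c_1\}$ (since $\Hoh^1(\BU_n) = 0$ and $\Hoh^2(\U_n) = 0$). For $y_1$ to be killed in $\Eoh_\infty$, one needs $d_2(y_1) = \pm c_1$; replacing $y_1$ by $-y_1$ if necessary yields $d_2(y_1) = c_1$.

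For $y_3$: I verify immediately that $d_2(y_3) \in \Eoh_2^{2,2} = \Hoh^2(\BU_n) \otimes \Hoh^2(\U_n) = 0$ and that $d_3(y_3) \in \Eoh_3^{3,1}$ is a subquotient of $\Eoh_2^{3,1} = 0$, so the first potentially nonzero differential on $y_3$ is $d_4 : \Eoh_4^{0,3} \to \Eoh_4^{4,0}$. To compute $\Eoh_4^{4,0}$, observe that $\Eoh_2^{4,0} = \ZZ\{c_1^2, c_2\}$ and, by the Leibniz rule, $d_2(c_1 y_1) = c_1 \cdot d_2(y_1) = c_1^2$, so the image of $d_2$ into $(4,0)$ is $\ZZ\{c_1^2\}$; since $\Eoh_3^{1,2} = 0$, nothing else contributes, giving $\Eoh_4^{4,0} = \ZZ\{c_2\}$ modulo $c_1^2$. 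No $d_r$ with $r\geq 5$ can hit $(4,0)$ (the source would lie in the fourth quadrant), and $(4,0)$ cannot support a differential on any page (the target has negative $q$), so the class $c_2 \pmod{c_1^2}$ must die via $d_4(y_3)$. Thus $d_4(y_3) = \pm c_2 \pmod{c_1^2}$, and again the sign can be absorbed by choice of generator.

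The hard part is the bookkeeping in the final step, namely verifying exactly which differentials land in or emanate from the relevant bidegrees in $(0,3)$ and $(4,0)$ on each page. This is made tractable by the two vanishing facts $\Hoh^{\mathrm{odd}}(\BU_n, \ZZ) = 0$ and $\Hoh^2(\U_n, \ZZ) = 0$, which eliminate most of the potential contributions before any detailed analysis is needed.
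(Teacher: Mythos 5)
Your proof is correct and takes the same approach the paper intends: the paper's one-line proof simply asserts that everything follows from the contractibility of $\Erm\U_n$, and you have supplied the page-by-page bookkeeping, the multiplicative structure, and the degree-counting that make that assertion precise. (One cosmetic slip: the source of a $d_r$, $r\geq 5$, into bidegree $(4,0)$ would sit in the second quadrant, not the fourth, but the conclusion that it vanishes is unaffected.)
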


\begin{lemma} \label{l:cohoPUn}
    The low-degree cohomology of $\Hoh^*(\PU_n,\ZZ)$ is
    \begin{align*}
       & \Hoh^1(\PU_n,\ZZ)   =0, &&  \Hoh^3(\PU_n,\ZZ)   =\ZZ\cdot y,\\
       &  \Hoh^2(\PU_n,\ZZ)   =\ZZ/n\cdot x, &&  \Hoh^4(\PU_n,\ZZ)   =\ZZ/m\cdot x^2, 
   \end{align*}
    where $m=n$ if $n$ is odd and $m=n/2$ if $n$ is even. The class $y$ is uniquely
    determined by the requirement that $\rho^*(y)=y_3$ when $n$ is odd, $\rho^*(y)=2y_3$
    when $n$ is even.
    \begin{proof}
        See~\cite{baum-browder}*{Section 4}.
    \end{proof}
\end{lemma}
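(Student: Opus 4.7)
The plan is to analyze the Gysin sequence of the principal circle bundle $S^1\to\U_n\to\PU_n$, in which $S^1$ embeds in $\U_n$ as the center. Writing $x\in\Hoh^2(\PU_n,\ZZ)$ for its Euler class, the sequence reads
\[ \cdots \to \Hoh^{i-2}(\PU_n,\ZZ)\xrightarrow{\smile x}\Hoh^i(\PU_n,\ZZ)\xrightarrow{\rho^*}\Hoh^i(\U_n,\ZZ)\xrightarrow{\pi_*}\Hoh^{i-1}(\PU_n,\ZZ)\to\cdots, \]
and we take as input the known exterior algebra $\Hoh^*(\U_n,\ZZ)=\Lambda_{\ZZ}(y_1,y_3,\dots,y_{2n-1})$. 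Since $\det\colon\U_n\to S^1$ restricts on the central $S^1\subset\U_n$ to the $n$th power map, the homotopy long exact sequence shows $\pi_1\PU_n=\ZZ/n$; in particular $\Hoh^1(\PU_n,\ZZ)=0$ and the Gysin integration sends $y_1\mapsto n$. Feeding these facts into the Gysin sequence and using $\Hoh^2(\U_n,\ZZ)=0$ gives $\Hoh^2(\PU_n,\ZZ)=\ZZ/n\cdot x$ at once.

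Continuing one degree higher yields the exact sequence
\[ 0\to\Hoh^3(\PU_n,\ZZ)\xrightarrow{\rho^*}\ZZ\cdot y_3\xrightarrow{\pi_*}\ZZ/n\cdot x\xrightarrow{\smile x}\Hoh^4(\PU_n,\ZZ)\xrightarrow{\rho^*}\ZZ\cdot y_1y_3\xrightarrow{\pi_*}\Hoh^3(\PU_n,\ZZ). \]
Thus the entire lemma reduces to computing $\pi_*(y_3)\in\ZZ/n\cdot x$, and this will be the main obstacle. I would attack it by restriction to a maximal torus $T=(S^1)^n\subset\U_n$, with image $T'=T/S^1\subset\PU_n$: the class $y_3$ restricts, via the splitting principle and Lemma~\ref{l:cohSusUn}, to an explicit degree-$3$ symmetric polynomial in the generators of $\Hoh^1(T)$, and the Gysin integration along the circle bundle $T\to T'$ admits an explicit formula. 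The answer will be controlled by the parity of $\binom{n}{2}$, paralleling the role of this coefficient in the proof of Lemma~\ref{l:woodward}: one finds $\pi_*(y_3)=0$ when $n$ is odd and $\pi_*(y_3)=(n/2)x$ when $n$ is even. Hence $\Hoh^3(\PU_n,\ZZ)=\ker\pi_*=\ZZ\cdot y$ with $\rho^*(y)=y_3$ for $n$ odd and $\rho^*(y)=2y_3$ for $n$ even.

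Finally, for $\Hoh^4$ I would apply the projection formula $\pi_*(\rho^*(a)\smile b)=a\smile\pi_*(b)$ to $a=y$ and $b=y_1$. Using $\rho^*(y)=ky_3$ with $k\in\{1,2\}$, this yields $\pi_*(y_1y_3)=\pm(n/k)y$, which is nonzero; hence $\pi_*$ is injective on $\Hoh^4(\U_n,\ZZ)$, $\rho^*$ vanishes on $\Hoh^4(\PU_n,\ZZ)$, and the map $\smile x$ is surjective. Its kernel equals the image of $\pi_*\colon\ZZ\cdot y_3\to\ZZ/n\cdot x$, which is $0$ for $n$ odd and the unique subgroup of order $2$ in $\ZZ/n$ for $n$ even. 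Therefore $\Hoh^4(\PU_n,\ZZ)=\ZZ/m\cdot x^2$ with $m=n$ if $n$ is odd and $m=n/2$ if $n$ is even, completing the proof.
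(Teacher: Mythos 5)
Your Gysin-sequence scaffolding is sound, and in fact more detailed than the paper, whose entire proof is a citation of Baum--Browder, Section 4: the computation of $\Hoh^1$ and $\Hoh^2$, the reduction of the whole lemma to the single value $\pi_*(y_3)\in\ZZ/n\cdot x$, and the projection-formula argument that recovers $\Hoh^4$ once $\rho^*(y)$ is known are all correct, and the value you assert for $\pi_*(y_3)$ (zero for $n$ odd, $(n/2)x$ for $n$ even) is the true one.

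The gap is that the maximal-torus method you propose for that key step cannot establish it. First, $y_3$ restricts to zero on the maximal torus $T\subset\U_n$: conjugation by elements of the normalizer of $T$ is homotopic to the identity on $\U_n$, so the image of $\Hoh^3(\U_n,\ZZ)\to\Hoh^3(T,\ZZ)=\Lambda^3(\ZZ^n)$ lies in the $S_n$-invariants, which vanish (a transposition interchanging two of the indices of $u_iu_ju_k$ acts by $-1$); so there is no ``explicit degree-$3$ symmetric polynomial'' to integrate. Second, and more decisively, the class you want to detect lies in the torsion group $\Hoh^2(\PU_n,\ZZ)\cong\ZZ/n$, while $\Hoh^2(T',\ZZ)$ is torsion-free, so restriction along $T'\to\PU_n$ kills every candidate value; the compatibility of fiber integration with the pullback square relating $T\to T'$ and $\U_n\to\PU_n$ then only returns the vacuous identity $0=0$. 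Thus the factor of $2$ separating the even and odd cases --- the entire content of the lemma --- is not proved. A workable repair in the same spirit: extend the fiber sequence to $\U_n\to\PU_n\to\Brm S^1$ and note that this fibration is pulled back from $\U_n\to\Erm\U_n\to\BU_n$ along the central map $\Brm S^1\to\BU_n$; by Lemma~\ref{l:cohSusUn} and the Chern-class computation in the proof of Lemma~\ref{l:woodward} one gets $d_2(y_1)=nt$ and $d_4(y_3)=\binom{n}{2}t^2$ modulo $nt^2$, and since $\binom{n}{2}\equiv 0$ or $n/2\pmod n$ according to the parity of $n$, reading off the $E_\infty$-page (with $\rho^*$ as the edge map to the fiber) gives exactly the stated $\Hoh^3$, $\rho^*(y)$, and $\Hoh^4=\ZZ/m\cdot x^2$; this is where the parity of $\binom{n}{2}$ honestly enters, rather than through the torus.
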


% This can be worked out directly from the Serre spectral sequence for the fibration
% $S^1\xrightarrow{\Delta}\U_n\rightarrow\PU_n$.

\begin{proof}[Proof of Proposition]
  The integer $m$ is defined as in Lemma~\ref{l:cohoPUn}.

    The lower-left corner of the Serre spectral sequence associated with $\PU_n \to \Erm \PU_n \to \BPU_n$ is displayed in Figure \ref{fig:ssspun}.
    \begin{figure}[h]
        \centering
        \begin{equation*}    \xymatrix@R=8px@C=8px{ (\ZZ/m)\cdot x^2 \ar@{^(->}^{d_3}[ddrrr] \\
            \ZZ\cdot y \ar@{->>}^{d_2}[drr] \ar@/_10px/^{d_4}[dddrrrr] \\ (\ZZ/n)\cdot x
            \ar@/_10px/^{\iso}_{d_3}[ddrrr] & & \ZZ/n & \ZZ/n \\ 0 \\ \ZZ & 0 & 0  & \Hoh^3(\BPU_n, \ZZ) &
              \Hoh^4(\BPU_n, \ZZ) & \Hoh^5(\BPU_n,\ZZ) }
        \end{equation*}
        \caption{The $\Eoh_3$-page of the Serre spectral sequence associated with $\PU_n \to \Erm \PU_n \to \BPU_n$.}
        \label{fig:ssspun}
    \end{figure}
    Since the spectral sequence converges to the cohomology of the contractible space $\Erm\PU_n$ we can deduce a great deal about the terms and
    differentials. In the first place, the $d_3$-differential $d_3 :(\ZZ/n)\cdot x \to
    \Hoh^3(\BPU_n, \ZZ)$ is an isomorphism.
    
    In the second place we show that the map labeled $d_2$ in Figure \ref{fig:ssspun} is surjective. We refer to this map simply as $d_2$. We deduce the existence of a commutative square
   \begin{equation}
      \label{eq:2}
      \xymatrix{ \ker (d_2) \ar^{d_4}@{->>}[d] \ar@{^(->}[r] & \ZZ\cdot y \ar^{\rho^*}[r] & \ZZ\cdot y_3 \ar^{d_4}[d] \\
        \Hoh^4 ( \BPU_n, \ZZ) \ar[rr]& & \dfrac{\Hoh^4(\B\U_n, \ZZ)}{ \ZZ \cdot c_1^2} \iso \ZZ\cdot c_2}
    \end{equation}
    where the vertical map on the right is that of the Serre spectral sequence associated to the fiber sequence $\U_n \to \Erm \U_n \to
    \BU_n$. The subgroup $\ker(d_2)$ is generated by an element $by$ such that that $d_4(by)$ is a generator of $\Hoh^4(\BPU_n,\ZZ)$;
    following the arrows counterclockwise and using Lemma \ref{l:woodward}, we see that $by$ has image $\pm \epsilon(n)c_2$ in $\ZZ\cdot
    c_2$. Following the arrows clockwise, and using Lemmas \ref{l:cohSusUn} and \ref{l:cohoPUn}, we see that $by$ has image $\pm bc_2$ in $\ZZ
    \cdot c_2$ when $n$ is odd and image $ \pm 2bc_2$ when $n$ is even. In particular, it follows that $b= \pm n$. Since $\ker(d_2) = n\ZZ\cdot y$,
    we deduce that $d_2$ is surjective.

    In the third place, $d_3(x^2) = 2 x d_3(x)$, so that $a x^2$ is in the kernel of the $d_3$-differential if and only if $2a \equiv 0
    \!\!\pmod{n}$, which is to say that $a \equiv 0 \!\!\pmod{m}$ and so the $d_3$-differential is injective on $\ZZ/m \cdot x^2$. One may now check
    directly that there are no nonzero groups which can support a differential $d_i : A \to \Hoh^5(\BPU_n, \ZZ)$, so the group
    $\Hoh^5(\BPU_n,\ZZ)$ consists of permanent cocycles, and since $\Erm\PU_n$ is contractible it that follows that $\Hoh^5(\BPU_n, \ZZ) = 0$.
\end{proof}

\section{The Obstruction Theory of $\BPU_n$}\label{sec:obstrbpu}

This section takes up the work of the previous one.

By definition, the topological index $\ind_\topo(\alpha)$ is the greatest common divisor of the integers $n$ such that $\alpha
\in \Hoh^3(X, \ZZ) = \Hoh^2(X , S^1)$ is in the image of the connecting homomorphism $\Hoh^1( X, \PU_n) \to \Hoh^2(X, S^1)$. In homotopical
terms, it is the greatest common divisor of the integers $n$ such that a lift exists in the diagram
\begin{equation}
  \label{eq:5}
  \xymatrix{ & \BPU_n \ar[d] \\ X \ar^\alpha[r] \ar@{-->}[ur] & K(\ZZ, 3). }
\end{equation}
Here the map $\BPU_n \to K(\ZZ, 3)$ is the application of the classifying-space functor to the map $\PU_n \to \Brm S^1$ which classifies
the quotient map $\U_n \to \PU_n$. In particular, the obstruction theory of the spaces $\BPU_n$ as $n$ varies should inform calculations of the topological index.

The low-degree part of a Postnikov tower of $\BPU_n$ takes the form
\[ \xymatrix{ K(\ZZ, 4) \ar[r] & \BPU_n[4] \ar[d] \\ & K(\ZZ/n, 2) \ar@{..>}^{f_n}[r] & K(\ZZ, 5) } \] where the dotted arrow, $f_n$, is the
classifying map of the fiber sequence represented by the other two arrows. Our main result here is the determination of $f_n$. Since $f_n:
K(\ZZ/n, 2) \to K(\ZZ, 5)$ is a class in the cohomology of an Eilenberg--MacLane space, it is a cohomology operation, and we may hope to
describe it in simple terms. In fact, we shall show that if $n$ is odd, then $f_n$ agrees, up to multiplication by a unit, with
$\beta_n(\iota^2)$, the Bockstein of the square of the tautological class. If $n$ is even, $f_n$ agrees, up to multiplication by a unit,
with $\beta_{2n}(P_2(\iota))$, where $P_2$ is an operation such that $2 P_2(\iota)$ agrees with the image of $\iota^2$ in
$\Hoh^4(K(\ZZ/n,2), \ZZ/2n)$.

In this section and hereafter we tacitly fix basepoints for all connected topological spaces appearing, and we shall write $\pi_i(X)$ in
lieu of $\pi_i(X, x)$. Once again, we assume that $n$ is an integer satisfying $n\ge 2$. If $A$ is an abelian group and $g \in A$ an
element, we shall write $\langle g \rangle$ for the cyclic subgroup of $A$ generated by $g$.

\subsection*{The homotopy groups of $\BPU_n$.}

Since the task of this section is to determine the low-dimensional obstruction theory of $\BPU_n$ for varying $n$, it will be necessary to describe the
homotopy groups of $\BPU_n$ and the maps between these groups induced by certain standard maps.

For future use, we define
\[ \Pc(n,rn) = \SU_{rn} / (\ZZ/n) \] 
where $\ZZ/n$ is viewed as a subgroup of $\ZZ/(rn)$, the center of $\SU_{rn}$, in the evident way. Note that $\Pc(n,n) = \PU_n$.
From the long exact sequence of a fibration, we deduce
\begin{equation*}
    \pi_i(\Pc(n,rn))\iso\begin{cases}
        \ZZ/n   &   \textrm{if $i=1$,}\\
        \pi_i(\SU_{rn}) &   \textrm{otherwise.}
    \end{cases}
  \end{equation*}
The following homotopy calculations follow directly from Bott periodicity,~\cite{bott}*{Theorem 5}.
\begin{equation*}
    \pi_i(\SU_n)\iso\begin{cases}
        0   &   \textrm{if $i<2n$ is even or $i=1$,}\\
        \ZZ &   \textrm{if $i<2n$ is odd and $i\neq 1$,}\\
        \ZZ/n! &  \textrm{if $i=2n$.}
    \end{cases}
\end{equation*}
Moreover, the standard inclusion maps $\SU_n\rightarrow\SU_{n+1}$ induce isomorphisms
\begin{equation*}
    \pi_i(\SU_n)\rightarrow \pi_i(\SU_{n+1})
\end{equation*}
whenever $i<2n$. The first two stages of the Postnikov tower of $\B\Pc(n,rn)$ take the form
\begin{equation}
  \label{eq:6}
  \xymatrix{ K(\ZZ, 4) \ar[r] & \B\Pc(n,rn)[4] \ar[d] \\ & \B\Pc(n,rn)[2] \simeq K(\ZZ/n , 2).}
\end{equation}
This is a principal $K(\ZZ, 4)$--bundle, classified by a map $K(\ZZ/n, 2) \to K(\ZZ, 5)$.

Two families of maps between the $\Pc(n,rn)$ are easily defined. In the first place, one may take a further quotient: $\Pc(n, \ell rn) \to
\Pc(rn, \ell rn)$. In the second, one may take the direct sum of $\ell$ copies of a matrix in $\SU_{rn}$, giving a map $\oplus^\ell:
\SU_{rn} \to \SU_{\ell rn}$. This map is compatible with the action of $\ZZ/n$ and therefore descends to a map $\oplus^\ell: \Pc(n,
rn) \to \Pc(n ,\ell rn)$.

\begin{lemma}\label{lem:homotopymaps}
  The map induced by reduction $\pi_2(\B \Pc(n, \ell rn)) \to \pi_2(\B \Pc( rn , \ell rn))$ is an inclusion $\ZZ/n \to \ZZ/(rn)$. On higher
  homotopy groups this map induces an isomorphism.
  
  The map $\pi_i(\B\Pc(n,rn)) \to \pi_i(\B\Pc(n, \ell rn))$ induced by $\ell$-fold direct summation
  \begin{enumerate}
  \item is an isomorphism $\pi_2(\B \Pc(n,rn)) \to \pi_2(\B\P(n,\ell rn))$ when $i=2$,
  \item is the composite of the stabilization map $\pi_i(\B \SU_{rn}) \to \pi_i(\B \SU_{\ell rn})$ and multiplication by $\ell$ when
    $i>2$.
  \end{enumerate}
\end{lemma}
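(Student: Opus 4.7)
The plan is to exploit the central extension
\[
1 \to \ZZ/n \to \SU_{rn} \to \Pc(n,rn) \to 1,
\]
which after applying the classifying-space functor yields a principal fibration
\[
\B\SU_{rn} \to \B\Pc(n,rn) \to K(\ZZ/n, 2).
\]
Since $\pi_i(\B\SU_{rn}) = \pi_{i-1}(\SU_{rn})$ vanishes for $i \in \{2,3\}$, the long exact sequence identifies $\pi_2(\B\Pc(n,rn))$ with $\ZZ/n$ via the connecting homomorphism, makes $\pi_3(\B\Pc(n,rn))$ vanish, and identifies $\pi_i(\B\Pc(n,rn))$ with $\pi_i(\B\SU_{rn})$ for $i \geq 4$.

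For the first assertion, I would observe that the quotient map $\Pc(n,\ell rn) \to \Pc(rn,\ell rn)$ fits into a map of central extensions in which the top group $\SU_{\ell rn}$ is fixed and the kernels are related by the inclusion $\ZZ/n \hookrightarrow \ZZ/(rn)$. The resulting map of principal fibrations is the identity on the fiber $\B\SU_{\ell rn}$ and is the map $K(\ZZ/n,2) \to K(\ZZ/(rn),2)$ induced by the inclusion of groups on the base; naturality of the long exact sequence then produces both claims at once.

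For the second assertion, the $\ell$-fold direct sum $\oplus^\ell : \SU_{rn} \to \SU_{\ell rn}$ sends the generator $\zeta I_{rn}$ of the central $\ZZ/n$ to $\zeta I_{\ell rn}$, so it is compatible with the identity map $\ZZ/n \to \ZZ/n$ on kernels. The resulting map of principal fibrations therefore induces the identity on the base $K(\ZZ/n,2)$, which gives the identity on $\pi_2$, and reduces the computation for $i \geq 4$ to understanding the induced map $\B(\oplus^\ell)_* : \pi_i(\B\SU_{rn}) \to \pi_i(\B\SU_{\ell rn})$.

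The main obstacle is this last computation. The plan is to factor $\oplus^\ell$ as the diagonal $\SU_{rn} \to (\SU_{rn})^\ell$ followed by the assembly homomorphism $(\SU_{rn})^\ell \to \SU_{\ell rn}$ sending $(A_1,\dots,A_\ell)$ to the block-diagonal matrix $A_1 \oplus \cdots \oplus A_\ell$. The assembly map equals the pointwise product, in the group $\SU_{\ell rn}$, of $\ell$ slot-embedding homomorphisms $\SU_{rn} \hookrightarrow \SU_{\ell rn}$, each of which differs from the standard stabilization by conjugation by an element of $\U_{\ell rn}$ and hence induces the stabilization on homotopy groups. Since the group operation on $\SU_{\ell rn}$ recovers the abelian-group structure on $\pi_i(\B\SU_{\ell rn})$, the composite $(\oplus^\ell)_*$ becomes $\ell$ times the stabilization, which is the desired conclusion.
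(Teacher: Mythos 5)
Your proof is correct and follows essentially the same route as the paper: the first claim via comparing long exact sequences of the principal fibrations $\B\SU_{\ell rn} \to \B\Pc(-, \ell rn) \to K(-,2)$, and the second by stabilizing and appealing to the Eckmann--Hilton argument to convert the $\ell$-fold block-diagonal map into $\ell$ times the stabilization on homotopy groups. You spell out the Eckmann--Hilton step (factoring through the diagonal and the assembly homomorphism, noting each slot embedding is conjugate to the standard inclusion) where the paper merely invokes it, which is a helpful clarification but not a different approach.
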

\begin{proof}
  The first statement is follows from the diagram of fiber sequences
  \begin{equation*}
    \xymatrix{ \ZZ/n \ar[r] \ar[d] & \SU_{\ell rn} \ar[r] \ar@{=}[d] & \Pc(n, \ell rn) \ar[d] \\ \ZZ/(rn) \ar[r] & \SU_{\ell rn} \ar[r] &
      \Pc(rn , \ell rn). }
  \end{equation*}

  The map induced by $\ell$-fold direct summation $\SU_{rn} \to \SU_{\ell rn}$ on homotopy groups is readily seen to be the composite of the map
  $\pi_i(\SU_{rn}) \to \pi_i(\SU_{\ell rn})$ induced by the standard inclusion $\SU_{rn} \to
  \SU_{\ell rn}$ with multiplication by $l$ on $\pi_i(\SU_{\ell rn})$,
  by an Eckmann--Hilton argument. The result for $\Pc(n,\ell rn)$ follows by comparison.
\end{proof}

We attend mainly to $\BPU_n = \B\Pc(n,n)$ in this section; some other spaces $\B \Pc(n,rn)$ appear later.

We shall use the notation $X[n]$ for the $(n+1)$-st coskeleton of a simple, connected CW complex, which is to say that
there is a natural map $X \to X[n]$ which induces an isomorphism on homotopy groups $\pi_i(\cdot)$ when $i\le n$, and such that $\pi_i(X[n])
= 0$ when $i > n$. Since the spaces $X$ are always simple in our treatment, the space $X[n]$ may be taken to be the total space of a
fibration appearing in the total space of a Postnikov tower converging to $X$.

\subsection*{The cohomology of $K(\ZZ/n, 2)$}

The Eilenberg--MacLane space $K(\ZZ/n, 2)$ appears as the first non-contractible term in the Postnikov tower of $\BPU_n$. Knowing the cohomology of $K(\ZZ/n,
2)$, and contrasting it with the cohomology of $\BPU_n$ which was calculated in the previous section, will allow us to determine the
extension $\BPU_n[4] \to K(\ZZ/n, 2)$. Describing what we need of $\Hoh^*(K(\ZZ/n, 2), \ZZ)$ is the purpose of this subsection. Since
$K(\ZZ/n, 2)$ is an Eilenberg--MacLane space, the cohomology classes arising may be presented as unstable cohomology operations, and we are
able to present the class that proves to be most significant subsequently, $Q$, in terms of the Bockstein and the cup-product structure on
cohomology.

Recall that we have defined $\epsilon(n)$ to be $n$ if $n$ is odd and $2n$ if $n$ is even.

 The first few integral cohomology groups of $K(\ZZ/n, 2)$ are
  \begin{equation*}
      \Hoh^1(K(\ZZ/n,2),\ZZ)=\Hoh^2(K(\ZZ/n,2),\ZZ)=\Hoh^4(K(\ZZ/n,2),\ZZ)=0,
  \end{equation*}
  and
  \begin{align*}
    \label{eq:5}
        &\Hoh^3(K(\ZZ/n,2),\ZZ) = \ZZ/n \cdot \beta, && \Hoh^5(K(\ZZ/n, 2), \ZZ) = \ZZ/\epsilon(n) \cdot Q_n.
  \end{align*}
  where $Q_n$ denotes either $\beta(\iota^2)$ if $n$ is odd, or a class such that $2Q_n = \beta(\iota^2)$ if $n$ is even.

  This calculation of cohomology may be deduced from the calculations of homology given in \cite{cartan} %and summarized in Appendix
  % \ref{sec:cohkzm2} 
  or directly from the Serre spectral sequence for $K(\ZZ/n,1)\rightarrow*\rightarrow K(\ZZ/n,2)$. In the case where $n$
  is even, the class $Q_n$ is $2n$-torsion and a generator of $\Hoh^5(K(\ZZ/n, 2), \ZZ)$, and therefore there is a unique class $P_2 \in
  \Hoh^4(K(\ZZ/n, 2), \ZZ/2n)$ with the property that $\beta_{2n} (P_2) = Q_n$. We call this class a \textit{Pontryagin square} of
  $\iota$, in extension of the ordinary usage of this term in the case when $n$ is a power of $2$.

  We adopt cohomology-operation notation for $Q_n$ and $P_2$, writing $Q_n(\xi)$ in place of $Q_n \circ \xi$ if $\xi \in \Hoh^2(X,\ZZ)$ for
  some $X$ and similarly for $P_2$. We shall often write $Q$ for $Q_n$ when the integer $n$ is clear from the context.

  If $n\ge 2$ is an even integer, and $\xi \in \Hoh^2(X, \ZZ/n)$ for some $X$, then $\xi^2$ is a class in $\Hoh^4(X, \ZZ/n)$, and $P_2(\xi)$
  is a class in $\Hoh^4(X, \ZZ/2n)$ with the property that $2P_2(\xi)$ agrees with the image of $\xi^2$ under the map $\Hoh^4(X, \ZZ/n) \to
  \Hoh^4(X, \ZZ/2n)$ induced by the inclusion $\ZZ/n \to \ZZ/2n$. In this case we have
  \[ Q(\xi) = \beta_{2n} (P_2(\xi)). \]
  In the case of $n$ odd, and $\xi \in \Hoh^2(X, \ZZ/n)$, we have the simpler identity
  \[ Q(\xi) = \beta_n(\xi^2). \]  

\subsection*{The identification of $f_n$.}
This subsection establishes the first stage of the Postnikov tower for $\BPU_n$. The map $f_n$ of the title is the classifying map of the
first nontrivial fibration that appears in that tower.

\begin{lemma} \label{l:PB} 
  The map $\BPU_n \to K(\ZZ, 3)$ of diagram \eqref{eq:5} can be factored as $\BPU_n \to \BPU_n[2] \simeq K(\ZZ/n, 2)$ and $\beta: K(\ZZ/n,
  2) \to K(\ZZ,3)$.
  \begin{proof}
    This follows from comparing the presentation of $\PU_n$ as the quotient of $\U_n$ by $S^1$ and its presentation as the quotient of
    $\SU_n$ by $\ZZ/n$.
  \end{proof}
\end{lemma}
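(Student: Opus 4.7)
The plan is to exploit the compatibility of the two central extensions
\begin{equation*}
1 \to S^1 \to \U_n \to \PU_n \to 1,
\qquad
1 \to \ZZ/n \to \SU_n \to \PU_n \to 1,
\end{equation*}
which fit together by the inclusion $\SU_n \hookrightarrow \U_n$ and the inclusion of $n$-th roots of unity $\ZZ/n \hookrightarrow S^1$. Applying the classifying space functor produces a commutative ladder of fiber sequences
\begin{equation*}
\xymatrix{ K(\ZZ/n,1) \ar[r] \ar[d] & \B\SU_n \ar[r] \ar[d] & \BPU_n \ar@{=}[d] \ar[r] & K(\ZZ/n,2) \ar[d] \\ K(\ZZ,2) \ar[r] & \BU_n \ar[r] & \BPU_n \ar[r] & K(\ZZ,3). }
\end{equation*}
The bottom row's classifying map is the one referred to in diagram \eqref{eq:5}, and the vertical map $K(\ZZ/n, 2) \to K(\ZZ, 3)$ is the $\B$ of $K(\ZZ/n,1) \to K(\ZZ,2)$, which represents the Bockstein $\beta$ associated with $0 \to \ZZ \xrightarrow{n} \ZZ \to \ZZ/n \to 0$.

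Next I would identify the top row's classifying map with the Postnikov section $\BPU_n \to \BPU_n[2]$. Because $\SU_n$ is $2$-connected (it is connected, simply connected, and has $\pi_2 = 0$ by Bott periodicity), the space $\B\SU_n$ is $3$-connected. The Serre long exact sequence of homotopy groups for the top fibration therefore forces $\BPU_n \to K(\ZZ/n,2)$ to induce isomorphisms on $\pi_i$ for $i \le 2$, so this map realizes $\BPU_n[2] \simeq K(\ZZ/n,2)$ together with the canonical truncation map. Combining this with the commutativity of the ladder above yields exactly the claimed factorization of $\BPU_n \to K(\ZZ,3)$ through $\beta$.

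There is essentially no technical obstacle; the only real point that needs care is verifying that the connecting map $K(\ZZ/n,1) \to K(\ZZ,2)$ coming from the morphism of extensions is indeed the Bockstein $\beta$, rather than a nonzero multiple of it. This is standard: the short exact sequence of topological abelian groups $0 \to \ZZ \to \RR \to S^1 \to 0$ combined with the $n$-th power map identifies the inclusion $\ZZ/n \hookrightarrow S^1$ with the connecting map of $0 \to \ZZ \to \ZZ \to \ZZ/n \to 0$ after passage to classifying spaces, which is the definition of $\beta$.
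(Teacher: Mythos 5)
Your proposal is correct and takes essentially the same approach the paper indicates: the paper's one-line proof simply invokes the comparison of the two presentations of $\PU_n$, and your argument is the detailed execution of that comparison via the ladder of fiber sequences, together with the standard identifications of the connecting map with the Bockstein and of $\B\SU_n \to \BPU_n \to K(\ZZ/n,2)$ with the Postnikov truncation.
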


The map classifying $\BPU_n[4] \to K(\ZZ/n ,2)$ is a map 
\begin{equation} \label{eq:whatisfn} f_n: \BPU_n[3] = K(\ZZ/n ,2 ) \to K(\ZZ, 5) = K( \pi_4( \BPU_n), 5) ,\end{equation}
which represents a cohomology class
\[ f_n \in \Hoh^5(K(\ZZ/n, 2), \ZZ).\]

% When $X$ is a CW complex of cohomological dimension at most $6$, then there is a natural bijection\marginpar{URGENT: REWORK THIS}
% \begin{equation*}
%     [X,\BPU_n]\iso [X,\BPU_n[5]].
% \end{equation*}
% All obstructions to lifting a map $X\rightarrow \BPU_n[5]$ to a
% map $X\rightarrow\BPU_n$ occur in $\Hoh^{k+1}(X,\pi_k(\BPU_n))$ where $k\geq 6$. By hypothesis on $X$ these obstructions are all trivial and
% the lifts are uniquely determined up to homotopy.

\begin{lemma} \label{l:mult}
  The $r$-fold direct-summation homomorphism $\PU_n \to \Pc(n,rn)$ followed by the reduction $\Pc(n,rn) \to \PU_{rn}$ induces a map of
  Postnikov towers for $\B \PU_n \to \B \PU_{rn}$. The functorial map 
  \[ K(\ZZ/n ,2) =  \B\PU_n[3] \to \B\PU_{rn}[3] = K(\ZZ/(rn) , 2) \]
  induces
  \[ \Hoh^5(K(\ZZ/(rn) ,2), \ZZ) \to \Hoh^5(K(\ZZ/n,2), \ZZ) \]
  taking $f_{rn}$ to $r\cdot f_n$.
\end{lemma}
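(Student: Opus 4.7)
The plan is to identify the map induced on low-dimensional homotopy groups by the composite $\phi: \PU_n \to \Pc(n,rn) \to \PU_{rn}$ using Lemma~\ref{lem:homotopymaps}, then pass to Postnikov sections and invoke the naturality of the $k$-invariant.

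First, I would apply Lemma~\ref{lem:homotopymaps} to the two components of $\B\phi$. The $r$-fold direct summation $\PU_n \to \Pc(n,rn)$ induces an isomorphism on $\pi_2$ of classifying spaces; on $\pi_4$, since $\pi_3(\SU_n)=\ZZ$ lies in the stable range for $n\ge 2$, it is multiplication by $r$. The reduction $\Pc(n,rn) \to \PU_{rn}$ induces the inclusion $\iota: \ZZ/n \hookrightarrow \ZZ/(rn)$ on $\pi_2$ and an isomorphism on higher homotopy groups. Composing, $\B\phi$ induces $\iota$ on $\pi_2$ and multiplication by $r$ on $\pi_4$.

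Because $\pi_3(\B\PU_n) = 0 = \pi_3(\B\PU_{rn})$, the induced map on third Postnikov sections is identified with $K(\iota,2): K(\ZZ/n,2) \to K(\ZZ/(rn),2)$. Passing to the fourth stage then yields a map of principal $K(\ZZ,4)$-fibrations
\begin{equation*}
\xymatrix{K(\ZZ,4) \ar[r] \ar[d]_{r} & \B\PU_n[4] \ar[r] \ar[d] & K(\ZZ/n,2) \ar[d]^{K(\iota,2)} \\ K(\ZZ,4) \ar[r] & \B\PU_{rn}[4] \ar[r] & K(\ZZ/(rn),2)}
\end{equation*}
in which, by the previous paragraph, the left-hand fiber map is multiplication by $r$.

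Finally, the $k$-invariants classifying these fibrations are $f_n$ and $f_{rn}$ respectively. By the naturality of $k$-invariants for principal Eilenberg--MacLane fibrations, a fiber-preserving map as above induces a homotopy-commutative square $r\circ f_n \simeq f_{rn}\circ K(\iota,2)$, which on cohomology reads as $K(\iota,2)^*(f_{rn}) = r\cdot f_n$ in $\Hoh^5(K(\ZZ/n,2),\ZZ)$, which is the desired identity. The main point meriting attention is the naturality of the $k$-invariant with respect to a fiber-preserving map whose action on the fiber is multiplication by $r$ rather than the identity, but this is standard obstruction theory and follows directly from the universal property of the classifying path-space fibration $K(\ZZ,4)\to * \to K(\ZZ,5)$.
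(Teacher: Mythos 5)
Your proposal is correct and follows essentially the same route as the paper: identify the effect of the composite homomorphism on $\pi_2$ and $\pi_4$ via Lemma~\ref{lem:homotopymaps}, then read off the effect on $k$-invariants from the induced map of Postnikov towers. The paper presents this by drawing out the diagram of Postnikov stages explicitly and invoking the commutativity of the bottom square, while you phrase the last step as naturality of the $k$-invariant for a fiber-preserving map acting by multiplication by $r$ on the fiber, but the content is the same.
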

\begin{proof}
  The $r$-fold direct summation map, followed by the reduction, is a homomorphism $h: \PU_n \to \PU_{rn}$ which induces a map $\B h: \BPU_n \to \BPU_{rn}$
  \begin{align*}
    \pi_2(\BPU_n) \to \pi_2(\BPU_{rn}) & \quad \ZZ/n \hookrightarrow \ZZ/rn  \quad \text{as a standard inclusion,}\\
    \pi_4(\BPU_n) \to \pi_4(\BPU_{rn} & \quad \ZZ \hookrightarrow \ZZ \quad \text{as the map $1 \mapsto r$,}
  \end{align*}
  both consequences of Lemma \ref{lem:homotopymaps}.

  The map $\B h$ gives rise to a functorial map of Postnikov towers, including 
  \[ 
  \xymatrix@C=-15pt{ \BPU_n[4] \ar^{\B h[4]}[dr]  \ar[dd] \\ & \BPU_{rn}[4] \ar[dd] \\ K(\ZZ/n ,2) = \BPU_n[3] \ar^{\B h[3]}[dr] \ar'[r]
    [rr] ^(0.2){f_n} & & K(\pi_4(\BPU_n), 5) = K(\ZZ,5)
    \ar^{\times r}[dr] \\ &
  K(\ZZ/nr, 2) =  \BPU_{rn}[3] \ar^{f_{rn}}[rr] & & K(\pi_4(\BPU_{rn}), 5) = K(\ZZ,5) }
  \]
  where the horizontal maps are classifying maps for the vertical maps, which are $K(\ZZ, 4)$--bundle maps. The map indicated by $\times r$ is
  the map obtained by applying the functor $K( \cdot, 5)$ to $\pi_4(\BPU_n) \to \pi_4(\BPU_{rn})$, which is an inclusion of $\ZZ$ in $\ZZ$ given by
  $1 \mapsto r$, and consequently the map in the diagram represents the operation $\times r$ on $\Hoh^5( \cdot, \ZZ)$. 
  Because of the commutativity of the bottom square, the map $\B h[3]^* : \Hoh^5(K(\ZZ/nr, 2), \ZZ) \to \Hoh^5(K(\ZZ/n, 2), \ZZ)$ takes
  $f_{nr}$ to $r \cdot f_n$, which is the assertion of the lemma.
\end{proof}

\begin{proposition}\label{p:generator}
  Let $n \ge 2$ be an integer. The element $f_n \in \Hoh^5(K(\ZZ/n,2), \ZZ)$ is a generator.
\end{proposition}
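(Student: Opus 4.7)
The plan is to leverage the vanishing $\Hoh^5(\BPU_n, \ZZ) = 0$ established in Proposition \ref{prop:cohbpun}, together with the Postnikov fibration
\[ K(\ZZ, 4) \longrightarrow \BPU_n[4] \longrightarrow K(\ZZ/n, 2) \]
whose classifying map is, by definition, $f_n$. First I would observe that the coskeleton map $\BPU_n \to \BPU_n[4]$ is an isomorphism on $\pi_i$ for $i \le 4$, hence $5$-connected, and therefore induces an injection $\Hoh^5(\BPU_n[4], \ZZ) \hookrightarrow \Hoh^5(\BPU_n, \ZZ)$. Since the target is zero, so is the source: $\Hoh^5(\BPU_n[4], \ZZ) = 0$.

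Next I would examine the cohomological Serre spectral sequence of the displayed fibration. Since $\Hoh^q(K(\ZZ, 4), \ZZ)$ vanishes for $q \in \{1,2,3,5\}$ and equals $\ZZ \cdot \iota_4$ for $q=4$, the only term on the $E_\infty$-page contributing to $\Hoh^5(\BPU_n[4], \ZZ)$ is $E_\infty^{5,0}$; moreover, the only differential which can affect either $E_r^{0,4}$ or $E_r^{5,0}$ in the relevant range is the transgression
\[ d_5 \colon E_5^{0,4} = \ZZ \longrightarrow E_5^{5,0} = \Hoh^5(K(\ZZ/n, 2), \ZZ) = \ZZ/\epsilon(n) \cdot Q_n. \]
By the defining property of a $k$-invariant, this transgression sends the tautological class $\iota_4$ to $f_n$.

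Finally, the equality $E_\infty^{5,0} = \coker(d_5) = 0$ forces $f_n$ to generate $\Hoh^5(K(\ZZ/n, 2), \ZZ)$, proving the proposition. The main substantive work has already been accomplished, namely the computation $\Hoh^5(\BPU_n, \ZZ) = 0$ of Proposition \ref{prop:cohbpun} and the determination of $\Hoh^5(K(\ZZ/n, 2), \ZZ) = \ZZ/\epsilon(n)$; once both are in hand, the Serre spectral sequence argument is essentially formal, and I do not anticipate any real obstacle beyond verifying the bookkeeping of degrees to confirm that $d_5$ is the only differential in play.
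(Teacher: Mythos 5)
Your proof is correct, and it improves on the paper's argument at exactly one step. Both arguments run the Serre spectral sequence of the Postnikov fibration $K(\ZZ,4)\to\BPU_n[4]\to K(\ZZ/n,2)$, identify the transgression $d_5$ out of $\Eoh_5^{0,4}=\ZZ\cdot\iota_4$ with the $k$-invariant $f_n$, and conclude $\Hoh^5(\BPU_n[4],\ZZ)\iso(\ZZ/\epsilon(n))/\im f_n$. The difference is in how the left-hand side is shown to vanish. The paper argues that for $n>2$, $\pi_5(\BPU_n)=0$ so $\BPU_n[4]\simeq\BPU_n[5]$, whence $\Hoh^5(\BPU_n[4],\ZZ)=\Hoh^5(\BPU_n,\ZZ)=0$; but since $\pi_5(\BPU_2)=\ZZ/2$ this breaks down at $n=2$, and the paper handles that case with a second Serre spectral sequence for $K(\ZZ/2,5)\to\BPU_2[5]\to\BPU_2[4]$. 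Your route is uniform: the coskeleton map $\BPU_n\to\BPU_n[4]$ is $5$-connected (it is an isomorphism on $\pi_i$ for $i\le 4$ and trivially surjective onto $\pi_5(\BPU_n[4])=0$), so by relative Hurewicz and the long exact sequence of the pair it induces an injection $\Hoh^5(\BPU_n[4],\ZZ)\hookrightarrow\Hoh^5(\BPU_n,\ZZ)=0$, regardless of whether $\pi_5(\BPU_n)$ vanishes. This buys you a cleaner proof that dispenses with the $n=2$ case-split entirely; both approaches ultimately rest on the same computational input, namely $\Hoh^5(\BPU_n,\ZZ)=0$ from Proposition~\ref{prop:cohbpun}.
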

\begin{proof}

   Associated to the fibration $\BPU_n[4] \to K(\ZZ/n, 2)$, we have a Serre spectral sequence for integral cohomology, part of the $\Eoh_2$-page of which is
  illustrated in Figure \ref{fig:sss}.
  \begin{figure}[h]
    \centering
    \begin{equation*}
      \xymatrix@R=2px@C=10px{ \ZZ \ar^{f_n}[ddddrrrrr] \\ 0 \\ 0 \\ 0 \\ \ZZ & 0 & 0 & \ZZ/n & 0 &
        \ZZ/(\epsilon(n)) }
    \end{equation*}
    \caption{The Serre spectral sequence associated to $\BPU_n[4] \to K(\ZZ/n,2)$}
    \label{fig:sss}
  \end{figure}
  Therefore $\Hoh^5(\BPU_n[4], \ZZ) = \dfrac{\ZZ}{\epsilon(n)}\Big/\im(f_n)$.

  In the case where $n>2$, because $\pi_5(\BPU_n) = 0$, it follows that $\BPU_n[4] \simeq
  \BPU_n[5]$, and so $\Hoh^5(\BPU_n[4], \ZZ) = \Hoh^5(\BPU_n, \ZZ) = 0$, and $f_n$ is consequently surjective.

  In the case where $n=2$, we discover another extension problem, associated to the
  principal fibration $K( \ZZ/2 , 5) \to \BPU_2[5] \to \BPU_2[4]$.
  \begin{figure}[h]
    \centering
    \begin{equation*}
      \xymatrix@R=2px@C=10px{ \ZZ/2 \ar^{d_6}[dddddrrrrrr] \\ 0\\ 0 \\ 0 \\ 0 \\ \ZZ & 0 & 0 & \ZZ/2 & 0 &
        \ZZ/4 \smash{\Big/} \im f_2 & \ast }
    \end{equation*}
    \caption{The Serre spectral sequence associated to $\BPU_2[5] \to \BPU_2[4]$}
    \label{fig:sss2}
  \end{figure}
  Since $\Hoh^5(\BPU_2[5], \ZZ) = \Hoh^5(\BPU_2, \ZZ) = 0$, there is an exact sequence
  $0 \to \ZZ/4\Big/ \im f_2 \to 0 \to \ZZ/2 \to \ast$. By examining the part of the Serre
  spectral sequence appearing in Figure~\ref{fig:sss2}, it follows that $f_2$ is surjective as well.
\end{proof}

We now know that $f_n$ and $Q$ are both generators of the group $\Hoh^5(K(\ZZ/n, 2), \ZZ)$. Although $Q(\xi)$ and $f_n \circ \xi$ may not
coincide, for a given $\xi \in \Hoh^2(X, \ZZ/n)$, we know that $\langle f_n \circ \xi \rangle = \langle Q(\xi) \rangle$, and in particular
that $\ord(f_n \circ \xi) = \ord(Q_n(\xi))$.

\section{Relating the Obstruction Theory to the Atiyah--Hirzebruch Spectral Sequence} \label{sec:relat-obstr-theory}

This section is a synthesis of the work of the preceding three. The first order of business,
in Proposition \ref{p:loweasy}, is to show that
the operation $Q$ of the previous section can be used to solve the period--index problem on low-dimensional CW complexes.
In Theorem \ref{thm:relations} we show that the operation $G$ of Definition \ref{d:wasA} and the operation $Q$, which arose in the study of the
low-dimensional part of the Postnikov tower of $\BPU_n$ in the preceding two sections, generate the same subgroup of $\Hoh^5(X, \ZZ) /
(\Hoh^2(X, \ZZ) \smile \alpha)$, and therefore furnish the same obstructions. The main result of the section is Theorem A, which sets out
how the cohomology in dimensions $i \le 5$ of a finite connected CW complex $X$ may be used to give bounds on the topological index of a given
class $\alpha \in \Br'_\topo(X)$. Theorem B then establishes a sharp upper bound on the topological index in terms of the topological period
on finite CW complexes of dimension no greater than $6$, disproving the straw-man conjecture of the Introduction.

\begin{proposition} \label{p:loweasy}
  Let $X$ be a finite CW complex of dimension $d\le 5$ such that $\Hoh^2(X, \ZZ) = 0$. Let $\alpha \in \Hoh^3(X, \ZZ)$ be a
  class of order $n$, and let $\xi$ be the unique class in $\Hoh^2(X,\ZZ/n)$ such that $\beta_n(\xi) = \alpha$, then
  \[\ind_{\topo}(\alpha) = \per_{\topo}(\alpha) \ord( Q(\xi)).\] 
\end{proposition}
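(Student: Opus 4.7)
The plan is to apply obstruction theory to the Postnikov tower of $\BPU_m$ as $m$ ranges over positive multiples of $n = \pert(\alpha)$. By Lemma \ref{l:PB}, any lift of $\alpha$ to $\BPU_m$ forces $n \mid m$, so it suffices to consider $m = rn$ with $r \geq 1$. The hypothesis $\Hoh^2(X, \ZZ) = 0$ makes each Bockstein $\beta_{rn}$ injective on $\Hoh^2(X, \ZZ/rn)$, so $\alpha$ has a unique lift $\xi_{rn} \in \Hoh^2(X, \ZZ/rn)$ with $\beta_{rn}(\xi_{rn}) = \alpha$; in particular $\xi_n = \xi$, and a short diagram chase (using $a = 1, b = r$ in the evident morphism of short exact sequences) identifies $\xi_{rn}$ with the image of $\xi$ under the standard coefficient inclusion $\ZZ/n \hookrightarrow \ZZ/rn$. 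A lift of $\alpha$ to $\BPU_{rn}$ is thus the same as a lift of the map $\xi_{rn}\colon X \to K(\ZZ/rn, 2)$ along $\BPU_{rn} \to K(\ZZ/rn, 2)$.

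Next, I would show that the sole obstruction to such a lift is $f_{rn} \circ \xi_{rn} \in \Hoh^5(X, \ZZ)$. When $rn \geq 3$, Bott periodicity yields $\pi_5(\BPU_{rn}) = \pi_4(\SU_{rn}) = 0$, so $\BPU_{rn}[4] \simeq \BPU_{rn}[5]$, and all higher Postnikov obstructions land in $\Hoh^{\geq 6}(X, -) = 0$ by the dimension hypothesis. In the single remaining case $n = 2, r = 1$, an additional Postnikov stage classified by the $d_6$-differential of Figure~\ref{fig:sss2} contributes an obstruction in $\Hoh^6(X, \ZZ/2)$, which likewise vanishes since $\dim X \leq 5$.

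Finally, Lemma \ref{l:mult} applied to the composite $\BPU_n \to \B\Pc(n, rn) \to \BPU_{rn}$ supplies a map $\Brm h[3]\colon K(\ZZ/n, 2) \to K(\ZZ/rn, 2)$ sending $\xi \mapsto \xi_{rn}$ and satisfying $(\Brm h[3])^*(f_{rn}) = r \cdot f_n$ in $\Hoh^5(K(\ZZ/n, 2), \ZZ)$. Evaluating on $\xi$ gives $f_{rn} \circ \xi_{rn} = r (f_n \circ \xi)$ in $\Hoh^5(X, \ZZ)$. Since by Proposition \ref{p:generator} both $f_n$ and $Q = Q_n$ generate $\Hoh^5(K(\ZZ/n, 2), \ZZ) = \ZZ/\epsilon(n)$, we have $\ord(f_n \circ \xi) = \ord(Q(\xi))$. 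Consequently $\alpha$ lifts to $\BPU_{rn}$ exactly when $\ord(Q(\xi)) \mid r$, and the gcd of the valid values of $m = rn$ equals $\pert(\alpha) \ord(Q(\xi))$, as desired.

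The one genuinely delicate step is the obstruction-theoretic analysis in the $n = 2, r = 1$ case, where the secondary Postnikov stage is non-trivial and one must rely on the dimension hypothesis to eliminate its contribution; all other cases collapse to a single obstruction by Bott periodicity alone.
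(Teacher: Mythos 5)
Your argument is correct and follows essentially the same route as the paper: factor $\alpha$ through $K(\ZZ/rn,2)$ using $\Hoh^2(X,\ZZ)=0$, identify the single lifting obstruction to $\BPU_{rn}$ with $f_{rn}\circ\xi_{rn}=r(f_n\circ\xi)$ via Lemma~\ref{l:mult}, and then invoke Proposition~\ref{p:generator} to replace $f_n$ with $Q$. One small remark: the case distinction between $rn\geq 3$ and $rn=2$ is unnecessary work, since the dimension hypothesis already kills every obstruction beyond the $\pi_4$ stage regardless of whether $\pi_5(\BPU_{rn})$ vanishes — all later obstructions land in $\Hoh^{\geq 6}(X,\cdot)=0$, so the Bott-periodicity computation of $\pi_5$ buys you nothing here. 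Your preliminary observation that a lift to $\BPU_m$ forces $n\mid m$ (so one need only consider $m=rn$) is a point the paper leaves implicit, and including it is a genuine, if small, improvement in clarity.
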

\begin{proof}
  For any $r$ there exists a factorization of $\alpha$ as follows
  \begin{equation*}
     \xymatrix{ X \ar[r] & K(\ZZ/nr, 2) \ar[r] & K(\ZZ, 3),}
  \end{equation*}
  and this factorization is unique up to homotopy since any two different factorizations would differ by a class in the image of $\Hoh^2(X,
  \ZZ) \to \Hoh^2(X, \ZZ/nr)$. Since $X$ is of dimension $d \le 5$, the only obstruction to finding a lift
  \begin{equation*}
    \xymatrix{  & \BPU_{nr} \ar[d] \\ X  \ar@{-->}[ur] \ar[r] & K(\ZZ/nr, 2) }
  \end{equation*}
  is the first one arising in the Postnikov tower of $\BPU_{nr}$, as presented in diagram \eqref{eq:6} and the subsequent discussion. There
  is a lift if and only if \[X \to K(\ZZ/nr, 2) \overset{f_{nr}}\longrightarrow K(\ZZ, 5)\] is nullhomotopic, which it is if and only if
  $r(f_n \circ \xi )\simeq 0$ by Lemma \ref{l:mult}, or equivalently if and only if $r | \ord(f_n \circ \xi)$. The result follows, since $\ord(f_n \circ
  \xi) = \ord(Q(\xi))$ by the discussion following Proposition \ref{p:generator}.
\end{proof}

% \begin{corollary}\label{c:indb}
%   Let $Y$ denote $\sk_5(K(\ZZ/n, 2))$, and let $\beta^Y \in \Hoh^3(Y, \ZZ)$ denote the restriction of the Bockstein $\beta$ to $Y$. Let
%   $\iota^Y \in \Hoh^2(Y, \ZZ/n)$ denote the restriction of the canonical class. Then
%   \[\ind_{\topo}(\beta^Y) = \per_{\topo}(\beta^Y) \ord(Q(\iota^Y)) = \per_{\topo}(\beta^Y)\ord( G(\beta^Y)).\]
%   In particular, $\ord(G(\beta^Y)) = \ord(Q(\iota^Y)) = \epsilon(n)$.
% \end{corollary}
% \begin{proof}
%     One combines the proposition with Theorem \ref{thm:wasA} and Proposition~\ref{p:generator}.
% \end{proof}

\begin{figure}[h]
  \centering \begin{equation*}
   \xymatrix@C=12px@R=3px{
     \ZZ \ar_{\smile \beta}[ddrrr] \ar^{d_5^\beta(n)}@/^1.1em/[ddddrrrrr] & 0 & 0 & \ZZ/n & 0 & \ZZ/\epsilon(n)\\
     0 \\
     \ZZ & 0 & 0 & \ZZ/n & 0 & \ZZ/\epsilon(n) \\
     0 \\
     \ZZ & 0 & 0 & \ZZ/n & 0 &\ZZ/\epsilon(n) \\ 
   }  
  \end{equation*}
  \caption{The Atiyah--Hirzebruch spectral sequence for $\KU^*(K(\ZZ/n, 2))_\beta$, twisted by the Bockstein.}
  \label{fig:AHSSK}
\end{figure}

Recall, if $g \in A$ is an element of an abelian group $A$, that we write $\langle g
\rangle$ for the cyclic subgroup generated by $g$.

\begin{theorem}\label{thm:relations}
  Let $X$ be a connected finite CW complex and let $n$ be an integer, and $\xi \in \Hoh^2(X, \ZZ/n)$ be a cohomology class.
  Let $G$ be the operation of Theorem~\ref{thm:wasA}. Write $\tilde Q(\xi)$ for the image of $Q(\xi)$ in
  $\Hoh^5(X,\ZZ)/(\Hoh^2(X,\ZZ)\smile\beta_n(\xi))$, then $\tilde Q(\xi)$ is $\epsilon(n)$-torsion, and 
\[\left \langle \frac{n}{\per_{\topo}(\beta_n(\xi))}G(\beta_n(\xi)) \right \rangle= \left\langle \tilde Q(\xi) \right\rangle.\] 
\end{theorem}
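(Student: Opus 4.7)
The plan is to reduce the identity by naturality to a universal computation on $K(\ZZ/n,2)$, and then to establish the universal statement by computing a topological index on a finite-dimensional approximation.

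For the reduction, note that $\xi \in H^2(X, \ZZ/n)$ is classified by a map $\xi \colon X \to K(\ZZ/n, 2)$ with $\xi^*\iota = \xi$ and $\xi^*\beta = \beta_n(\xi)$, where $\iota$ and $\beta = \beta_n(\iota)$ are the universal classes. Since $\pert(\beta) = n$, the naturality formula~\eqref{eq:nat} of $G$ gives $\xi^* G(\beta) = (n/p)\,G(\beta_n(\xi))$ with $p = \pert(\beta_n(\xi))$, while the fact that $Q$ is a natural cohomology operation gives $\xi^* Q(\iota) = Q(\xi)$. Pulling back cyclic subgroups along $\xi^*$ and projecting to the quotient $H^5(X, \ZZ)/(H^2(X, \ZZ) \smile \beta_n(\xi))$ therefore reduces the theorem to the universal assertion $\langle G(\beta)\rangle = \langle Q(\iota)\rangle$ inside $H^5(K(\ZZ/n, 2), \ZZ) \cong \ZZ/\epsilon(n)$. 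The $\epsilon(n)$-torsion of $\tilde Q(\xi)$ is then immediate, as $Q(\iota) = Q_n$ is an element of $\ZZ/\epsilon(n)$.

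Since $Q_n$ is a generator of $\ZZ/\epsilon(n)$ by Section~\ref{sec:obstrbpu}, the universal equality is equivalent to $\ord(G(\beta)) = \epsilon(n)$. I would prove this by passing to the $6$-skeleton $X_6 \subset K(\ZZ/n, 2)$, a finite connected $6$-dimensional CW complex. The inclusion $\iota_{X_6}$ induces isomorphisms on $H^i$ for $i \le 5$, and naturality gives $G(\iota_{X_6}^*\beta) = \iota_{X_6}^* G(\beta)$, so equation~\eqref{eq:div2} of Theorem~\ref{thm:wasA} (applied on the $6$-complex $X_6$) identifies $\ord(G(\beta))$ with $\indt(\iota_{X_6}^*\beta)/n$. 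It therefore suffices to show $\indt(\iota_{X_6}^*\beta) = n\epsilon(n)$. The key obstruction-theoretic input is this: for $m = kn$, the vanishing $H^2(X_6, \ZZ) = 0$ and the Bockstein long exact sequence force the unique lift of $\iota_{X_6}^*\beta$ to $H^2(X_6, \ZZ/m)$ to arise from composition with the map $\Brm\phi \colon K(\ZZ/n, 2) \to K(\ZZ/m, 2)$ induced by the standard inclusion $\ZZ/n \hookrightarrow \ZZ/m$. The primary Postnikov obstruction to lifting through $\BPU_m[4]$ pulls back to $k\cdot(\iota_{X_6}^* f_n)$ by Lemma~\ref{l:mult}, which by Proposition~\ref{p:generator} vanishes precisely when $\epsilon(n) \mid k$. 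Higher Postnikov obstructions live in $H^i(X_6, \pi_{i-1}\BPU_m)$ for $i \ge 6$ and vanish because $\pi_5\BPU_m = 0$ for $m \ge 3$ while $H^i(X_6, \ZZ) = 0$ for $i \ge 7$. Hence the minimal valid $m$ is $n\epsilon(n)$, giving $\indt(\iota_{X_6}^*\beta) = n\epsilon(n)$ and completing the proof.

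The main obstacle is the obstruction-theoretic analysis on $X_6$: one must secure uniqueness of the lift to $K(\ZZ/m, 2)$ (so that Lemma~\ref{l:mult} applies directly) and verify the vanishing of all higher Postnikov obstructions on a $6$-dimensional complex, for which the key points are the vanishing of $\pi_5\BPU_m$ for $m \ge 3$ and the dimensional vanishing $H^i(X_6, \ZZ) = 0$ for $i \ge 7$. Once these are in place, Proposition~\ref{p:generator} pins down exactly the $k$ for which $k\cdot f_n$ vanishes in $\ZZ/\epsilon(n)$, and the desired generator property of $G(\beta)$ follows.
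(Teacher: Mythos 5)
Your proof is correct, and it uses the paper's overall strategy: reduce by naturality to the universal example $K(\ZZ/n,2)$, where the two cosets $G(\beta)$ and $Q_n$ must be compared inside $\Hoh^5(K(\ZZ/n,2),\ZZ)\iso\ZZ/\epsilon(n)$. The difference lies in how the universal equality is secured. The paper carries out the reduction at the level of the map of twisted Atiyah--Hirzebruch spectral sequences induced by $\xi\colon X\to K(\ZZ/n,2)$, and then simply records that the universal $d_5^\beta\colon n\ZZ\to\ZZ/\epsilon(n)$ is surjective, so that $G(\beta)=d_5^\beta(n)$ is necessarily a unit multiple of the generator $Q_n$. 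You instead work one level of abstraction higher, invoking the already-proved naturality formula~\eqref{eq:nat} for $G$ together with the naturality of the cohomology operation $Q$, and you supply an explicit verification of the universal surjectivity: you restrict to $\sk_6 K(\ZZ/n,2)$, compute $\indt(\iota_{X_6}^*\beta)=n\epsilon(n)$ by the Postnikov obstruction argument (checking, correctly, that the unique lift to $\Hoh^2(X_6,\ZZ/m)$ factors through $K(\ZZ/n,2)\to K(\ZZ/m,2)$, that Lemma~\ref{l:mult} and Proposition~\ref{p:generator} force $\epsilon(n)\mid k$, and that $\pi_5\BPU_m=0$ for $m\geq 3$ kills the obstruction in $\Hoh^6$ on a $6$-complex), and then invoke~\eqref{eq:div2} to get $\ord(G(\beta))=\epsilon(n)$. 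In effect you re-derive Proposition~\ref{p:loweasy} one dimension higher; this makes your proof somewhat longer, but it has the merit of making the source of the universal surjectivity completely transparent, which the paper's terse assertion leaves to the reader.
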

\begin{proof}
  We write $\beta$ for $\beta_n$ throughout.
  The statement regarding the torsion of $\tilde Q(\xi)$ is immediate, since $Q(\xi)$ is $\epsilon(n)$-torsion.

  There is a classifying map $X\xrightarrow{\xi} K(\ZZ/n, 2)$, which induces a map of spectral sequences converging conditionally to the map
  $\KU^*(K(\ZZ/n, 2))_{\beta} \to \KU^*(X)_{\beta(\xi)}$. We consider the differentials supported by $\ZZ = \Eoh_2^{0,0}$ for both spaces. The
  $d_3$-differentials take the form
  \begin{equation*}
    \xymatrix{ \ZZ \ar^{d_3^{\beta}}[d] \ar@{=}[r] & \ZZ\ar^{d_3^{\beta(\xi)}}[d] \\ \Hoh^3(K(\ZZ/n, 2), \ZZ) \ar[r] &\Hoh^3(X, \ZZ).}
  \end{equation*}

  The $d_5$-differentials take the form
  \begin{equation*}
    \xymatrix{ n \ZZ \ar@{^(->}[r] \ar^{d_5^\beta}@{->>}[d] & \per_{\topo}(\beta(\xi))\ZZ \ar^{d_5^{\beta(\xi)}}[d] \\ \ZZ/\epsilon(n) \ar[r] & \Hoh^5(X, \ZZ)/
      (\Hoh^2(X, \ZZ)\smile\beta(\xi))}
  \end{equation*}
  The left vertical map is $n\mapsto G(\beta)$, and is surjective. In particular $d^\beta_5(n) $ is a unit multiple of $\tilde Q$, since $Q$ generates
  $\Hoh^5(K(\ZZ/n, 2), \ZZ)$. The top horizontal map takes the generator $n \in n \ZZ$ to $\frac{n}{\per_{\topo}(\beta(\xi))} \per_{\topo}(\beta(\xi)) \in \per_{\topo}(\beta(\xi)) \ZZ$. The
  bottom horizontal map takes $Q$ tautologically to $\tilde Q(\xi)$. The right vertical map therefore takes $\frac{n}{\per_{\topo}(\beta(\xi))} \per_{\topo}(\beta(\xi))$ to
  $u\tilde Q(\xi)$ for some unit $u$. The class of $d_5^{\beta(\xi)}(\per_{\topo}\beta(\xi))$ in $\Hoh^5(X, \ZZ)/
  (\Hoh^2(X, \ZZ)\smile\beta(\xi))$ is $G(\beta(\xi))$ by
  definition, so the relation 
  \[ \left\langle\frac{n}{\per_{\topo}(\beta(\xi))} G(\beta(\xi))\right\rangle =
      \left\langle \tilde  Q(\xi)\right\rangle \]
  follows.
\end{proof}

If $\alpha \in \Hoh^3(X, \ZZ)_{\tors}$ is any class, then there exists some lift $\xi \in \Hoh^2(X, \ZZ/n)$ such that $\beta(\xi) = \alpha$
although this lift need not be unique. By this method, the order of $G(\alpha) = G(\beta(\xi))$ may be computed, and one deduces further
that $G(\alpha)$ is $\epsilon(n)$-torsion, since $\xi$ is $n$-torsion.

Whereas the previous theorem assumed only that the class $\xi \in \Hoh^2(X, \ZZ/n)$, and therefore the class $\beta_n(\xi) = \alpha$ was
$n$-torsion, the following is more restrictive in that it assumes that the order of the class in question is precisely $n$. This simplifies
the result at the expense of little generality.

\begin{namedtheorem}[A]
  Let $X$ be a connected finite CW complex, let $\alpha \in \Hoh^3(X,\ZZ)_\tors = \Brt(X)$
  be a Brauer class, and write $n = \pert(\alpha)$.
  Choose $\xi \in \Hoh^2(X,\ZZ/n)$ such that $\beta_n(\xi) = \alpha$, let $Q(\xi) $ denote either $\beta_n(\xi^2)$ if $n$ is odd or
  $\beta_{2n}(P_2(\xi))$ if $n$ is even, and let $\tilde Q(\xi)$ denote the reduction of $Q(\xi)$ to $\Hoh^5(X, \ZZ)/ ( \alpha \smile
  \Hoh^2(X, \ZZ))$. Then:
  \begin{enumerate}
  \item The class $\tilde Q (\xi)$ depends only on $\alpha$. We write $\tilde Q(\alpha)$ for
      this class.
  \item The order of $\tilde Q(\xi)$ divides $\pert(\alpha)$ if $\pert(\alpha)$ is odd and divides $2 \pert(\alpha)$ if $\pert(\alpha)$
    is even.
  \item For any $\alpha$ and $\xi$,
    \begin{equation*}
      \pert(\alpha) \ord(\tilde Q(\xi)) | \indt(\alpha),
    \end{equation*}
    with equality if the dimension of $X$ is at most $6$.
  \end{enumerate}    
\begin{proof}
  We first show that $\tilde Q(\xi)$ depends only on $\alpha$. Any two choices of class $\xi, \xi' \in \Hoh^2(X, \ZZ/n)$ with the property
    that $\beta(\xi) = \beta(\xi') = \alpha$ differ by a class $\xi - \xi' = \nu$ where $\nu $ is in the image of the reduction map
    $\Hoh^2(X, \ZZ) \to \Hoh^2(X, \ZZ/n)$. The universal example of a space equipped with two classes $\xi, \xi'$ for which $\beta(\xi) =
    \beta(\xi')$ is the space $Y = K(\ZZ/n, 2) \times K(\ZZ, 2)$, which is equipped with two
    canonical classes $\tau \in \Hoh^2(Y, \ZZ/n)$ and $\tau+\sigma$, where
    $\sigma \in \Hoh^2(Y, \ZZ)\iso\ZZ$ is a generator. There is a map $\phi: K(\ZZ/n, 2) \to Y$ splitting the projection map. We write $\rho$ for the
    generator of $\Hoh^2(K(\ZZ/n, 2), \ZZ/n)$ and $\bar \sigma$ for the reduction of the class $\sigma$ to $\ZZ/n$ coefficients. The
    universality of $Y$ means that there is a map $f: X \to Y$ such that $f^*(\tau) = \xi$ and $f^*( \bar \sigma) = \nu$. Since
    $\tilde Q(\cdot)$ is natural, it suffices to show that $\tilde Q(\tau)$ and $\tilde
    Q(\tau + \bar \sigma)$ agree in $\Hoh^5(Y, \ZZ)/
    (\beta(\tau) \smile \Hoh^2(Y, \ZZ))$.

    The K\"unneth formula implies that $\Hoh^2(Y , \ZZ)\iso\Hoh^2(K(\ZZ, 2),\ZZ)\iso\ZZ$ and, because of the lack of torsion in $\Hoh^*(K(\ZZ, 2), \ZZ)$, that
    \[\Hoh^5(Y , \ZZ) \iso \Hoh^5( K(\ZZ/n, 2), \ZZ) \oplus \left(\Hoh^3(K( \ZZ/n, 2), \ZZ) \tensor \Hoh^2(K(\ZZ, 2) , \ZZ) \right). \]
    The splitting map, $\phi$, induces
    \begin{align*}
      & \phi^*: \Hoh^2( Y , \ZZ/n) \to \Hoh^2(K(\ZZ/n, 2) , \ZZ/n) \text{\qquad satisfying $\phi^*(\tau) = \rho$ and $\phi^*(\bar \sigma)
        =0$,}\\
      & \phi^*: \frac{ \Hoh^5(Y, \ZZ/n) }{ \beta(\tau) \smile \Hoh^2(Y, \ZZ) } \iso \frac{\Hoh^5(K(\ZZ/n, 2), \ZZ) }{ \beta(\tau) \smile
        \Hoh^2(K(\ZZ/n, 2), \ZZ) }\iso\Hoh^5(K(\ZZ/n,2),\ZZ). 
    \end{align*}
    By naturality, $\tilde Q ( \tau + \bar \sigma)  = (\phi^*)^{-1} \tilde Q (\rho) = \tilde Q( \tau) $, as required.

    The statements regarding the order of $\tilde Q( \xi)$ and the divisibility relation follow immediately from Theorem \ref{thm:relations}.
  \end{proof}
\end{namedtheorem}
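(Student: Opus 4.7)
The plan splits naturally along the three assertions, with (2) and (3) reducing essentially to the previous sections while (1), the well-definedness of $\tilde Q(\alpha)$, carries the real content.

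For (1), I would argue universally. Two lifts $\xi, \xi' \in \Hoh^2(X, \ZZ/n)$ of $\alpha$ differ by the mod-$n$ reduction $\nu$ of some integral class $\tilde\nu \in \Hoh^2(X, \ZZ)$, since the kernel of $\beta_n$ on $\Hoh^2(X, \ZZ/n)$ is precisely the image of reduction. The universal example is the product $Y = K(\ZZ/n, 2) \times K(\ZZ, 2)$, equipped with tautological classes $\tau \in \Hoh^2(Y, \ZZ/n)$ and $\sigma \in \Hoh^2(Y, \ZZ)$; the pair $(\xi, \tilde\nu)$ is classified by a map $X \to Y$ pulling back $\tau$ to $\xi$ and $\bar\sigma$ to $\nu$. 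By naturality of $Q$ it suffices to show $Q(\tau) \equiv Q(\tau + \bar\sigma)$ modulo $\beta_n(\tau) \smile \Hoh^2(Y, \ZZ)$. Because $\Hoh^*(K(\ZZ, 2), \ZZ)$ is torsion-free, the K\"unneth formula provides a clean splitting
\begin{equation*}
  \Hoh^5(Y, \ZZ) \iso \Hoh^5(K(\ZZ/n, 2), \ZZ) \oplus \bigl(\Hoh^3(K(\ZZ/n, 2), \ZZ) \otimes \Hoh^2(K(\ZZ, 2), \ZZ)\bigr),
\end{equation*}
and I would exploit the section $\phi: K(\ZZ/n, 2) \hookrightarrow Y$ of the first projection, which sends $\bar\sigma$ to zero and $\tau$ to the tautological class. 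Pulling back along $\phi$ identifies the first K\"unneth summand with the quotient $\Hoh^5(Y, \ZZ)/(\beta_n(\tau) \smile \Hoh^2(Y, \ZZ))$, and the images of $Q(\tau)$ and $Q(\tau + \bar\sigma)$ under $\phi^*$ are both $Q$ applied to the tautological class, forcing their difference to lie in the second K\"unneth summand, exactly as required.

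For (2), $Q(\xi)$ is the pullback along $\xi: X \to K(\ZZ/n, 2)$ of the generator $Q_n \in \Hoh^5(K(\ZZ/n, 2), \ZZ) \iso \ZZ/\epsilon(n)$ identified in Section \ref{sec:obstrbpu}, so the order of $Q(\xi)$, and therefore of $\tilde Q(\xi)$, divides $\epsilon(n) = n\gcd(2, n)$. For (3), I would feed the assumption $n = \pert(\alpha)$ into Theorem \ref{thm:relations}: the scaling factor $n/\pert(\beta_n(\xi))$ becomes $1$, yielding $\langle G(\alpha)\rangle = \langle \tilde Q(\xi)\rangle$ and hence $\ord(G(\alpha)) = \ord(\tilde Q(\xi))$. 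The divisibility $\pert(\alpha)\ord(\tilde Q(\xi)) \mid \indt(\alpha)$ with equality in dimension $\le 6$ then follows at once from Theorem \ref{thm:wasA}.

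The main obstacle is part (1). A more pedestrian alternative is to compute $Q(\xi + \nu) - Q(\xi)$ directly, using $P_2(a + b) = P_2(a) + P_2(b) + i_*(a \smile b)$ in the even case and the ordinary binomial expansion in the odd case, together with the derivation rule for the Bockstein applied to cup products involving an integral class. The delicate point there is checking that the cross term lands in the subgroup $\alpha \smile \Hoh^2(X, \ZZ)$ rather than a larger group, which amounts to the identity $\beta_{2n} \circ i_* = \beta_n$ on $\Hoh^2(X, \ZZ/n)$ when $n$ is even. The universal K\"unneth argument sketched above sidesteps this bookkeeping by concentrating the calculation into a product of Eilenberg--MacLane spaces whose cohomology has already been computed.
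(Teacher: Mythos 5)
Your proof is correct and follows essentially the same approach as the paper: part~(1) via the universal example $Y = K(\ZZ/n,2) \times K(\ZZ,2)$, the K\"unneth splitting, and the section $\phi$, with parts~(2) and (3) reduced to the earlier results (the paper cites Theorem~\ref{thm:relations} for both, while you unwind (2) directly from $\ord Q_n = \epsilon(n)$, an inessential difference). The pedestrian alternative you sketch at the end is a reasonable plan B but is not what either you or the paper actually carries out.
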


  All subsequent appeals to $G(\alpha)$ will reduce to consideration of $\ord(G(\alpha))$, which can be calculated by lifting $\alpha$ to
  $\xi$ and calculating $\ord(\tilde Q(\xi))$.

% \begin{namedtheorem}[B1]
%     Suppose $X$ is a conencted finite CW complex of cohomological dimension $d \le 6$, and $\alpha \in \Hoh^3(X, \ZZ)_\tors = \Brt(X)$ is a
%     Brauer class. Then
%     \begin{equation*}
%         \indt(\alpha)\left|\begin{cases}
%             \pert(\alpha)^2     &   \text{if $\pert(\alpha)$ is odd,}\\
%             2\pert(\alpha)^2    &   \text{if $\pert(\alpha)$ is even.}
%         \end{cases}\right.
%     \end{equation*}
%     For any positive integer $n$, there exists a space $X$, a class $\alpha$ of topological period $\pert(\alpha) = n$ and for which the
%     bounds are sharp.
%   \end{namedtheorem}

\begin{namedtheorem}[B]
    Let $n$ be a positive integer, and let $\epsilon(n)$ denote $\gcd(2,n)n$. There exists a connected finite CW complex $X$ of
    dimension $6$ equipped with a class $\alpha \in \Brt(X)$ for which $\pert(\alpha) = n$ and $\indt(\alpha)=\epsilon(n)n$.
\end{namedtheorem}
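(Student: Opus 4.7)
The plan is to take $X$ to be the $6$-skeleton of a CW-model for the Eilenberg--MacLane space $K(\ZZ/n,2)$, and then read off the topological period and index from Theorem~A. Since $K(\ZZ/n,2)$ admits a CW structure with finitely many cells in each dimension (build it from a Moore space $M(\ZZ/n,2)$ by attaching cells to kill higher homotopy), its $6$-skeleton $X := \sk_6 K(\ZZ/n,2)$ is a connected finite CW complex of dimension $6$.

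First I would identify the cohomology of $X$ in low degrees. The cellular cochain complex of $X$ in degrees $\le 5$ coincides with that of $K(\ZZ/n,2)$, so the restriction map induced by the inclusion $X\hookrightarrow K(\ZZ/n,2)$ is an isomorphism on $\Hoh^i(-,\ZZ)$ for $i\le 5$. Using the calculation of $\Hoh^*(K(\ZZ/n,2),\ZZ)$ recalled in Section~\ref{sec:obstrbpu}, one therefore has
\begin{equation*}
  \Hoh^2(X,\ZZ)=0,\qquad \Hoh^3(X,\ZZ)=\ZZ/n,\qquad \Hoh^5(X,\ZZ)=\ZZ/\epsilon(n).
\end{equation*}
Let $\xi\in\Hoh^2(X,\ZZ/n)$ denote the restriction of the tautological class $\iota$, and set $\alpha=\beta_n(\xi)\in\Hoh^3(X,\ZZ)$. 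Then $\alpha$ is a generator of $\ZZ/n$, so $\pert(\alpha)=n$, and $\alpha\in\Brt(X)$ since $X$ is a finite CW complex.

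Next I would compute $\ord(\tilde Q(\xi))$. Because $\Hoh^2(X,\ZZ)=0$, the subgroup $\Hoh^2(X,\ZZ)\smile\alpha$ of $\Hoh^5(X,\ZZ)$ is trivial, so the reduction $\tilde Q(\xi)$ equals $Q(\xi)$ as a class in $\Hoh^5(X,\ZZ)=\ZZ/\epsilon(n)$. By naturality of the operations $\beta_n$, $P_2$ and cup-product, $Q(\xi)$ is the restriction of $Q_n=Q(\iota)$, which generates $\Hoh^5(K(\ZZ/n,2),\ZZ)\iso\ZZ/\epsilon(n)$; since restriction is an isomorphism in this degree, $\tilde Q(\xi)$ has order exactly $\epsilon(n)$.

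Finally, applying Theorem~A(3) to $X$, which has dimension at most $6$, yields
\begin{equation*}
  \indt(\alpha)=\pert(\alpha)\cdot\ord(\tilde Q(\xi))=n\cdot\epsilon(n),
\end{equation*}
as required. There is no serious obstacle in this plan; the only potentially subtle point is verifying that the $6$-skeleton of $K(\ZZ/n,2)$ can be chosen to be a finite CW complex (which is standard) and that restriction to the $6$-skeleton preserves the generator $Q_n$ of $\Hoh^5$, and both follow from elementary cellular-cohomology considerations.
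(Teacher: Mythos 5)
Your proposal is essentially identical to the paper's own proof: both take $X=\sk_6(K(\ZZ/n,2))$, set $\alpha=\beta_n(\iota)$ (equivalently $\beta_n(\xi)$ for $\xi$ the restriction of $\iota$), observe that $\Hoh^2(X,\ZZ)=0$ and that restriction identifies $\Hoh^5(X,\ZZ)\iso\ZZ/\epsilon(n)$ compatibly with $Q$, and then apply Theorem~A(3). The only cosmetic difference is that the paper merely records that the restriction maps are injective in degrees $\le 5$, whereas you note they are isomorphisms, which is also correct.
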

\begin{proof}
    It suffices to take $X = \sk_6(K(\ZZ/n, 2))$, for which $\Hoh^2(X, \ZZ) = 0$, and to take $\alpha = \beta_n(\iota) \in \Hoh^3(X, \ZZ)$,
    a generating $n$-torsion class. Since $\Hoh^i(K(\ZZ/n),2)\rightarrow \Hoh^i(X, \ZZ)$ is
    an injection in the range $0 \le i \le 5$, the
    computation of $\Hoh^*(K(\ZZ/n, 2), \ZZ)$ implies that $\tilde Q(\beta_n(\iota))$  and $G(\alpha)$ have order $\epsilon(n)$. The result is now
    immediate from Theorem A.
\end{proof}

\section{Smooth Complex Varieties}\label{sec:3folds}

The first part of this paper culminates with the proof of Theorems A \& B. The final two sections are devoted to applications of these
results to algebraic problems, first to the algebraic period--index problem and second to the question of representing period-$2$
classes by Clifford algebras.

We turn now to the specific case of smooth complex varieties. We are chiefly interested in the case of smooth projective complex $3$--folds;
any such variety has the homotopy type of a finite $6$--dimensional CW complex and so the topological methods in the preceding sections give
a full description of the topological period--index problem on it. The topological bound of Theorem A can be used to give a lower bound on the classical
index of a class $\alpha \in \Br(X)$ in the classical Brauer group, and the nature of this bound suggests it may be possible to find a
counterexample to the period--index conjecture for function fields at the prime $2$ in dimension $3$. We have not been able to construct
such an example, so we console ourselves by constructing a higher-dimensional variety that exhibits the cohomological behavior that would
disprove the conjecture if it were seen in a $3$--fold. We do not believe that our failure to disprove the period--index conjecture in this
case is evidence in its favor, since few calculations of the integral cohomology of complex $3$--folds appear in the literature, and the
argument we might make would rely on such calculations.

If $X$ is a smooth complex variety and $\alpha\in\Br(X)$, we let $\bar{\alpha}$ denote the image of $\alpha$ under
the natural surjective map $\Br(X)\rightarrow\Br_{\topo}(X)=\Hoh^3(X,\ZZ)_{\tors}$.

%% \marginpar{ I don't think this theorem brings enough to justify its inclusion -- TBJW}

% \begin{theorem} \label{t:3fold}
%     Let $X$ be a smooth projective complex $3$-fold, and let $\alpha\in\Br(X)$. Then,
%     \begin{equation*}
%         \ind_{\topo}(\bar{\alpha})\left|\begin{cases}
%             \per_{\topo}(\bar{\alpha})^2  &   \text{if $\per_{\topo}(\bar{\alpha})$ is odd}\\
%             2\per_{\topo}(\bar{\alpha})^2 &   \text{if $\per_{\topo}(\bar{\alpha})$ is even}.
%         \end{cases} \right .
%     \end{equation*}
%     If $X$ is moreover simply-connected, then
%     $\ind_{\topo}(\bar{\alpha})=\per_{\topo}(\bar{\alpha})$.
%     \begin{proof}
%         The first statement is simply an application of Theorem B. The second statement
%         follows from Theorem A and Poincar\'e duality: if $X$ is simply connected, then
%         $\Hoh^5(X,\ZZ)=0$, so $G(\bar{\alpha})=0$.
%     \end{proof}
% \end{theorem}

%     The topological Brauer group of a function field $\CC(X)$ is defined to be
%     \begin{equation*}
%         \Br_{\topo}(\CC(X))=\colim_{U}\Hoh^3(U,\ZZ)_{\tors}
%     \end{equation*}
%     where $U$ runs over all smooth models for $\CC(X)$.
% \end{definition}

Suppose we wanted to disprove the period--index conjecture. We might essay to find a smooth complex $3$-fold $X$ and a
class $\bar{\alpha}\in\Hoh^3(X,\ZZ)_{\tors}$ such that $\per_{\topo}(\bar{\alpha})=2$ and $\ind_{\topo}(\bar{\alpha})=8$.
Then we could lift it to a class $\alpha\in\Br(X)$ such that $\per(\alpha)=2$ by comparison of \'etale and singular cohomology. The
divisibility relation
\begin{equation*}
    \ind_{\topo}(\bar{\alpha})|\ind(\alpha).
\end{equation*}
is easily shown, and is treated of in \cite{aw1} at any rate.

Write $\Spec K \rightarrow X$ for the generic point of the variety. It is known that $\Br(X)\rightarrow\Br(\Spec K)$ is injective, and so
$\per(\alpha_{K})=\per(\alpha)=2$, where $\alpha_K$ is the image of $\alpha$ under the map.

\begin{proposition}\label{p:Saltman}
    Let $X$ be a regular noetherian scheme with function field $K$, and let $\alpha\in\Br(X)$. Then,
    $\ind(\alpha)=\ind(\alpha_K)$.
\end{proposition}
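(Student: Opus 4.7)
The plan is to establish the two divisibilities $\ind(\alpha_K)\mid\ind(\alpha)$ and $\ind(\alpha)\mid\ind(\alpha_K)$ separately. The first is essentially tautological: if $\mathcal{A}$ is an Azumaya $\mathcal{O}_X$-algebra of degree $n$ with $[\mathcal{A}]=\alpha$, then its generic fiber $\mathcal{A}_K$ is a central simple $K$-algebra of degree $n$ with class $\alpha_K$, so $\ind(\alpha_K)\mid n$ and hence $\ind(\alpha_K)\mid\ind(\alpha)$.

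For the harder direction, let $n=\ind(\alpha_K)$ and choose a division $K$-algebra $D$ of degree $n$ with $[D]=\alpha_K$. I would construct an Azumaya $\mathcal{O}_X$-algebra $\mathcal{A}$ of degree $n$ representing $\alpha$ by taking a coherent $\mathcal{O}_X$-subalgebra $\mathcal{A}\subset D$ which is locally a maximal order; equivalently, I would form the reflexive hull of any $\mathcal{O}_X$-order of maximal generic rank inside $D$. Such a coherent order exists by the standard spreading-out and gluing arguments on a noetherian scheme.

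Next I would verify that $\mathcal{A}$ is Azumaya of degree $n$ by working codimension by codimension. At a codimension-one point $x\in X$, the stalk $\mathcal{O}_{X,x}$ is a discrete valuation ring. Because $\alpha\in\Br(X)$ (as opposed to just $\Br(K)$), purity for the Brauer group on regular schemes implies that $\alpha$ is unramified at $x$, so its image in $\Br(\mathcal{O}_{X,x})$ lifts $\alpha_K$. By a classical theorem of Auslander--Goldman, any maximal order over a DVR whose associated Brauer class is unramified is in fact an Azumaya algebra whose degree equals $\ind(\alpha_K)=n$. Hence the stalk $\mathcal{A}_x$ is Azumaya of degree $n$ at every codimension-one point. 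Cohomological purity for the Brauer group on the regular scheme $X$ (Grothendieck--Gabber) then propagates this from codimension one to all of $X$: an Azumaya algebra defined outside a codimension-two closed subset of a regular noetherian scheme extends uniquely, and this extension must agree with $\mathcal{A}$ by reflexivity. Hence $\mathcal{A}$ is Azumaya of degree $n$ everywhere, giving $\ind(\alpha)\mid n=\ind(\alpha_K)$.

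The main obstacle is the codimension-one analysis together with the purity step: one must know both that maximal orders over a regular local ring of dimension one are Azumaya in the unramified case (Auslander--Goldman) and that such local information assembles, via purity, into a global Azumaya algebra on the regular scheme $X$. Everything else is bookkeeping: once Azumaya-ness is verified at every point, the class of $\mathcal{A}$ in $\Br(X)$ must be $\alpha$ by the injectivity of $\Br(X)\to\Br(K)$ used in the paragraph immediately preceding the proposition.
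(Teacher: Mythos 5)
Your easy direction, $\ind(\alpha_K)\mid\ind(\alpha)$, is correct as stated. The harder direction, however, has a genuine gap at the codimension-two extension step. You need the reflexive hull $\mathcal{A}$ to be locally free (a necessary condition for being Azumaya), but on a regular noetherian scheme of dimension $\geq 3$ a reflexive sheaf need not be locally free, and your invocation of purity does not remedy this. Purity for the Brauer group (Grothendieck--Gabber) guarantees that the Brauer \emph{class} on the open complement of a codimension-two set extends across; it does not guarantee that an Azumaya algebra of fixed degree extends as an Azumaya algebra. The assertion that $\mathcal{A}$ must therefore be Azumaya on all of $X$ is false in general: the present authors showed elsewhere (\cite{aw4}, cited in this paper's bibliography) that there exist unramified division algebras over function fields of smooth projective varieties that do not contain any Azumaya maximal order. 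That result is a direct counterexample to the output of your construction, so the argument cannot be repaired by sharpening the purity input.

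The proposition nonetheless holds because $\ind(\alpha)$ is a \emph{greatest common divisor}, not a minimum, of degrees of Azumaya algebras representing $\alpha$. The paper's proof circumvents the local-freeness problem by working in twisted $K$-theory: an $\alpha_K$-twisted vector bundle of rank $\ind(\alpha_K)$ over $\Spec K$ extends to an $\alpha$-twisted coherent sheaf on $X$ of the same generic rank (no local freeness needed), and regularity of $X$ gives $\G_0^{\alpha}(X)\iso\K_0^{\alpha}(X)$, so the rank map on $\K_0^{\alpha}(X)$ takes the value $\ind(\alpha_K)$. Since that image is $\ind(\alpha)\ZZ$, one gets $\ind(\alpha)\mid\ind(\alpha_K)$ without ever exhibiting a single Azumaya algebra of degree $\ind(\alpha_K)$; the $K$-theoretic argument trades in virtual sheaves, which is exactly the flexibility your maximal-order approach lacks.
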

\begin{proof}[Proof due to D.~Saltman]
      In general it holds that
        \[
            \ind(\alpha_K)|\ind(\alpha),
        \]
        and it suffices to prove the reverse divisibility relation.

        For a given Brauer class $\alpha$, there is a notion of $\alpha$-twisted coherent sheaf, see~\cite{lieblich} for particulars. These
        objects and their morphisms form an abelian category, and we may take the $K$-theory of this category, obtaining a group
        $\G_0^{\alpha}(X)$. Similarly, there is a notion of $\alpha$-twisted vector bundle, and we can produce $K$-groups
        $\K_0^{\alpha}(X)$. The index of $\alpha$ may be described as a generator of the image of the natural rank map
        $\K_0^{\alpha}(X)\rightarrow\ZZ$. Any $\alpha_K$-twisted vector bundle over $\Spec K$ with rank $\ind(\alpha_K)$ can be extended,
        via extension of coherent sheaves, to an $\alpha$-twisted coherent sheaf on $X$ with rank $\ind(\alpha_K)$. Since $X$ is regular,
        $\G_0^{\alpha}(X)\iso\K_0^{\alpha}(X)$, so we see that
        \[
            \ind(\alpha)|\ind(\alpha_K).
        \]
        The result follows.
    \end{proof}

In summary, from Theorem A and Proposition \ref{p:Saltman}, we deduce

\begin{namedtheorem}[C]\label{thmc}
  If there exists a smooth projective complex $3$-fold $X$ and a $2$-torsion class $\alpha\in\Br(X)$ such that $2G(\bar{\alpha})\neq
  0$, where $\bar{\alpha}$ is the image of $\alpha$ in $\Hoh^3(X,\ZZ)_{\tors}$, then the period--index conjecture fails at the prime
  $2$ for the function field $\CC(X)$ of $X$: the image of the class $\alpha$ in $\Br(\CC(X))$ has period $2$ and index at least $8$.
\end{namedtheorem}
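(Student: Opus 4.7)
The plan is to chain together Theorem A, Proposition \ref{p:Saltman}, and the known divisibility $\indt(\bar\alpha) \mid \ind(\alpha)$ from \cite{aw1}, exploiting the fact that a smooth projective complex $3$-fold has the homotopy type of a finite $6$-dimensional CW complex and therefore falls within the equality case of Theorem A.

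First I would observe that $\pert(\bar\alpha) = 2$: the class $\bar\alpha$ is $2$-torsion since $\alpha$ is, and it cannot be trivial because the hypothesis $2 G(\bar\alpha) \neq 0$ forces $G(\bar\alpha) \neq 0$, which already requires $\bar\alpha$ to be of positive order. Next, because $X$ is a smooth projective complex $3$-fold, it has the homotopy type of a closed real $6$-manifold, hence of a finite CW complex of dimension $6$. The equality statement in Theorem A (equivalently, equation \eqref{eq:div2} of Theorem \ref{thm:wasA}) then gives
\begin{equation*}
  \indt(\bar\alpha) = \pert(\bar\alpha)\,\ord(G(\bar\alpha)) = 2\,\ord(G(\bar\alpha)).
\end{equation*}

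Part (2) of Theorem A tells us that $\ord(G(\bar\alpha))$ divides $2\pert(\bar\alpha) = 4$, so it lies in $\{1,2,4\}$. The hypothesis $2G(\bar\alpha)\neq 0$ then forces $\ord(G(\bar\alpha)) = 4$, whence $\indt(\bar\alpha) = 8$. Combining the divisibility $\indt(\bar\alpha) \mid \ind(\alpha)$, proved in \cite{aw1}, with Proposition \ref{p:Saltman} applied to the regular noetherian scheme $X$ with function field $K = \CC(X)$ yields
\begin{equation*}
  8 = \indt(\bar\alpha) \mid \ind(\alpha) = \ind(\alpha_K).
\end{equation*}
Finally, the injectivity of $\Br(X) \to \Br(\CC(X))$ for a regular integral scheme guarantees $\per(\alpha_K) = \per(\alpha) = 2$, and so the pair $(\per(\alpha_K), \ind(\alpha_K))$ violates the bound $\ind \mid \per^{d-1} = 4$ predicted by Conjecture \ref{conj:perind} in dimension $d = 3$.

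There is really no single hard step here; the result is a clean synthesis of inputs established earlier in the paper. The only subtlety I would be careful about is verifying that Theorem A may be applied with equality, which requires the dimension bound on $X$, and that Proposition \ref{p:Saltman} applies, which requires regularity of $X$—both are automatic for smooth projective $3$-folds. The substantive content of the theorem is therefore not its proof but its hypothesis: producing an $X$ and $\alpha$ with $2G(\bar\alpha)\neq 0$ is the actual difficulty, and that is deferred to the subsequent discussion.
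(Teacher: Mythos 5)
Your proof is correct and follows essentially the same chain as the paper's: use the dimension bound to invoke the equality case of Theorem A to get $\indt(\bar\alpha)=8$, then combine $\indt(\bar\alpha)\mid\ind(\alpha)$, the injectivity of $\Br(X)\to\Br(\CC(X))$, and Proposition~\ref{p:Saltman} to transfer the statement to the function field. The paper presents the argument in the same order and with the same ingredients; Theorem C is likewise stated there as a summary of the preceding discussion rather than given a separate proof block.
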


One may compare this theorem to an unpublished result of Daniel Krashen saying that if $X$ is a complex $3$-fold and $\alpha\in\Br(\CC(X))$ has period $2$,
then $\ind(\alpha)$ divides $8$.

Although it is beyond us at present to construct an example of a smooth projective complex $3$--fold of the kind called for in Theorem C, we
are able to construct the following.

\begin{proposition} \label{p:YXcons} Let $n$ be a positive integer, and $r$ a divisor of $\epsilon(n)$. Then there exists a connected smooth affine
  variety $X_{n,rn}$ of dimension $5$ such that $\Hoh^i(X_{n,rn}, \ZZ) = 0$ for $i=1,2$, and a class $\alpha_X \in \Brt(X_{n,rn})$ such that
  there is a natural isomorphism
  \[\Br(X_{n,rn}) \iso \Brt(X_{n,rn}) \iso \ZZ/n \cdot \alpha_X \]
  and such that $\ord(G(\alpha_X)) = r$. 
\end{proposition}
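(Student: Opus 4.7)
The plan is to construct $X_{n,rn}$ as an algebraic approximation to the classifying space $\B\Pc(n,rn)$, which topologically carries a universal Brauer class with the desired invariants. The proof has a topological part, where I verify the invariants on $\B\Pc(n,rn)$, and an algebraic part, where I realize this topological model as a smooth affine $5$-fold.

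For the topological part, the universal class $\alpha$ on $\B\Pc(n,rn)$ lies in $\Hoh^3(\B\Pc(n,rn),\ZZ)_\tors$ with $\pert(\alpha)=n$, since $\pi_2(\B\Pc(n,rn))=\ZZ/n$. To compute $\ord(G(\alpha))$ I invoke Theorem~\ref{thm:relations}: the universal lift $\xi\in\Hoh^2(\B\Pc(n,rn),\ZZ/n)$ is the pullback of the fundamental class $\iota$ under the Postnikov projection $\B\Pc(n,rn)\to\B\Pc(n,rn)[2]=K(\ZZ/n,2)$, and since $\Hoh^2(\B\Pc(n,rn),\ZZ)=0$ the class $\tilde Q(\xi)$ is simply the image of $Q_n=Q(\iota)$ in $\Hoh^5(\B\Pc(n,rn),\ZZ)$. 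Applying the functoriality of $k$-invariants (as in the proof of Lemma~\ref{l:mult}) to the $r$-fold direct summation map $\BPU_n\to\B\Pc(n,rn)$, which by Lemma~\ref{lem:homotopymaps} is the identity on $\pi_2$ and multiplication by $r$ on $\pi_4=\ZZ$, the classifying map $\tilde f\in\Hoh^5(K(\ZZ/n,2),\ZZ)$ of the extension $\B\Pc(n,rn)[4]\to K(\ZZ/n,2)$ equals $r\cdot f_n$ up to a unit. Since $f_n$ generates $\ZZ/\epsilon(n)=\Hoh^5(K(\ZZ/n,2),\ZZ)$ by Proposition~\ref{p:generator}, the Serre spectral sequence for the fibration $K(\ZZ,4)\to\B\Pc(n,rn)[4]\to K(\ZZ/n,2)$ yields that the image of $Q_n$ in $\Hoh^5(\B\Pc(n,rn),\ZZ)$ has order exactly $r$, so $\ord(G(\alpha))=r$.

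For the algebraic realization, I would use a Totaro-style approximation of the classifying space. Choose a faithful representation $V$ of the reductive algebraic group $\Pc(n,rn)=\SL_{rn}/\mu_n$, and let $U\subset V^N$ be an open subset on which $\Pc(n,rn)$ acts freely with complement of codimension at least $7$; this can be arranged by taking $N$ sufficiently large. Then $U/\Pc(n,rn)$ is a smooth quasi-projective complex variety whose integral cohomology agrees with $\Hoh^*(\B\Pc(n,rn),\ZZ)$ through degree $6$. One then cuts down to dimension $5$ via a sufficiently general complete intersection, maintaining cohomological agreement through degree $6$ by adjusting the codimension bound of the non-free locus, and finally applies Jouanolou's device to produce a smooth affine complex $5$-fold $X_{n,rn}$ of the same homotopy type. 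The cohomological properties then follow from those of $\B\Pc(n,rn)$: $\Hoh^i(X_{n,rn},\ZZ)=0$ for $i=1,2$; $\Brt(X_{n,rn})=\Hoh^3(X_{n,rn},\ZZ)_\tors=\ZZ/n\cdot\alpha_X$; and $\Br(X_{n,rn})=\Brt(X_{n,rn})$ since $X_{n,rn}$ is affine (so $\Br=\Br'$ by Gabber) and smooth over $\CC$ (so $\Br'=\Brt$ by comparison of \'etale and singular cohomology). The value $\ord(G(\alpha_X))=r$ is inherited from the topological computation.

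The main obstacle is the algebraic step: producing a smooth affine variety of the constrained dimension $5$ whose cohomology agrees with $\B\Pc(n,rn)$ through degree $6$. The tension is between the need for the complement of the free locus to have large codimension (to preserve cohomology in a sufficient range of degrees) and the need to reduce to dimension $5$. Resolving this requires careful dimension counting for the Totaro approximation together with Lefschetz-type arguments controlling the cohomology of linear sections.
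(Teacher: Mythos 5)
Your proposal takes essentially the same route as the paper: the topological verification is identical (compute the $k$-invariant of $\B\Pc(n,rn)[4]\to K(\ZZ/n,2)$ as $r\cdot f_n$ via the $r$-fold summation map and Lemma~\ref{lem:homotopymaps}, read off $\Hoh^5(\B\Pc(n,rn),\ZZ)\iso\ZZ/r\cdot Q(\xi)$ from the Serre spectral sequence, then invoke Theorem~A), and the algebraic realization also uses the Totaro approximation, Jouanolou's device, and Lefschetz-type cutting.

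There is, however, a genuine gap in your ordering of the algebraic steps. You cut your quasi-projective Totaro approximation down to dimension $5$ by a generic complete intersection and \emph{then} apply Jouanolou's device. But Jouanolou's construction replaces a quasi-projective variety by the total space of an affine-space torsor over it, so it strictly increases the dimension (by the dimension of the ambient projective space in the standard construction); the output is affine and homotopy equivalent to the input, but it is certainly not a $5$-fold. Consequently your final variety is affine with the right cohomology, but not of dimension $5$ as the statement requires. The paper fixes this by doing the steps in the opposite order: first apply Jouanolou to produce a (high-dimensional) smooth affine approximation, and only then use the affine Lefschetz hyperplane theorem of Goresky--MacPherson to take a generic linear section of dimension $5$, which remains affine, preserves $\Hoh^{\le 4}$ isomorphically, and injects on $\Hoh^5$. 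With that reordering your argument goes through.
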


In particular, we can find a smooth affine $5$--fold $X_{2,8}$ on which there exists a Brauer class $\alpha$ of topological period $2$ with $\ord(G(\alpha))
=4$, and since affine $n$--folds have the homotopy type of a CW complex of dimension $n$, Theorem A asserts the index is $\indt(\alpha) =
8$.

\begin{lemma} \label{l:BrIso}
  If $X$ is a smooth complex variety, then the natural map $\Br(X) \to \Brt(X)$ is surjective. If the map $\Pic(X) \to \Hoh^2(X, \ZZ)$ is
  surjective as well, then $\Br(X) \iso \Brt(X)$.
\end{lemma}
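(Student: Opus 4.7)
The plan is to compare the étale Kummer sequence on $X$ with the coefficient-Bockstein sequence on $X^{\an}$ by way of Artin's comparison theorem. For each positive integer $n$, Artin's theorem provides a canonical isomorphism $\Hoh^i_\et(X,\mu_n)\iso\Hoh^i(X^{\an},\ZZ/n)$ (after a choice of primitive roots of unity) compatible with cup products, boundary maps of short exact coefficient sequences, and the Chern class map $c_1:\Pic(X)\to\Hoh^2(X,\ZZ)$.

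Using this, I would assemble, for each $n\geq 1$, a commutative ladder
\begin{equation*}
    \xymatrix{
        \Pic(X)/n\ar@{^(->}[r]\ar[d]_{c_1} & \Hoh^2_\et(X,\mu_n)\ar@{->>}[r]\ar[d]_{\iso} & {}_n\Br(X)\ar[r]\ar[d] & 0\\
        \Hoh^2(X,\ZZ)/n\ar@{^(->}[r] & \Hoh^2(X,\ZZ/n)\ar@{->>}[r]^{\beta_n} & {}_n\Hoh^3(X,\ZZ)\ar[r] & 0,
    }
\end{equation*}
whose top row is extracted from the Kummer sequence $1\to\mu_n\to\Gm\xrightarrow{n}\Gm\to 1$ and whose bottom row is extracted from $0\to\ZZ\xrightarrow{n}\ZZ\to\ZZ/n\to 0$. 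One should note that the Kummer boundary factors through the genuine (Azumaya) Brauer group $\Br(X)$, since $\Hoh^2_\et(X,\mu_n)$ classifies the obstructions to lifting $\PGL_n$-torsors to $\GL_n$-torsors; this is what places the right vertical arrow in $\Br(X)$ rather than only in $\Br'(X)$.

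With the ladder in hand, the two assertions follow by diagram chase. Given $\bar\alpha\in\Brt(X)$ of order $n=\pert(\bar\alpha)$, the bottom row produces $\xi\in\Hoh^2(X,\ZZ/n)$ with $\beta_n(\xi)=\bar\alpha$; transferring $\xi$ across the central isomorphism and pushing along the top row furnishes a class $\alpha\in\Br(X)$ mapping to $\bar\alpha$, which proves surjectivity. For injectivity under the additional hypothesis, suppose $\alpha\in\Br(X)$ has trivial image in $\Brt(X)$ and, having some order dividing $n$, is represented by $\tilde\xi\in\Hoh^2_\et(X,\mu_n)$ under the Kummer boundary. The corresponding class $\xi\in\Hoh^2(X,\ZZ/n)$ satisfies $\beta_n(\xi)=0$, hence lifts to some $\eta\in\Hoh^2(X,\ZZ)$. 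The surjectivity of $\Pic(X)\to\Hoh^2(X,\ZZ)$ writes $\eta=c_1(L)$ for some line bundle $L$, and commutativity of the ladder then places $\tilde\xi$ in the image of $\Pic(X)/n$, so exactness of the top row gives $\alpha=0$. The main technical points are the construction of the ladder and the Azumaya-algebra interpretation of the Kummer boundary; both are standard, and no appeal to $\Br(X)=\Br'(X)$ is required since $\Br(X)\hookrightarrow\Br'(X)$ is automatic.
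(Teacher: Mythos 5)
Your ladder and diagram chase are the same as the paper's, but there is a genuine error in the claim that you can avoid the identity $\Br(X)=\Br'(X)$. The Kummer sequence $1\to\mu_n\to\Gm\to\Gm\to 1$ produces a surjection $\Hoh^2_\et(X,\mu_n)\twoheadrightarrow \Br'(X)[n]$ onto the $n$-torsion in the \emph{cohomological} Brauer group, not onto $\Br(X)[n]$. The assertion that this boundary ``factors through $\Br(X)$ because $\Hoh^2_\et(X,\mu_n)$ classifies obstructions to lifting $\PGL_n$-torsors to $\GL_n$-torsors'' conflates two different things: the obstruction to lifting a $\PGL_n$-torsor lives in $\Hoh^2_\et(X,\Gm)$ (or, via $1\to\mu_n\to\SL_n\to\PGL_n\to 1$, in $\Hoh^2_\et(X,\mu_n)$), but the map $\Hoh^1_\et(X,\PGL_n)\to\Hoh^2_\et(X,\mu_n)$ is not surjective, and an arbitrary class in $\Hoh^2_\et(X,\mu_n)$ has no a priori reason to be the class of an Azumaya algebra. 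Whether the image $\Br'(X)[n]$ actually lies in $\Br(X)$ is precisely the content of $\Br=\Br'$. For a smooth quasi-projective complex variety this holds by Gabber's theorem, and the paper invokes it explicitly (``the hypotheses on $X$ ensure that $\Br(X)=\Br'(X)$ and $\Brt(X)=\Brt'(X)$'') rather than trying to sidestep it.

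Once that appeal is reinstated, everything else in your argument is correct and matches the paper: after identifying the étale $\ZZ/n$-cohomology with singular $\ZZ/n$-cohomology by the Artin comparison theorem, both the surjectivity chase and the injectivity chase using the surjectivity of $c_1:\Pic(X)\to\Hoh^2(X,\ZZ)$ go through exactly as you describe. You should also acknowledge the parallel use of $\Brt(X)=\Brt'(X)=\Hoh^3(X,\ZZ)_\tors$ (Serre's theorem, valid because smooth complex varieties have the homotopy type of finite CW complexes), which you use implicitly when lifting $\bar\alpha$ to $\Hoh^2(X,\ZZ/n)$ via the Bockstein.
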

\begin{proof}
  The hypotheses on $X$ ensure that $\Br(X) = \Br'(X)$ and $\Brt(X) = \Brt'(X)$.  For any integer $n$ there exists a natural map of long exact sequences
  \begin{equation*}
    \xymatrix{ \Pic(X) \ar[r] \ar[d] & \Hoh_\et^2(X, \mu_n) \ar@{=}[d] \ar[r] & \Br(X) \ar^{\times n}[r] \ar[d] & \Br(X) \ar[d] \\
      \Hoh^2(X, \ZZ) \ar[r] & \Hoh^2(X, \ZZ/n) \ar[r] & \Brt(X) \ar^{\times n}[r] & \Brt(X) }
  \end{equation*}
  If $\alpha \in \Brt(X)$, then $\alpha$ has finite order, so there is some $n$ for which $\alpha$ is in the image of the map $\Hoh^2(X,
  \ZZ/n)$. By a diagram-chase, it follows that $\alpha$ is in the image of the map $\Br(X) \to \Brt(X)$.

  If $\gamma \in \Br(X)$, and $\gamma$ has order $n \neq 1$, then we may lift $\gamma$ to a class in $\Hoh^2_\et(X, \mu_n)$; a diagram-chase now
  shows that if $\Pic(X) \to \Hoh^2(X, \ZZ)$ is surjective, the image of $\gamma$ in $\Brt(X)$ is nonzero.
\end{proof}

\begin{proof}[Proof of Proposition \ref{p:YXcons}]
  The natural $r$-fold summation map $\BPU_n=\B \Pc(n,n) \to \B \Pc(n, rn)$ gives rise to the following square in the
  low-dimensional part of the Postnikov towers, by reference to the results of Section \ref{sec:obstrbpu},
  \begin{equation*}
    \xymatrix{ K(\ZZ/n, 2) \ar[r] \ar@{=}[d] &  K( \ZZ, 5) \ar^{1 \mapsto r}[d] \\ K(\ZZ/n, 2) \ar^{f_n}[r] & K(\ZZ, 5) }
  \end{equation*}
  from which it follows that the extension $K(\ZZ,5) \to  \B\Pc(n,rn)[4] \to \B\Pc(n,rn)[3]$ is classified by $r$ times a generator of
  $\Hoh^5(K(\ZZ/n, 2), \ZZ)$. We deduce, from a Serre spectral sequence for instance, that $\Hoh^i(\B \Pc(n,rn), \ZZ) = 0$ for $i=1,2$, that
  \begin{equation*} \begin{split}
      \Hoh^3(\B \Pc(n,rn), \ZZ)  \iso \Hoh^2(\B \Pc(n,rn), \ZZ/n) = \Hoh^2(\B \Pc(n,rn)[3], \ZZ/n) = \Hoh^2(K(\ZZ/n, 2), \ZZ/n) = \ZZ/n \cdot
      \xi \end{split}
  \end{equation*}
  and also that
  \begin{equation*}
    \Hoh^5(\B \Pc(n,rn) ,\ZZ)  = \Hoh^5( \B \Pc(n,rn)[4], \ZZ) =\ZZ/r \cdot Q(\xi) 
  \end{equation*}
  where $Q(\xi)$ is used in the same sense as in the statements of Theorems \ref{thm:relations} and B. Defining $\alpha = \beta_n(\xi)$, and
  observing that $\Hoh^2(\B \Pc(n,rn), \ZZ) = 0$, we employ Theorem A to deduce that $\ord(G(\alpha)) = r$.

  The space $\B \Pc(n,rn)$ is not a complex variety, although it is homotopy equivalent to
  $\B \SL_{rn}(\CC)/\mu_n$, which is the classifying space of
  an algebraic group. There are a number of results regarding the approximation of
  classifying spaces of complex algebraic groups by complex varieties; following
  \cite{totaro} we may construct a quasi-projective variety which approximates $\B
  \SL_{rn}(\CC)/\mu_n$ in that there is a map $X_{n,rn} \to \B\SL_{rn}(\CC)/\mu_n$
  that induces an isomorphism on cohomology groups $\Hoh^{\le 4}(\cdot, \ZZ)$ and an injection on $\Hoh^5(\cdot, \ZZ)$ . By use of
  Jouanolou's device~\cite{jouanolou}, $X_{n,rn}$ can be assumed to be affine, and by use of the affine Lefschetz hyperplane theorem, \cite{goresky-macpherson}, it can be
  assumed to have dimension $5$.
\end{proof}

The spaces $\B\SL_{rn}(\CC)/\mu_n$ are like universal examples of spaces with period-$n$, degree-$rn$ projective bundles, so it
should not be a surprise that we might find classes of period $n$ and index $rn$ in the Brauer groups of $X_{n,rn}$.

The same kind of argument can be used to construct a smooth projective $5$-fold with a Brauer class
of period $n$ and topological index $rn$. Starting from the algebraic group
$G=\SL_{rn}/\mu_n \times \Gm$,
which has positive-dimensional center, we may use a result of Ekedahl from \cite{ekedahl}
constructing a smooth projective variety $Y_{n,rn}$ the cohomology of which agrees with that of $\B \Pc(n,rn) \times \CC P^\infty$ in the range
$\Hoh^{\le 4}(\cdot, \ZZ)$ and for which there is an injection on $\Hoh^5(\cdot, \ZZ)$. The variety $Y_{n,rn}$ may be constructed so that
$\Pic(Y_{n,rn}) = \Hoh^2(Y_{n,rn}) \iso \ZZ$, and by using the Lefschetz hyperplane theorem and a Lefschetz-type theorem for the Picard
group, we may assume $Y_{n,rn}$ is $5$-dimensional.

\section{A Question of Hahn on Quadratic Forms}\label{sec:hahn}

Let $E$ be an $m$-dimensional vector space over a field $k$ of characteristic unequal to $2$, and let $q$ be a quadratic form on $E$. The
Clifford algebra $\Cl(q)$ is defined to be the quotient of the tensor algebra
\begin{equation*}
    T(E)=k\oplus E\oplus (E\otimes_k E)\oplus\cdots
\end{equation*}
by the ideal generated by 
\begin{equation}
  \label{eq:7}
  x\otimes x+q(x)\cdot 1
\end{equation}
for all $x\in E$.  We will need only the rudiments of the theory of Clifford algebras
over the complex field and we refer the reader to \cite{lawson-michelsohn-book} for a fuller treatment of the topic than we provide.
The algebra $\Cl(q)$ is associative and has dimension $2^m$, and the original vector space $E \subset \Cl(q)$ is a generating subset.

If $X$ is a connected topological space, then a \textit{complex quadratic bundle} on $X$ is a complex vector bundle $\mathcal{E}$ on $X$ and
a map $b: \mathcal{E} \to \mathcal{E}^*$ of vector bundles. The map on stalks $b_x: \mathcal{E}_x \to \mathcal{E}_x^*$ is used to define a bilinear form
$b_x(\alpha ,\beta ) \in\CC$ on the vector space $\mathcal{E}_x$, and from that one defines the quadratic form $q(\alpha ) = b_x(\alpha,
\alpha)$. Since $\CC$ is not of characteristic $2$, the theory of bilinear and quadratic forms agree. A quadratic bundle $(\mathcal{E}, b)$
is said to be \textit{nondegenerate} if $b: \mathcal{E} \to \mathcal{E}^*$ is injective, and similarly a quadratic form on a vector space is
nondegenerate if the associated bilinear form $b: V \to V^*$ is injective. In the case of finite rank or dimension, injectivity is
synonymous with isomorphism.

Every nondegenerate quadratic form on the $m$-dimensional vector space $E \iso \CC^m$ is isomorphic to the standard form:
\begin{equation*}
    q(x)=x_1^2+\cdots +x_m^2.
\end{equation*}
We denote the Clifford algebra of the standard form by $\CCl_m$. There are isomorphisms
\begin{align*}
    \CCl_{2n} &= \Mat_{2^n \times 2^n}(\CC)  \qquad  \text{if $m=2n$,} \hfill \\
    \CCl_{2n+1} &= \Mat_{2^n \times 2^n}(\CC) \oplus \Mat_{2^n \times 2^n}(\CC)  \quad \text{if $m=2n+1$.}
\end{align*}
We consider only the case of even $m=2n$ in the sequel.

There is a subgroup of $\CCl_{2n}^\times$, denoted by $\Pin_{2n}(\CC)$, generated by elements $e \in E$ for which $q(e) = \pm 1$, and a
connected subgroup of $\Pin_{2n}(\CC)$ denoted by $\Spin_{2n}(\CC)$, generated by the products of even numbers of such $e \in E$. When $n\ge
2$, the group $\Spin_{2n}(\CC)$ is simply-connected. In the case at hand, $\CCl_{2n}^\times$ is the group $\GL_{2^n}(\CC)$ and so
$\Spin_{2n}(\CC)$ is a subgroup of $\SL_{2^n}(\CC)$. The group $\SL_{2^n}(\CC)$ acts by conjugation as algebra homomorphisms on $\CCl_{2n} =
\Mat_{2^n \times 2^n}(\CC)$; the kernel of this action is the center, $\mu_{2^n}$, and the image is the group, $\PGL_{2^n}(\CC)$, of all
algebra homomorphisms of $\Mat_{2^n \times 2^n}(\CC)$.

Using the defining relation of \eqref{eq:7}, one can show that if $e_1, e_2 \in E$ are elements satisfying $q(e_1)^2 = q(e_2)^2 = 1$,
and if $v \in E$ is some third element, viewed as an element of $\CCl(E)$, that $e_1 e_2 v e_2^{-1} e_1^{-1}$ is again an element of $E
\subset \CCl(E)$, being the element given by reflection of $v$ in the hyperplanes $e_2^{\perp}$ and $e_1^{\perp}$. This means that the
conjugation action of $\Spin_{2n}(\CC)$ on $\CCl_{2n}$ fixes $E$, and since it acts on $E$ by composites of pairs of reflections, this describes a
homomorphism $\Spin_{2n}(\CC) \to \SO_{2n}(\CC)$. This homomorphism is the universal covering map of the group $\SO_{2n}(\CC)$, and has kernel $\{ \pm
I_{2^n} \} = \mu_2$.

We have constructed a map of short exact sequences of groups
\begin{equation}
  \label{eq:8}
  \xymatrix{ 1 \ar[r] & \mu_{2^n} \ar[r] & \SL_{2^n}(\CC) \ar[r] & \PGL_{2^n}(\CC) \ar[r] & 1\\
    1 \ar[r] & \mu_2 \ar[u]  \ar[r] & \Spin_{2n}(\CC) \ar[u] \ar[r] & \SO_{2n}(\CC) \ar[r] \ar[u] & 1.}
\end{equation}
There is an induced map $\Cl: \B \SO_{2n}(\CC) \to \B \PGL_{2^n}(\CC)$ which has the property that it induces the nontrivial map $\ZZ/2 \to
\ZZ/(2^n)$ on the homotopy groups $\pi_2(\B\SO_{2n}(\CC)) \to \pi_2(\B\PGL_{2^n})$. This last fact holds when $n\ge 2$ since
$\Spin_{2n}(\CC)$ and $\SL_{2n}(\CC)$ are simply-connected, and holds for trivial reasons when $n=1$. The space $\B \SO_{2n}(\CC)$
represents rank-$2n$ vector bundles that are equipped with nondegenerate quadratic forms, and for which a certain invariant, the
discriminant, vanishes; the last condition being the condition that permits the lifting of the structure group of a quadratic vector bundle
from $\mathrm{O}_{2n}(\CC)$ to $\SO_{2n}(\CC)$.

The map $\Cl$ represents a natural construction that associates to the data $(\mathcal{E}, q)$ the
principal $\PGL_{2^n}(\CC)$-bundle associated to the bundle $\Cl(\mathcal{E}, q)$ of matrix algebras. This is called the \textit{Clifford
  map}. The class $[\Cl(\mathcal{E},q)] \in \Brt(X)$ is $2$-torsion, since it is the image under the classifying map of the nontrivial
element in $\Brt(\B \SO_{2n}(\CC)) = \pi_2(\B \SO_{2n}(\CC)) = \ZZ/2$. The class does not change if one adds a trivial $\CC^2$-bundle with trivial quadratic form to
$(\mathcal{E}, q)$, and so the Clifford map extends to a map to a map, the \textit{Clifford invariant map}, $\Cl: I_\topo^2(X) \to {}_2 \Brt(X)$, where the notation $I_\topo^2(X)$ denotes the subgroup of the
Witt group of $X$ generated by quadratic bundles $(\mathcal{E},q)$ having even rank and trivial discriminant.

The Clifford invariant map is the analogue in the setting of complex quadratic vector bundles on topological spaces of a Clifford invariant
map $I^2(X) \to \Br(X)$ for schemes, which generalizes a norm residue map $I^2_F \to \Br(F)$ for fields. The norm residue map for fields was
shown to be surjective by Merkurjev in \cite{merkurjev}, a result now subsumed in the
Norm--Residue Isomorphism Theorem proved by Rost and Voevodsky. If $X$ is a complex variety,
because the Brauer class is formed in both cases as the class associated to a Clifford algebra of a quadratic bundle, there is a comparison
diagram:
\[ \xymatrix{ I^2(X) \ar[r] \ar[d] & {}_2\Br(X) \ar[d] \\ I^2_\topo(X) \ar[r] & {}_2\Brt(X). } \] In the remainder of this section, we prove
that the image of the map $I_\topo^2(X) \to {}_2 \Brt(X)$ cannot contain any classes $\alpha$ for which $\ord(G(\alpha)) =4$. We do this by
proving that $\ord(G(\alpha)) = 2$ where $\alpha$ is the class of the universal stable $\SO$ bundle. By comparison, this imposes a
restriction on the classes which may be in the image of the algebraic Clifford invariant map $I^2(X) \to \Br(X)$. We are then able to
exhibit an affine variety $X$ for which $\Pic(X) = 0$, but for which the Clifford invariant map is not surjective.

\medskip

We refer the reader to the paper of Brown, \cite{brown}, where cohomology of $\BSO_n$ is calculated in full. We make use of the
calculation only in the limiting case of $\BSO$, and then only in low degrees.
\begin{proposition}
  In low degrees, the cohomology $\Hoh^*(\BSO, \ZZ)$ is
  \begin{gather*}
    \Hoh^1 (\BSO, \ZZ) = \Hoh^2( \BSO, \ZZ) = 0, \quad \Hoh^3(\BSO, \ZZ) = \ZZ/2 \cdot \beta(w_2), \\
    \Hoh^4(\BSO, \ZZ) = \ZZ \cdot p_1, \quad \Hoh^5(\BSO, \ZZ/2) = \ZZ/2 \cdot \beta(w_4).
  \end{gather*}
\end{proposition}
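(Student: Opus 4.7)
The plan is to extract the four low-degree groups from Brown's complete calculation, which we may invoke for three pieces of standard input: the Borel presentation $\Hoh^*(\BSO,\ZZ/2) = \FF_2[w_2, w_3, w_4, \ldots]$; the rational calculation $\Hoh^*(\BSO,\QQ) = \QQ[p_1, p_2, \ldots]$, which tells us that the torsion-free summand of $\Hoh^i(\BSO,\ZZ)$ in the range $i \le 5$ is $\ZZ \cdot p_1$ concentrated in degree $4$; and the fact that every torsion element of $\Hoh^*(\BSO,\ZZ)$ has order $2$. Given these inputs, the proof becomes a short bookkeeping argument with the universal coefficient formula and the Bockstein.

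First I would list the $\FF_2$-bases in degrees $0$ through $5$, which are $\{1\}$, $\emptyset$, $\{w_2\}$, $\{w_3\}$, $\{w_4, w_2^2\}$, $\{w_5, w_2 w_3\}$. Then, using the identity
\begin{equation*}
  \dim_{\FF_2} \Hoh^n(\BSO,\ZZ/2) = \rank \Hoh^n(\BSO,\ZZ) + \dim_{\FF_2}\, {}_2\Hoh^n(\BSO,\ZZ) + \dim_{\FF_2}\, {}_2\Hoh^{n+1}(\BSO,\ZZ),
\end{equation*}
which comes from universal coefficients together with the fact that the torsion is of exponent $2$, I would solve degree by degree for $n \le 5$ to conclude $\Hoh^1(\BSO,\ZZ) = \Hoh^2(\BSO,\ZZ) = 0$, $\Hoh^3(\BSO,\ZZ) \cong \ZZ/2$, $\Hoh^4(\BSO,\ZZ) \cong \ZZ$, and $\Hoh^5(\BSO,\ZZ) \cong \ZZ/2$.

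Finally I would identify explicit generators. The integral Bockstein satisfies $\rho \circ \beta = \mathrm{Sq}^1$, and on $\BSO$ the Wu formula reduces to $\mathrm{Sq}^1 w_{2k} = w_{2k+1}$ because $w_1 = 0$; in particular $\mathrm{Sq}^1 w_2 = w_3$ and $\mathrm{Sq}^1 w_4 = w_5$ are both nonzero. Consequently $\beta(w_2)$ is a nonzero element of the cyclic group $\Hoh^3(\BSO,\ZZ) \cong \ZZ/2$, hence generates it; similarly $\beta(w_4)$ generates $\Hoh^5(\BSO,\ZZ) \cong \ZZ/2$; and $p_1$ generates the unique free summand in degree $4$ by definition.

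I do not expect a genuine obstacle in this argument: the only step that is not entirely formal is the input that all torsion in $\Hoh^*(\BSO,\ZZ)$ has order $2$, and this is precisely the statement we are importing from Brown. Everything else is combinatorial manipulation of Stiefel--Whitney monomials together with the Wu relation $\mathrm{Sq}^1 w_{2k} = w_{2k+1}$ valid in the oriented setting.
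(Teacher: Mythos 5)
Your argument is correct, and it amounts to a fleshed-out version of what the paper does implicitly: the paper offers no proof of this proposition at all, merely pointing the reader to Brown's complete calculation of $\Hoh^*(\BSO_n,\ZZ)$ in the sentence immediately preceding the statement. You instead extract three standard facts (the mod-$2$ polynomial algebra on Stiefel--Whitney classes, the rational polynomial algebra on Pontryagin classes, and the exponent-$2$ property of the torsion, the last being the genuinely nontrivial import from Brown) and then run universal coefficients degree by degree to recover the stated groups. The dimension count is right: writing $t_n$ for the $\FF_2$-dimension of the torsion in $\Hoh^n(\BSO,\ZZ)$ and $r_n$ for the rank, the relation $\dim_{\FF_2}\Hoh^n(\BSO,\ZZ/2) = r_n + t_n + t_{n+1}$ gives $t_0=t_1=t_2=0$, $t_3=1$, $t_4=0$, $t_5=1$ from the data $1,0,1,1,2,2$ in degrees $0,\ldots,5$ and $r_4=1$, $r_n=0$ otherwise in this range. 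The identification of generators via $\rho\circ\beta=\mathrm{Sq}^1$ and the Wu relation $\mathrm{Sq}^1 w_{2k}=w_{2k+1}$ (valid since $w_1=0$ on $\BSO$) is also correct. One small point worth flagging: the proposition as printed reads $\Hoh^5(\BSO,\ZZ/2)$, but this is evidently a typo for $\Hoh^5(\BSO,\ZZ)$, since $\beta(w_4)$ is an integral class and $\Hoh^5(\BSO,\ZZ/2)$ has dimension $2$, not $1$; you have correctly interpreted the intended statement.
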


\begin{proposition}
  There is an identification: $G(\beta(w_2))= \beta(w_4)$.
\end{proposition}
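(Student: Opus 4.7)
The plan is to combine Theorem~A (whose identification between $G$ and $\tilde Q$ is encoded in Theorem~\ref{thm:relations}) with an identity from Brown's calculation~\cite{brown} of $\Hoh^*(\BSO,\ZZ/4)$. Set $\alpha=\beta(w_2)\in\Hoh^3(\BSO,\ZZ)$ and take the unique lift $\xi=w_2\in\Hoh^2(\BSO,\ZZ/2)$; then $\pert(\alpha)=2$. Since $\Hoh^2(\BSO,\ZZ)=0$, the ambiguity subgroup $\Hoh^2(\BSO,\ZZ)\smile\alpha$ vanishes, so $\tilde Q(w_2)=Q(w_2)$ lives honestly in $\Hoh^5(\BSO,\ZZ)=\ZZ/2\cdot\beta(w_4)$. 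Theorem~\ref{thm:relations} yields $\langle G(\beta(w_2))\rangle=\langle Q(w_2)\rangle$, and since the ambient group is $\ZZ/2$, the equality of cyclic subgroups forces the equality of elements $G(\beta(w_2))=Q(w_2)$. The proposition therefore reduces to computing $Q(w_2)$.

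Since $n=2$ is even, $Q(w_2)=\beta_4(P_2(w_2))$ by definition. The key ingredient is the mod-$4$ identity
\begin{equation*}
P_2(w_2)\equiv p_1+\iota(w_4)\pmod 4
\end{equation*}
in $\Hoh^4(\BSO,\ZZ/4)$, where $\iota\colon\Hoh^4(\BSO,\ZZ/2)\to\Hoh^4(\BSO,\ZZ/4)$ is induced by the inclusion $\ZZ/2\hookrightarrow\ZZ/4$, $1\mapsto 2$. That $P_2(w_2)-p_1$ lies in the image of $\iota$ is immediate, since both sides reduce modulo~$2$ to $w_2^2$; the identification of the resulting mod-$2$ class as $w_4$ comes out of Brown's integral-coefficient description. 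Applying $\beta_4$, the $p_1$-term dies because $p_1$ is a mod-$4$ reduction of an integer class, while $\beta_4\circ\iota=\beta_2$ by naturality of the connecting map applied to the evident morphism between the short exact sequences $0\to\ZZ\xrightarrow{\times 2}\ZZ\to\ZZ/2\to 0$ and $0\to\ZZ\xrightarrow{\times 4}\ZZ\to\ZZ/4\to 0$. Hence $Q(w_2)=\beta_2(w_4)=\beta(w_4)$, as required.

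The main obstacle is the mod-$4$ identity for $P_2(w_2)$. This should be extracted directly from~\cite{brown}, but the formula is readily sanity-checked by pulling back along $\CC P^\infty\times\CC P^\infty\to\BSO$ classifying the sum of two complex line bundles: writing $a,b$ for the Chern classes, one has $p_1=a^2+b^2$, $\iota(w_4)=2ab\pmod 4$, and $P_2(w_2)=(a+b)^2\pmod 4$, which verifies the identity on this pullback. Combined with a detection argument on the relevant piece of $\Hoh^4(\BSO,\ZZ/4)$ coming from Brown's structural calculation, this yields the identity on $\BSO$ and completes the proof.
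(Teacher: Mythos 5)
Your argument is correct and begins with the same reduction as the paper: Theorem~\ref{thm:relations}, together with the fact that the ambient group $\Hoh^5(\BSO,\ZZ)\iso\ZZ/2$ admits no nontrivial automorphism, lets you pass from ``$G(\beta(w_2))$ and $\tilde Q(w_2)$ generate the same cyclic subgroup'' to the equality $G(\beta(w_2))=Q(w_2)$, leaving only the computation of $Q(w_2)$. Where you diverge is in that computation. The paper works homotopy-theoretically: it compares $\Hoh^5$ across the Postnikov stages $\BSO\to\BSO[4]\to\BSO[3]=K(\ZZ/2,2)$, using the Serre spectral sequence of the principal fibration $K(\ZZ,4)\to\BSO[4]\to K(\ZZ/2,2)$ to see that the universal class $\beta_4(P_2(\iota))$ in $\Hoh^5(K(\ZZ/2,2),\ZZ)\iso\ZZ/4$ must map to $\beta(w_4)$, there being no other source. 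You instead compute in characteristic classes, invoking the Wu-type identity $P_2(w_2)\equiv p_1+\iota(w_4)\pmod 4$ in $\Hoh^4(\BSO,\ZZ/4)$ and then applying $\beta_4$, which kills $p_1$ (an integral reduction) and converts $\beta_4\circ\iota$ to $\beta_2$ on $w_4$. Both routes are legitimate; yours trades the spectral-sequence bookkeeping for a concrete formula that is perhaps more familiar to readers fluent in characteristic classes, at the cost of having to justify that formula. Your justification is essentially right: $P_2(w_2)-p_1$ reduces to zero mod $2$, hence lies in the image of $\iota\colon\Hoh^4(\BSO,\ZZ/2)\to\Hoh^4(\BSO,\ZZ/4)$, and the pullback to $(\CC P^\infty)^2$ pins down the preimage as $w_4$ rather than $0$, $w_2^2$, or $w_2^2+w_4$. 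What you should make explicit, rather than wave at ``Brown's structural calculation,'' is that $\iota$ is injective here (the connecting map $\Hoh^3(\BSO,\ZZ/2)\to\Hoh^4(\BSO,\ZZ/2)$ is $\mathrm{Sq}^1$, and $\mathrm{Sq}^1 w_3=0$), so the preimage is unique and the pullback detection actually closes the argument.
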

\begin{proof}
  
The Bott periodicity theorem asserts that the homotopy groups of $\BSO$ are, in low degrees, given by:
\begin{gather*}
    \pi_2(\BSO)=\ZZ/2,\hspace{2cm}\pi_4(\BSO) \iso\pi_8(\BSO) \iso\ZZ,\\\pi_0(\BSO)=\pi_1(\BSO)=\pi_3(\BSO)=\pi_5(\BSO)=\pi_6(\BSO)=\pi_7(\BSO)=0.
\end{gather*}
In particular, the first stage of the Postnikov tower for $\BSO$ takes the form of an extension
\begin{equation*}
  K(\ZZ, 4) \to \BSO[4] \to \BSO[3] = K(\ZZ/2, 2).
\end{equation*}
The cohomology of $\BSO[4]$ agrees with that of $\BSO$ in the range $H^{\le 7}(\cdot, \ZZ)$, by Bott periodicity, and consequently
$\Hoh^5(\BSO[4], \ZZ) \iso \ZZ/2 \cdot \beta(w_4)$.

On the other hand, $\Hoh^2(\BSO[3], \ZZ/2) \iso \ZZ/2 \cdot \iota $ and $\Hoh^5(\BSO[3], \ZZ) \iso \ZZ/4 \cdot \beta_4( P_2(\iota))$. By
 considering the Serre spectral sequence for the extension $\BSO[4] \to \BSO[3]$, one concludes that the map $\BSO \to \BSO[3]$ induces a map
 $\iota \mapsto w_2$ and $\beta_4(P_2(\iota)) \mapsto \beta(w_4)$, since there is no other possible source of a class in $\Hoh^2$ or
 $\Hoh^5$. By naturality, $\beta_4(P_2(w_2)) = \beta(w_4)$.

Theorem A implies that $G(\beta(w_2))$  and $\beta(w_4)$ agree up to automorphism in the group $\Hoh^5(\BSO, \ZZ) \iso \ZZ/2$, so they coincide.
\end{proof}

\begin{proposition}\label{prop:ind4}
  if $X$ is a complex variety, and $\alpha \in \Br(X)$ is in the image of the Clifford invariant map, $\Cl : I^2(X) \to {}_2 \Br(X)$, then
  $\ord(G(\bar{\alpha})) | 2$. In particular, if $X(\CC)$ has the homotopy-type of a CW complex of dimension no greater than $6$, then
  $\indt(\bar{\alpha})$ divides $4$.
\end{proposition}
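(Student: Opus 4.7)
The plan is to combine the naturality of $G$ asserted in Theorem~\ref{thm:wasA} with the identification $G(\beta(w_2)) = \beta(w_4)$ of the previous proposition. If I can show that $\bar\alpha$ factors as $f^*\beta(w_2)$ for some map $f : X \to \BSO$, then naturality will force $G(\bar\alpha)$ to be the pullback of $\beta(w_4)$, hence to have order dividing $2$. Combined with $\pert(\bar\alpha) \mid 2$, relation \eqref{eq:div2} of Theorem~\ref{thm:wasA} will then yield $\indt(\bar\alpha) \mid 4$ in the low-dimensional case.

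To obtain the factorization, I argue as follows. Since $\alpha$ is in the image of $\Cl$, there is a nondegenerate quadratic bundle $(\mathcal{E}, q)$ on $X$ of even rank $2n$ and trivial discriminant such that $\alpha$ is the Brauer class of the matrix algebra bundle $\Cl(\mathcal{E}, q)$; this is classified by a map $X \to \B\SO_{2n}(\CC)$ followed by the Clifford map $\Cl$ of \eqref{eq:8}. Composing with the stabilization $\B\SO_{2n}(\CC) \to \BSO$ gives a map $f : X \to \BSO$, and $\bar\alpha$ is the pullback along $f$ of the image in $\Hoh^3(\BSO, \ZZ) = \ZZ/2 \cdot \beta(w_2)$ of the universal Brauer class on $\B\PGL_{2^n}(\CC)$. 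The description of the $\pi_2$-map induced by $\Cl$ recorded immediately after \eqref{eq:8}, together with Lemma~\ref{l:PB} (which identifies the universal Brauer class with a Bockstein), shows that this pullback is the nontrivial element $\beta(w_2)$, so $\bar\alpha = f^* \beta(w_2)$.

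With this factorization in hand, and assuming $\bar\alpha$ is nonzero (so that $\pert(\bar\alpha) = 2 = \pert(\beta(w_2))$; the case $\bar\alpha = 0$ is vacuous), the naturality relation \eqref{eq:nat} of Theorem~\ref{thm:wasA} gives
\[
  G(\bar\alpha) \;=\; f^* G(\beta(w_2)) \;=\; f^* \beta(w_4),
\]
invoking the previous proposition. Since $\beta(w_4)$ has order $2$ in $\Hoh^5(\BSO, \ZZ)$, the order of $G(\bar\alpha)$ divides $2$, proving the first assertion. When $X$ has the homotopy type of a CW complex of dimension at most $6$, relation \eqref{eq:div2} of Theorem~\ref{thm:wasA} yields $\indt(\bar\alpha) = \pert(\bar\alpha)\,\ord(G(\bar\alpha)) \mid 2\cdot 2 = 4$.

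The step requiring the most care is the identification of the pullback of the universal Brauer class along the stabilized Clifford map with $\beta(w_2)$: one must check both that the homotopy-class of the stabilized Clifford map is well-defined on cohomology in the relevant range and that the Bockstein description of the Brauer class is compatible with reduction to $\Hoh^3(\BSO, \ZZ)$. Once these are in place, the conclusion follows mechanically from the naturality of $G$ and the computation $G(\beta(w_2)) = \beta(w_4)$.
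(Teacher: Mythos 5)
Your proposal is correct and follows essentially the same route as the paper: factor $\bar\alpha$ through a classifying map $f : X \to \BSO$ for the quadratic bundle underlying the Clifford construction, identify $\bar\alpha = f^*\beta(w_2)$, and then invoke the naturality of $G$ together with the computation $G(\beta(w_2)) = \beta(w_4)$ and Theorem~A. Your extra care with the period-normalization in \eqref{eq:nat} (and noting the $\bar\alpha = 0$ case is vacuous) is a sound touch, but it does not change the argument in substance.
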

\begin{proof}
  By the hypothesis there exists some $2k$-dimensional vector bundle $F$ on $X$, equipped with a nondegenerate quadratic form $q$ of trivial
  discriminant, such that the associated Clifford algebra $\Cl(F,q)$ represents the class
  $\alpha$ in $\Br(X)$.
  
  Topologically, the quadratic bundle $(F,q)$ is represented by a map $X \to \B \SO_{2k}(\CC)$.  The Lie group $\SO_{2k} = \SO_{2k}(\RR)$ is a compact form
  of $\SO_{2k}(\CC)$, and therefore there exists a map $f: X \to \BSO_{2k}$ classifying the pair $(F,q)$. Since the class $[\Cl(F,q)]$ is
  invariant under the addition of a trivial quadratic summand to $(F,q)$, we may replace $f: X \to \BSO_{2k}$ with a map $f: X \to
  \BSO$. Then the class $\bar{\alpha} = [\Cl(F,q)]$ which is the image of the bundle under the Clifford invariant map is $f^*( \beta(w_2))$.

  By naturality $G(\bar{\alpha})$ has order dividing
  $2=\ord(G(\beta(w_2)))$. If $X(\CC)$ has the homotopy type of a CW complex of dimension no greater than $6$, it
  follows from Theorem A that $\indt(\bar{\alpha}) | 4$.
\end{proof}

\newcommand{\GSO}{\mathrm{GSO}}

There exists a group $\mathrm{GSO}$, which fits in an extension of groups
\[ \xymatrix{ 1 \ar[r] & \SO \ar^\phi[r] & \mathrm{GSO} \ar[r] & \Gm \ar[r] & 1. } \] This group has the property that $\B \mathrm{GSO}$
classifies the line-bundle-valued quadratic modules for which the total Clifford invariant
map is defined; this is laid out in~\cite{auel-clifford}.
 By comparing Postnikov towers, one can see that there is a class $w'_2 \in \Hoh^2(\B \GSO, \ZZ/2)$ with the properties
that $\phi^*(w_2') = w_2$ and $\phi^*(Q(w_2')) = Q(w_2)$, where $Q$ is used in the sense of Section \ref{sec:relat-obstr-theory}. There is
an isomorphism
\[\Hoh^3(\B \GSO, \ZZ) = \ZZ/2 \cdot \beta(w_2') \iso \Hoh^3(\B \SO, \ZZ) = \ZZ/2 \cdot \beta(w_2) \]
and it is possible to deduce that $\ord(G(\beta(w_2'))) = \ord (G( \beta(w_2))) = 2$ by comparison. In summary, the result of Proposition
\ref{prop:ind4} holds not only for the Clifford invariant map, but also for the total Clifford invariant map.

\begin{namedtheorem}[D]
  There exists a $5$-dimensional smooth affine complex variety $X$ such that $\Br(X) \iso\ZZ/2$, but the total Clifford invariant map
  vanishes.
\end{namedtheorem}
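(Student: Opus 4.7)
My plan is to exhibit the variety produced by Proposition \ref{p:YXcons} in the specific case $n=2$, $r=4$, so $rn=8$. Call this variety $X = X_{2,8}$. By the proposition it is a smooth affine complex variety of dimension $5$ with $\Hoh^1(X,\ZZ) = \Hoh^2(X,\ZZ) = 0$, and it carries a natural isomorphism
\begin{equation*}
\Br(X) \iso \Brt(X) \iso \ZZ/2 \cdot \alpha_X,
\end{equation*}
with $\ord(G(\alpha_X)) = 4$. The vanishing of $\Hoh^1$ and $\Hoh^2$ takes care of the hypotheses needed to identify $\Br(X)$ with $\Brt(X)$ via Lemma \ref{l:BrIso} and also ensures that $\Pic(X) = 0$, so that the subtleties distinguishing the Clifford invariant map from the total Clifford invariant map evaporate.

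Next I would combine this with the topological index calculation afforded by Theorem A. Since $X$ has the homotopy type of a $5$-dimensional CW complex (in particular, of dimension at most $6$), Theorem A gives
\begin{equation*}
\indt(\bar\alpha_X) = \pert(\bar\alpha_X) \cdot \ord(G(\bar\alpha_X)) = 2 \cdot 4 = 8.
\end{equation*}
On the other hand, Proposition \ref{prop:ind4}, together with the extension of its conclusion to the total Clifford invariant map via the class $w_2' \in \Hoh^2(\B\GSO, \ZZ/2)$ discussed immediately before the statement of Theorem D, asserts that any class $\alpha \in \Br(X)$ in the image of $\Cl_\tot : I^2_\tot(X) \to {_2\Br(X)}$ satisfies $\ord(G(\bar\alpha)) \mid 2$, and hence (by Theorem A again) $\indt(\bar\alpha) \mid 4$.

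Putting these two facts together, the nontrivial class $\alpha_X \in \Br(X) = \ZZ/2$ cannot lie in the image of $\Cl_\tot$, because its topological index is $8$, not a divisor of $4$. Since the trivial class is the only other element of $\Br(X)$, the image of $\Cl_\tot$ must be trivial, proving Theorem D. The entire argument is essentially a bookkeeping exercise once Proposition \ref{p:YXcons} and Proposition \ref{prop:ind4} (with its $\GSO$-extension) are in hand; the only potential obstacle is verifying that the $\GSO$ extension of Proposition \ref{prop:ind4} really does apply to the class $\alpha_X$ on our chosen $X$, but this is automatic because the natural map $\Pic(X) \to \Hoh^2(X,\ZZ)$ is trivially surjective ($\Hoh^2(X,\ZZ)=0$), so line-bundle-valued quadratic forms on $X$ contribute nothing beyond ordinary ones.
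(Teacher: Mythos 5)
Your outline is essentially the paper's argument: take $X = X_{2,8}$ from Proposition \ref{p:YXcons}, observe $\Br(X) = \ZZ/2\cdot\alpha$ with $\ord(G(\bar\alpha)) = 4$ (equivalently $\indt(\bar\alpha)=8$), invoke Proposition \ref{prop:ind4} and its $\GSO$ extension to bar any such class from the image of the total Clifford invariant. However, there is a genuine gap in how you dispose of the Picard group.

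You assert that $\Hoh^1(X,\ZZ) = \Hoh^2(X,\ZZ) = 0$ ``also ensures that $\Pic(X) = 0$.'' This does not follow for a smooth affine complex variety: the cycle class map $\Pic(X) \to \Hoh^2(X(\CC),\ZZ)$ is in general neither injective nor a good detector of $\Pic$. The standard counterexample is a smooth affine curve $C$ obtained by puncturing a genus-$g$ projective curve; here $\Hoh^2(C(\CC),\ZZ)=0$ yet $\Pic(C)$ is a $g$-dimensional abelian variety's worth of classes. The paper devotes a full paragraph to establishing $\Pic(X)=0$ for $X_{2,8}$: it uses the Kummer sequence and the vanishing $\Hoh^1_{\et}(X,\mu_n)=0$ to show $\Pic(X)$ is torsion-free and divisible, then compactifies $X$ inside a smooth projective $\tilde X$ with $\tilde X \setminus X$ a normal-crossing divisor and argues that $\Pic^0(\tilde X)\neq 0$ would force arbitrary $p$-torsion into $\Pic(\tilde X)$, and hence (modulo a finitely generated group) into $\Pic(X)$, contradicting torsion-freeness. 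Only then does one conclude $\Pic(X)=0$, so that the total and ordinary Clifford invariant maps coincide. Your closing remark about ``$\Pic(X) \to \Hoh^2(X,\ZZ)$ being trivially surjective'' conflates two separate uses of the Picard group: surjectivity of that map is what feeds Lemma \ref{l:BrIso} to get $\Br \cong \Brt$, while it is the vanishing $\Pic(X)=0$ that collapses the total Clifford invariant to the ordinary one, and the latter requires the argument just sketched. Supply the $\Pic(X)=0$ argument and the rest of your proof goes through as written.
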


\begin{proof}
    Let $X$ be the smooth affine complex $5$-fold $X_{2,8}$ of Proposition \ref{p:YXcons}, equipped with a class $\alpha$ of topological
    period $2$ and topological index $8$. This variety satisfies $\Br(X) = \ZZ/2 \cdot \alpha$ and $\Hoh^1(X, \ZZ) = \Hoh^2(X, \ZZ) =
    0$. We show first that $\Pic(X) = 0$, so that the total Clifford invariant reduces to the ordinary Clifford invariant, and then we shall
    show that the ordinary Clifford invariant map is trivial.

    For any integer $n \ge 2$, there exists an exact sequence of \'etale cohomology groups,
    \[ \xymatrix{ 0 = \Hoh_\et^1(X, \mu_n) \ar[r] & \Pic(X) \ar^{\times n}[r] & \Pic(X) \ar[r] & \Hoh_\et^2(X, \mu_n).} \]
    It follows that $\Pic(X)$ is torsion-free, and since $\Hoh_{\et}^2(X,\mu_n)=0$ if
    $n\geq 3$ is odd, $\Pic(X)$ is $n$-divisible for every such $n$. When $n=2$, we have an
    isomorphism $\Hoh^2_{\et}(X,\mu_2)=\Br(X)$, so that $\Pic(X)$ is $2$-divisible as well.
    To show $\Pic(X) =0$ it will suffice to show that it is finitely generated.

    By compactifying and then resolving singularities, we may embed $X$ as an open subvariety of a smooth projective variety $X \subset \tilde
    X$ where $\tilde X \setminus X$ is a normal-crossing divisor, and from this we may present $\Pic(X)$ as a quotient of $\Pic(\tilde
    X)$ by the finitely-generated abelian group generated by the classes of the irreducible components of $\tilde X \setminus X$. It will be enough to show that $\Pic(\tilde
    X)$ is finitely generated, and since $\tilde X$ is a projective variety, this is equivalent to proving that $\Pic^0(\tilde X) = 0$. If
    $\Pic^0(\tilde X)$ is not $0$, it is a nontrivial abelian variety and is of the form $\CC^r/ \Lambda$ where $\Lambda$ is a lattice. There is no bound on the
    primes $p$ for which nontrivial $p$-torsion appears in $\Pic^0(\tilde X)$, and consequently no bound on the primes for which nontrivial
    $p$-torsion appears in $\Pic(\tilde X)$ and $\Pic(X)$, contradicting the lack of torsion of $\Pic(X)$. It follows that $\Pic(X) = 0$,
    and so the total and ordinary Clifford invariant maps on $X$ agree.

    The class $\alpha$ satisfies $\ord(G(\bar{\alpha})) = 4$ and so Proposition \ref{prop:ind4} implies that $\alpha$ is not in the image of the
    Clifford invariant map. Because ${}_2\Br(X) = \Br(X) = \ZZ/2 \cdot \alpha$, the image of the Clifford map is $0$.
\end{proof}

Auel's question at the end of \cite{auel} is posed in greater generality than the case of complex varieties; restricted to that
case, it asks whether the total Clifford invariant map is surjective for smooth complex
varieties of dimension $d \le 3$. If we were
able to find a smooth projective $3$-fold, $X$, meeting the conditions of Theorem C,  with a class $\alpha \in \Br(X)$ such that
$\per(\alpha) =2$ and $8 |\indt(\overline{\alpha}) $, then the results of this section would imply a
negative answer to Auel's question. Indeed, by the comment after
Proposition~\ref{prop:ind4}, the topological index of
any class in the image of the total Clifford invariant map is at most $4$ on a complex $3$-fold.

\begin{bibdiv}
\begin{biblist}

% \bib{a1}{article}{
%    author={Antieau, Benjamin},
%    title={Cohomological obstruction theory for Brauer classes and the
%    period--index problem},
%    journal={J. K-Theory},
%    volume={8},
%    date={2011},
%    number={3},
%    pages={419--435},
%    issn={1865-2433},
% %    review={\MR{2863419}},
% %    doi={10.1017/is010011030jkt136},
% }

\bib{aw1}{article}{
    author = {Antieau, Benjamin},
    author = {Williams, Ben},
    title = {The period--index problem for twisted topological K-theory},
    journal = {ArXiv e-prints},
    eprint = {http://arxiv.org/abs/1104.4654},
    year = {2011},
}

\bib{aw2}{article}{
    author = {Antieau, Benjamin},
    author = {Williams, Ben},
    title = {Serre-Godeaux varieties and the \'etale index},
    journal = {J. K-Theory},
    year = {2013},
    volume = {11},
    number = {2},
    pages={283--295},
}

% \bib{aw3}{article}{
%     author = {Antieau, Benjamin},
%     author = {Williams, Ben},
%     title = {The topological period--index problem for $6$-complexes},
%     year = {2012},
%     eprint = {http://arxiv.org/abs/1208.4430},
%     journal = {ArXiv e-prints},
% }

\bib{aw4}{article}{
    author = {Antieau, Benjamin},
    author = {Williams, Ben},
    title = {Unramified division algebras do not always contain Azumaya maximal orders},
    journal = {Inv. Math.},
    eprint = {http://dx.doi.org/doi:10.1007/s00222-013-0479-7},
    year = {2013},
}

\bib{aw5}{article}{
    author = {Antieau, Benjamin},
    author = {Williams, Ben},
    title = {On the classification of principal $\PU_2$-bundles over a $6$-complex},
    journal = {ArXiv e-prints},
    eprint = {http://arxiv.org/abs/1209.2219},
    year = {2012},
}
\bib{atiyah-segal}{article}{
    author={Atiyah, Michael},
    author={Segal, Graeme},
    title={Twisted $K$-theory},
    journal={Ukr. Mat. Visn.},
    volume={1},
    date={2004},
    number={3},
    pages={287--330},
    issn={1810-3200},
    translation={
    journal={Ukr. Math. Bull.},
    volume={1},
    date={2004},
    number={3},
    pages={291--334},
    issn={1812-3309},
    },
%     review={\MR{2172633 (2006m:55017)}},
}

\bib{atiyah-segal-cohomology}{article}{
    author={Atiyah, Michael},
    author={Segal, Graeme},
    title={Twisted $K$-theory and cohomology},
    conference={
    title={Inspired by S. S. Chern},
    },
    book={
    series={Nankai Tracts Math.},
    volume={11},
    publisher={World Sci. Publ.,
    Hackensack, NJ},
    },
    date={2006},
    pages={5--43},
%     review={\MR{2307274 (2008j:55003)}},
%     doi={10.1142/9789812772688_0002},
}

\bib{auel-clifford}{article}{
    author = {Auel, Asher},
    title = {Clifford invariants of line-bundle valued quadratic forms},
    journal = {MPIM e-prints},
    eprint = {http://www.mpim-bonn.mpg.de/preblob/5091},
    year = {2011},
}

\bib{auel}{article}{
    author = {Auel, Asher},
    title = {Surjectivity of the total Clifford invariant and Brauer dimension},
    journal = {ArXiv e-prints},
    eprint = {http://arxiv.org/abs/1108.5728},
    year = {2011},
}

\bib{baum-browder}{article}{
	title = {The cohomology of quotients of classical groups},
	volume = {3},
	journal = {Topology},
	author = {Baum, Paul F.},
    author={Browder, William},
	year = {1965},
	pages = {305--336}
}

\bib{bott}{article}{
    author={Bott, Raoul},
    title={The space of loops on a Lie group},
    journal={Michigan Math. J.},
    volume={5},
    date={1958},
    pages={35--61},
    issn={0026-2285},
}

\bib{brown}{article}{
    author={Brown, Edgar H., Jr.},
    title={The cohomology of $B{\rm SO}_{n}$ and $B{\rm O}_{n}$ with
    integer coefficients},
    journal={Proc. Amer. Math. Soc.},
    volume={85},
    date={1982},
    number={2},
    pages={283--288},
    issn={0002-9939},
%     review={\MR{652459 (83d:55015)}},
%     doi={10.2307/2044298},
}

\bib{cartan}{article}{
    author = {Cartan, H.},
    title = {D{\'e}termination des alg{\`e}bres {$\Hoh_*(\pi,n;\ZZ)$}},
    journal = {S{\'e}minaire H. Cartan},
    volume = {7},
    number = {1},
    pages = {11-01--11-24},
    publisher = {Secr{\'e}tariat math{\'e}matique},
    address = {Paris},
    year = {1954/1955},
}

\bib{colliot}{article}{
    author={Colliot-Th{\'e}l{\`e}ne, Jean-Louis},
    title={Exposant et indice d'alg\`ebres simples centrales non ramifi\'ees},
    note={With an appendix by Ofer Gabber},
    journal={Enseign. Math. (2)},
    volume={48},
    date={2002},
    number={1-2},
    pages={127--146},
    issn={0013-8584},
}

\bib{dejong}{article}{
    author={de Jong, A. J.},
    title={The period--index problem for the Brauer group of an algebraic
    surface},
    journal={Duke Math. J.},
    volume={123},
    date={2004},
    number={1},
    pages={71--94},
    issn={0012-7094},
%     review={\MR{2060023 (2005e:14025)}},
%     doi={10.1215/S0012-7094-04-12313-9},
}

\bib{dejong-starr}{article}{
    author={Starr, Jason},
    author={de Jong, Johan},
    title={Almost proper GIT-stacks and discriminant avoidance},
    journal={Doc. Math.},
    volume={15},
    date={2010},
    pages={957--972},
    issn={1431-0635},
%     review={\MR{2745688 (2012e:14073)}},
}

\bib{ekedahl}{article}{
  author = {{Ekedahl}, T.},
  title = {Approximating classifying spaces by smooth projective varieties},
  journal = {ArXiv e-prints},
  eprint = {http://arxiv.org/abs/0905.1538},
}

\bib{goresky-macpherson}{book}{
    author={Goresky, Mark},
    author={MacPherson, Robert},
    title={Stratified Morse theory},
    series={Ergebnisse der Mathematik und ihrer Grenzgebiete (3)},
    volume={14},
    publisher={Springer-Verlag},
    place={Berlin},
    date={1988},
    pages={xiv+272},
    isbn={3-540-17300-5},
%     review={\MR{932724 (90d:57039)}},
}

\bib{grothendieck-brauer}{article}{
    author={Grothendieck, Alexander},
    title={Le groupe de Brauer. I. Alg\`ebres d'Azumaya et interpr\'etations diverses},
    conference={
    title={S\'eminaire Bourbaki, Vol.\ 9},
    },
    book={
    publisher={Soc. Math. France},
    place={Paris},
    },
    date={1995},
    pages={Exp.\ No.\ 290, 199--219},
}

% \bib{husemoller}{book}{
%     author={Husemoller, Dale},
%     title={Fibre bundles},
%     edition={2},
%     note={Graduate Texts in Mathematics, No. 20},
%     publisher={Springer-Verlag},
%     place={New York},
%     date={1975},
%     pages={xv+327},
% }

\bib{jouanolou}{article}{
    author={Jouanolou, J. P.},
    title={Une suite exacte de Mayer-Vietoris en $K$-th\'eorie alg\'ebrique},
    conference={
    title={Algebraic $K$-theory, I: Higher $K$-theories (Proc. Conf.,
    Battelle Memorial Inst., Seattle, Wash., 1972)},
    },
    book={
    publisher={Springer},
    place={Berlin},
    },
    date={1973},
    pages={293--316. Lecture
    Notes in Math., Vol.
    341},
%     review={\MR{0409476 (53 \#13231)}},
}

\bib{kameko-yagita}{article}{
    author={Kameko, Masaki},
    author={Yagita, Nobuaki},
    title={The Brown-Peterson cohomology of the classifying spaces of the
    projective unitary groups ${\rm PU}(p)$ and exceptional Lie groups},
    journal={Trans. Amer. Math. Soc.},
    volume={360},
    date={2008},
    number={5},
    pages={2265--2284},
    issn={0002-9947},
}

\bib{lawson-michelsohn-book}{book}{
    author={Lawson, H. Blaine, Jr.},
    author={Michelsohn, Marie-Louise},
    title={Spin geometry},
    series={Princeton Mathematical Series},
    volume={38},
    publisher={Princeton University Press},
    place={Princeton, NJ},
    date={1989},
    pages={xii+427},
    isbn={0-691-08542-0},
%     review={\MR{1031992 (91g:53001)}},
}

\bib{lieblich}{article}{
    author={Lieblich, Max},
    title={Twisted sheaves and the period--index problem},
    journal={Compos. Math.},
    volume={144},
    date={2008},
    number={1},
    pages={1--31},
    issn={0010-437X},
%     review={\MR{2388554 (2009b:14033)}},
%     doi={10.1112/S0010437X07003144},
}

\bib{matzri}{unpublished}{
    author={Matzri, Eliyahu},
    title={Symbol length in the Brauer group of a $C_m$ field},
    year={2013},
}

% \bib{mccleary}{book}{
%     author={McCleary, John},
%     title={A user's guide to spectral sequences},
%     series={Cambridge Studies in Advanced Mathematics},
%     volume={58},
%     edition={2},
%     publisher={Cambridge University Press},
%     place={Cambridge},
%     date={2001},
%     pages={xvi+561},
%     isbn={0-521-56759-9},
% }

\bib{merkurjev}{article}{
    author={Merkurjev, A. S.},
    title={On the norm residue symbol of degree $2$},
    journal={Dokl. Akad. Nauk},
    volume={261},
    date={1981},
    number={3},
    pages={542--547},
    issn={0002-3264},
%     review={\MR{638926 (83h:12015)}},
}

\bib{milnor-stasheff}{book}{
    author={Milnor, John W.},
    author={Stasheff, James D.},
    title={Characteristic classes},
    note={Annals of Mathematics Studies, No. 76},
    publisher={Princeton University Press},
    place={Princeton, N. J.},
    date={1974},
    pages={vii+331},
}

\bib{parimala-sridharan-1}{article}{
    author={Parimala, R.},
    author={Sridharan, R.},
    title={Graded Witt ring and unramified cohomology},
    journal={$K$-Theory},
    volume={6},
    date={1992},
    number={1},
    pages={29--44},
    issn={0920-3036},
%     review={\MR{1186772 (94f:14015)}},
%     doi={10.1007/BF00961333},
}

\bib{parimala-sridharan-2}{article}{
    author={Parimala, R.},
    author={Sridharan, R.},
    title={Nonsurjectivity of the Clifford invariant map},
    note={K. G. Ramanathan memorial issue},
    journal={Proc. Indian Acad. Sci. Math. Sci.},
    volume={104},
    date={1994},
    number={1},
    pages={49--56},
    issn={0253-4142},
%     review={\MR{1280057 (95g:11030)}},
%     doi={10.1007/BF02830873},
}

% \bib{puttmann-rigas}{article}{
%     author={P{\"u}ttmann, Thomas},
%     author={Rigas, A.},
%     title={Presentations of the first homotopy groups of the unitary groups},
%     journal={Comment. Math. Helv.},
%     volume={78},
%     date={2003},
%     number={3},
%     pages={648--662},
%     issn={0010-2571},
% }

% \bib{thom}{article}{
%     author={Thom, Ren{\'e}},
%     title={Quelques propri\'et\'es globales des vari\'et\'es
%     diff\'erentiables},
%     journal={Comment. Math. Helv.},
%     volume={28},
%     date={1954},
%     pages={17--86},
% }

\bib{totaro}{article}{
    author={Totaro, Burt},
    title={The Chow ring of a classifying space},
    conference={
    title={Algebraic $K$-theory},
    address={Seattle, WA},
    date={1997},
    },
    book={
    series={Proc. Sympos. Pure Math.},
    volume={67},
    publisher={Amer. Math. Soc.},
    place={Providence, RI},
    },
    date={1999},
    pages={249--281},
%     review={\MR{1743244 (2001f:14011)}},
}

\bib{vistoli}{article}{
    author={Vistoli, Angelo},
    title={On the cohomology and the Chow ring of the classifying space of ${\rm PGL}_p$},
    journal={J. Reine Angew. Math.},
    volume={610},
    date={2007},
    pages={181--227},
    issn={0075-4102},
}

\bib{woodward}{article}{
    author={Woodward, L. M.},
    title={The classification of principal ${\rm PU}_{n}$-bundles over a $4$-complex},
    journal={J. London Math. Soc. (2)},
    volume={25},
    date={1982},
    number={3},
    pages={513--524},
    issn={0024-6107},
}

\end{biblist}
\end{bibdiv}

\end{document}